\definecolor{untgreen}{RGB}{5,144,51} 
\numberwithin{equation}{subsection} 
\crefname{equation}{}{} 
\crefname{subsection}{}{} 
\crefname{section}{\S\kern -.5 ex}{\S\S\kern -1 ex}
\theoremstyle{plain}
\newtheorem{claim*}[subsection]{Claim}
\newtheorem{corollary}[subsection]{Corollary} 
\newtheorem{lemma}[subsection]{Lemma}
\newtheorem{prop}[subsection]{Proposition}
\newtheorem{theorem}[subsection]{Theorem} 
\newtheorem*{theorem*}{Theorem}
\theoremstyle{definition}%
\theoremstyle{remark}
\newcommand{\bc}{\begin{center}}
\newcommand{\ec}{\end{center}}
\newcommand{\CC}{\ensuremath{\mathbb C}}
\newcommand\Ind{\operatorname{Ind}}
\newcommand \abs[1] {\left | #1 \right |} 
\newcommand \ccl{\operatorname{ccl}}
\newcommand \la{\langle} 
\newcommand \ra{\rangle} 
\newcommand\spn{\operatorname{span}}
\newcommand\D{\operatorname{D}}
\newcommand\Stab{\operatorname{Stab}}
\newcommand \xlp {x_{|\lambda|}}
\newcommand \elt {\widetilde e_\lambda}
\newcommand\id{\operatorname{id}}
\newcommand{\ds}{\operatorname{ds}}
\newcommand\inverse{^{-1}}
\newcommand\CalC{\mathcal C}
\newcommand\CalP{\mathcal P}
\newcommand\CalS{\mathcal S}
\newcommand\cfC{\operatorname{cf}_{\mathbb C}}
\begin{document}

\title[A decomposition of the group algebra]{A decomposition of the group
  algebra of a hyperoctahedral group}

\author{J.~Matthew Douglass}
\address{Department of Mathematics\\
University of North Texas\\ 
Denton, TX 76203, USA}
\thanks{This work was partially supported by a grant from the Simons
  Foundation (Grant \#245399 to J.M.~Douglass). J.M.~Douglass would like to
  acknowledge that some of this material is based upon work supported by
  (while serving at) the National Science Foundation.}
\email{douglass@unt.edu}

\author{Drew E. Tomlin}
\address{Department of Mathematics\\
  University of North Texas\\
  Denton, TX 76203, USA}
\email{drewtillis@my.unt.edu}

\subjclass[2010]{Primary 20F55, Secondary 5E10} \keywords{Idempotents,
  descent algebra, hyperoctahedral group, Mantaci-Reutenauer algebra}

\begin{abstract}
  The descent algebra of a finite Coxeter group $W$ is a subalgebra of the
  group algebra defined by Solomon. Descent algebras of symmetric groups
  have properties that are not shared by other Coxeter groups. For instance,
  the natural map from the descent algebra of a symmetric group to its
  character ring is a surjection with kernel equal the Jacobson
  radical. Thus, the descent algebra implicitly encodes information about
  the representations of the symmetric group, and a complete set of
  primitive idempotents in the character ring leads to a decomposition of
  the group algebra into a sum of right ideals indexed by
  partitions. Stanley asked whether this decomposition of the regular
  representation of a symmetric group could be realized as a sum of
  representations induced from linear characters of centralizers. This
  question was answered positively by Bergeron, Bergeron, and Garsia, using
  a connection with the free Lie algebra on $n$ letters, and independently by
  Douglass, Pfeiffer, and R\"ohrle, who connected the decomposition with the
  configuration space of $n$-tuples of distinct complex numbers.

  The Mantaci-Reutenauer algebra of a hyperoctahedral group is a subalgebra
  of the group algebra that contains the descent algebra.  Bonnaf\'e and
  Hohlweg showed that the natural map from the Mantaci-Reutenauer algebra to
  the character ring is a surjection with kernel equal the Jacobson
  radical. In 2008, Bonnaf\'e asked whether the analog to Stanley's question
  about the decomposition of the group algebra into a sum of induced linear
  characters holds. In this paper, we give a positive answer to Bonnaf\'e's
  question by explicitly constructing the required linear characters.
\end{abstract}

\maketitle


\section{Introduction}\label{sec:intro}

Let $W$ be a Coxeter group with a generating set of simple reflections
$S$. For a subset $I$ of $S$, $W_I = \langle I \rangle$ is the standard
parabolic subgroup of $W$ generated by $I$. Then $W_I$ has a set of minimal
length coset representatives $X_I$, where length is the Coxeter length. Set
$x_I = \sum_{w\in X_I} w$. Then $\{\, x_I \mid I\subseteq S \,\}$ forms a
basis for a subalgebra of the group algebra $\mathbb{C} W$ called the
\emph{descent algebra} of $W$ (see \cite{solomon:mackey}).

When $W=S_n$ is a symmetric group, the descent algebra has two particularly
nice properties. First, the natural map from the descent algebra of $S_n$ to
the character ring of $S_n$ is a surjection with kernel equal to the
Jacobson radical, and second, a complete set of orthogonal idempotents of
the descent algebra determines a decomposition of the group algebra of $S_n$
as a direct sum of induced representations of linear characters of
centralizers. This decomposition of the group algebra has applications in
surprisingly different contexts; see
\cite{bergeronbergerongarsia:idempotents} for connections with the free Lie
algebra, \cite{hanlon:action} for an application to Hochschild homology, and
\cite{douglasspfeifferroehrle:cohomology} and
\cite{churchfarb:representation} for connections with the cohomology of the
configuration space of $n$ distinct points in the plane. The first property,
that the natural map from the descent algebra to the character ring is a
surjection, only holds for symmetric groups. It is an open problem (the
Lehrer-Solomon conjecture, see
\cite{bishopdouglasspfeifferroehrle:computationsIII}) to determine whether
the second property, that a complete set of orthogonal idempotents of the
descent algebra determines a decomposition of the group algebra as a direct
sum of induced representations of linear characters of centralizers, holds
for a general Coxeter group $W$.

In this paper we consider hyperoctahedral groups and extensions of their
descent algebras known as \emph{Mantaci-Reutenauer algebras.} The
Mantaci-Reutenauer algebra of a hyperoctahedral group $W_n$ has the property
that the natural map to the character ring of $W_n$ is a surjection with
kernel equal to the Jacobson radical \cite{bonnafehohlweg:generalized}. The
main result in this paper is that a complete set of orthogonal idempotents
of the Mantaci-Reutenauer algebra of $W_n$ determines a decomposition of the
group algebra of $W_n$ as a direct sum of induced representations of linear
characters of centralizers. This answers a question raised by Bonnaf\'e
\cite[\S10]{bonnafe:mantaci}. It would be interesting to find a connection
between the results in this paper and the corresponding conjectural result
\cite{bishopdouglasspfeifferroehrle:computationsIII} for the descent algebra
of $W_n$.

The idempotents in the Mantaci-Reutenauer algebra that play a central role
in this paper were constructed by Vazirani \cite{vazirani:thesis},
generalizing a construction given by Garsia and Reutenauer
\cite{garsiareutenauer:decomposition} for symmetric
groups. Mantaci-Reutenauer algebras are defined for more general wreath
products, and it seems likely that Vazirani's construction, as well as the
results in this paper, can be extended to the complex reflection groups
$G(r,1,n)$ for $r>2$.

The rest of this paper is organized as follows. In the next section we set
out some notation and state the main theorems. Proofs of the main results
are then completed in subsequent sections.


\section{A decomposition of \texorpdfstring{$\CC
    W_n$}{CCWn}} \label{sec:defandnot}

To begin, we need some notation. First, throughout this paper, $n$ is a
positive integer; for a positive integer $k$, set $[k]=\{1, \dots, k\}$; for
convenience the additive inverse of an integer will frequently be denoted by
an overbar (so for example $\overline 3 = -3$ and $\overline{-3}= 3$); and
for $J\subseteq \mathbb Z$, the notation $J=\{j_1< j_2< \dotsm < j_k\}$
indicates that $J=\{j_1, j_2, \dots, j_k\}$ and $j_1< j_2< \dotsm <j_k$.

\subsection{Compositions and partitions} \label{subsec:compandpart} 

A \emph{composition of $n$} is a tuple of positive integers $p=( p_1,
\ldots, p_k)$ such that $\sum_i p_i = n$. A \emph{partition of $n$} is a
composition in which the entries of the tuple are nonincreasing. More
generally, a \emph{signed composition of $n$} is a tuple of nonzero integers
$p=( p_1, \ldots, p_k )$ such that $\sum_i \abs{p_i} = n$. A \emph{signed
  partition of $n$} is a signed composition in which the positive entries of
the tuple appear first, in nonincreasing order, followed by the negative
entries, in order of nonincreasing absolute value. The entries of a signed
composition $p$ are called \emph{parts} of $p$. Define
\begin{itemize}
\item $\CalC(n)$ to be the set of compositions of $n$,
\item $\CalP(n)$ to be the set of partitions of $n$,
\item $\CalS\CalC(n)$ to be the set of signed compositions of $n$, and
\item $\CalS\CalP(n)$ to be the set of signed partitions of $n$.
\end{itemize}

For $p=(p_1, p_2, \ldots, p_k)\in \CalS\CalC(n)$ set $\widehat p_0 = 0$, and
for $i\in [k]$ define
\[
\widehat p_i = \sum_{j=1}^i \abs{p_j} \quad \text{and} \quad P_i =
\{\,\widehat p_{i-1} +l\mid l=1, \dots, \abs{p_i} \,\} = \{\widehat p_{i-1}
+1, \ldots, \widehat p_i\}.
\]
The subsets $P_i$ of $[n]$ will be referred to as ``blocks'' of $p$.

Signed partitions will frequently be written as
\[
\lambda= \left( \lambda_1, \ldots, \lambda_a, \lambda_{a+1}, \ldots,
  \lambda_{a+b} \right)
\]
to indicate that $\lambda_1, \ldots, \lambda_a $ are the positive parts of
$\lambda$ and $\lambda_{a+1}, \ldots, \lambda_{a+b}$ are the negative
parts. With the conventions above,
\[
\widehat \lambda_i = \sum_{j=1}^i \abs{\lambda_j} \quad\text{and}\quad
\Lambda_i = \{\widehat \lambda_{i-1} +1, \widehat \lambda_{i-1} +2, \ldots,
\widehat \lambda_i \}.
\]

For $p\in \CalS\CalC(n)$ let $\overleftarrow p$ be the signed partition of
$n$ formed by rearranging the parts of $p$.

Suppose $k$ is a positive integer and consider the set of signed
compositions of $n$ with $k$ parts. The symmetric group $S_{k}$ acts on this
set by permuting the parts of a signed composition, and the set of signed
partitions of $n$ with $k$ parts forms a set of orbit representatives for
this action.  If $p$ has $k$ parts, let $\Stab(p)$ be the stabilizer of $p$
in $S_{k}$.

For example, $p=( \overline{1}, 3, \overline{2}, 1, 3, \overline{1})$ is a
signed composition of eleven with six parts, $\overleftarrow p = \left( 3,
  3, 1, \overline{2}, \overline{1}, \overline{1} \right)$, and $\Stab(p)$ is
isomorphic to the Klein four group.

If a signed composition $p$ is fixed, then $\xi_i$ will denote the sign of
$p_i$, where the sign of a positive number is $+$ and the sign of a negative
number is $-$.


\subsection{Hyperoctahedral groups} \label{subsec:hypgp}

A \emph{signed permutation of $n$} is a permutation $w$ of the set $\{1, 2,
\ldots, n\} \amalg \{\overline{1}, \ldots, \overline{n}\}$ such that $w
(\overline{a} )= \overline{ w(a)}$ for $a$ in $[n]$. Signed permutations of
$n$ naturally form a group under composition, called the
\emph{$n^{\text{th}}$ hyperoctahedral group} and denoted by $W_n$.

We identify $S_n$ with the subgroup of $W_n$ consisting of all signed
permutations $w$ such that $w([n])=[n]$. For a subset $P$ of $[n]$, let
$S_P$ and $W_P$ denote the subgroups of $S_n$ and $W_n$, respectively, that
fix $[n]\setminus P$ pointwise. Then $S_P= S_n \cap W_P \subseteq
W_n$. Similarly, for an integer $m\leq n$, we identify $S_m$ with the
subgroup $S_{[m]}$ of $S_n$, and $W_{m}$ with the subgroup $W_{[m]}$.

In this paper, $W$, $S$, and several other symbols can have four types of
subscripts: a positive integer, usually $m$, $n$, or $|\lambda_i|$; a
(signed) composition, usually $p$, $|p|$, or $|\lambda|$; a (signed)
partition, usually $\lambda$; or a subset of $[n]$, usually $P_i$ or
$\Lambda_i$. The meaning should always be clear from context.

Signed permutations may be represented in two row (or function) notation,
one row notation, or cycle notation. The conventions we will use in this
paper are most clearly demonstrated with an example. In two row, one row,
and cycle notation, respectively,
\[
w= \left( \begin{array}{cccccc}
            1 & 2 & 3 & 4 & 5 & 6  \\
            2 & 3 & \overline{1} & 4 & \overline{6} & \overline{5}
          \end{array} \right) = 2\, 3\,  \overline{1}\, 4\, \overline{6}\,
        \overline{5} = \left( 1 \; \; 2 \; \; 3 \right)^- (4) ( 5\:\:
        \overline{6})
\]
is the signed permutation that maps $1$ to $2$, $2$ to $3$, $3$ to $-1$, and
so on. Here the superscript ${}^-$ in cycle notation denotes a negative
cycle.  Given $a_1, a_2, \dots, a_r$, $\left( a_1 \; \; a_2 \; \; \cdots \;
  \; a_r \right)^-$ is called a \emph{negative $r$-cycle} and denotes the
signed permutation that maps $a_j$ to $a_{j+1}$ for $j\in [r-1]$ and maps
$a_r$ to $\overline{a_1} =-a_1$. Note that a negative $r$-cycle has order
$2r$ as an element of $W_n$.

Each signed permutation $w$ has a \emph{signed cycle type} that is the
signed partition $\lambda$ for which the positive parts of $\lambda$ are the
lengths of the positive cycles in the cycle decomposition of $w$, and the
negative parts of $\lambda$ are the lengths of the negative cycles in the
cycle decomposition of $w$. For example, the signed cycle type of $w= \left(
  1 \; \; 2 \; \; 3 \right)^- (4) \left( 5 \; \; \overline{6} \right)$ is
the signed partition $( 2, 1, \overline{3})$. Note that two signed
permutations in $W_n$ are conjugate if and only if they have the same signed
cycle type.

For a positive integer $i$, let $s_i$ be the positive two-cycle $(i \;\;
i+1)$ that switches $i$ and $i+1$, and let $t_i$ be the negative one-cycle
$(\, i\,)^-$ that sends $i$ to $\overline{\imath}$. Then $\{t_1, s_1, s_2,
\ldots, s_{n-1}\}$ is a set of Coxeter generators of $W_n$.

Finally, let $w_{0,n}$ be the ``longest element'' in $W_n$, so 
\[
w_{0,n}= t_1 \dotsm t_n\quad\text{and} \quad w_{0,n}(a) = \overline{a}
\]
for $a\in [n]$. Note that $\langle w_{0,n}\rangle$ is the center of
$W_n$. Similarly, for $P\subseteq [n]$ define $w_{0,P}= \prod_{j\in P}
t_{j}$ in $W_P$.

\subsection{Mantaci-Reutenauer algebras}

Mantaci-Reutenauer algebras for hyperoctahedral groups were first defined by
Mantaci and Reutenauer
\cite{mantacireutenauer:generalization}. Subsequently, Bonnaf\'e and Hohlweg
\cite{bonnafehohlweg:generalized} gave a construction in the spirit of
Solomon's construction of descent algebras described above. Theirs is the
description of Mantaci-Reutenauer algebras used in this paper.

For a signed composition $p=(p_1, p_2, \ldots, p_k)$ of $n$ with blocks
$P_1$, \dots, $P_k$, define $W_p$ to be the subset of $W_n$ consisting of
all signed permutations $w$ such that
\[
w(P_i) \subseteq \pm P_i \ \forall \,i\in [k]\quad\text{and}\quad w(P_i)
\subseteq P_i \text{ if } p_i <0.
\]

For example, if $\lambda=(\lambda_1, \dots, \lambda_a, \lambda_{a+1}, \dots,
\lambda_{a+b})\in \CalS\CalP(n)$, then
\[
W_\lambda =W_{\Lambda_1} \dotsm W_{\Lambda_a} S_{\Lambda_{a+1}} \dotsm
S_{\Lambda_{a+b}} \cong W_{\lambda_1} \times \dotsm \times W_{\lambda_a}
\times S_{\abs { \lambda_{a+1}}} \times \dotsm \times S_{\abs
  {\lambda_{a+b}}} .
\]

In analogy with the case of symmetric groups, the subgroups $W_p$ for $p \in
\CalS\CalC(n)$ are called \emph{signed Young subgroups.} With respect to the
length function determined by the Coxeter generating set $\{t_1, s_1, \dots,
s_{n-1}\}$ of $W_n$, every left coset of $W_p$ in $W_n$ contains a unique
element of minimal length. Define $X_p$ to be the set of these minimal
length coset representatives and define
\[
x_p = \sum_{w\in X_p} w\in \CC W_n.
\]
It turns out that $\{\, x_p \mid p\in \CalS\CalC(n) \,\}$ is linearly
independent and spans a subalgebra of $\CC W_n$. This subalgebra is the
\emph{Mantaci-Reutenauer algebra of $W_n$} (see
\cite[\S2,3]{bonnafehohlweg:generalized}). In this paper, the
Mantaci-Reutenauer algebra of $W_n$ is denoted by $\Sigma(W_n)$. The reader
should be aware that this is not in accordance with the notation in
\cite{bonnafehohlweg:generalized}, where $\Sigma(W_n)$ denotes the descent
algebra of $W_n$ and $\Sigma'(W_n)$ denotes the Mantaci-Reutenauer algebra
of $W_n$.

\subsection{Idempotents in \texorpdfstring{$\Sigma(W_n)$}
  {SWn}} \label{subsec:idemd}

Our next task is to define a complete set of primitive, orthogonal
idempotents in $\Sigma(W_n)$, and hence a complete set of orthogonal
idempotents in $\CC W_n$, that gives rise to a decomposition of the right
regular representation of $W_n$ as a direct sum of induced representations.

First, suppose $m$ is a positive integer. Recall that if $w\in S_m$ and
$i\in [m-1]$, then $i$ is a \emph{descent} of $w$ if $w(i) > w(i+1)$. Let
$D(w)$ denote the set of descents of $w$ and for $A\subseteq [m-1]$ define
$D_{\subseteq A} = \sum_{D(w) \subseteq A} w$. It is shown in \cite[Section
8.4]{reutenauer:free} that
\[
r_m= \sum_{A \subseteq [m-1] } \frac{( -1 )^ {\abs{A}}}{\abs{A}+1}
D_{\subseteq A}
\]
is an idempotent in the group algebra $\CC S_m$. In fact, by
\cite[\S3]{garsiareutenauer:decomposition}, $r_m$ lies in the descent
algebra of $S_n$, and by \cref{lem:vazlieidem} or \cite[Theorem 3.7]
{bonnafehohlweg:generalized}, $r_m$ lies in the Mantaci-Reutenauer algebra
of $W_n$. We call the idempotent $r_m$ the \emph{Reutenauer idempotent} in
$\CC S_m$.

Notice that if $P=\{z_1< \dotsm < z_m\}$ is an ordered set of positive
integers, then the Reutenauer idempotent $r_P$ is unambiguously defined in
the group algebra $\CC S_P$ by replacing the set $[m]$ by $P$ in the
preceding paragraph.

Next, define 
\[
\epsilon_m^\pm =(1/2)( \id \pm w_{0,m}),
\]
where $\id$ denotes the identity permutation in $W_m$. Then $\epsilon_m^+$
and $\epsilon_m^-$ are idempotents in $\CC W_m$, and it follows from
\cite[Example 3.5]{bonnafehohlweg:generalized} that $\epsilon_m^\pm$ is in
$\Sigma(W_m)$. Similarly, define $\epsilon_P^{\pm}=(1/2) ( \id \pm w_{0,P})$
for $P\subseteq [n]$.

Finally, suppose $p=(p_1, \dots, p_k)$ is a signed composition of $n$, and
let $\xi_i$ denote the sign of $p_i$. Define a composition $|p|$ of $n$ by
\[
|p| = ( |p_1| , \ldots, |p_k|)
\]
(note that this is non-standard notation!), and define
\[
e_p = x_{|p|} \epsilon_{P_1}^{\xi_1} r_{P_1} \cdots
\epsilon_{P_k}^{\xi_k} r_{P_k},
\]
where $x_{|p|}$ is the basis element of $\Sigma(W_n)$ corresponding to $|p|$
(note that $|p|\in \CalS\CalC(n)$). Because $\CalS\CalP(n) \subseteq
\CalS\CalC(n)$, if $\lambda$ is a signed partition of $n$, then $e_\lambda$
is defined.

\begin{prop}\label{prop:idem}
  The elements $e_p$, for $p$ a signed composition of $n$, coincide with the
  elements $I_p$ defined by Vazirani in \cite[Chapter 3]{vazirani:thesis}.
\end{prop}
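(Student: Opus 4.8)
The plan is to compare the two constructions by unwinding the recursive definition of Vazirani's idempotents $I_p$ and matching it term-by-term with the closed formula defining $e_p$. Since both $e_p$ and $I_p$ live inside $\Sigma(W_n)$, it suffices to check equality after applying any faithful representation; the most convenient is to work directly in $\CC W_n$ and compare the two elements as linear combinations of signed permutations, or — better — to exploit the fact that $\{x_q \mid q\in\CalS\CalC(n)\}$ is a basis of $\Sigma(W_n)$ and compare coefficients there. First I would recall Vazirani's recursion from \cite[Chapter 3]{vazirani:thesis}: the $I_p$ are built from the Garsia--Reutenauer-type Lie idempotents on each block, symmetrized (or anti-symmetrized) according to the sign $\xi_i$, and then "spread out" over the appropriate minimal-length coset representatives. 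The key observation is that the factor $x_{|p|}$ in the definition of $e_p$ is exactly the device that distributes the block-wise idempotents $\epsilon_{P_i}^{\xi_i} r_{P_i}$ across all of $W_n$: multiplication by $x_{|p|}$ on the left realizes induction from the signed Young subgroup $W_{|p|}$.

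The key steps, in order, would be: (1) establish that $\epsilon_{P_i}^{\xi_i} r_{P_i}$ is the correct block-local factor, i.e.\ that on the block $P_i$ it agrees with Vazirani's local idempotent — for $\xi_i = +$ this is $\epsilon_{P_i}^+ r_{P_i}$, the symmetrization of the Reutenauer idempotent, corresponding to a positive part, and for $\xi_i = -$ it is $\epsilon_{P_i}^- r_{P_i}$, the anti-symmetrization, corresponding to a negative part; (2) show that the product $\epsilon_{P_1}^{\xi_1} r_{P_1} \cdots \epsilon_{P_k}^{\xi_k} r_{P_k}$ is an idempotent in $\CC W_{|p|}$ lying in $\Sigma(W_{|p|})$, using that the factors for distinct blocks commute (they act on disjoint coordinate sets) and each is idempotent — here \cref{lem:vazlieidem} or \cite[Theorem 3.7]{bonnafehohlweg:generalized} guarantees $r_{P_i}\in\Sigma(W_n)$, and \cite[Example 3.5]{bonnafehohlweg:generalized} gives $\epsilon_{P_i}^{\xi_i}\in\Sigma(W_{P_i})$; (3) identify left multiplication by $x_{|p|}$ with the induction functor $\Sigma(W_{|p|})\to\Sigma(W_n)$, which is precisely the content of the product formula for the $x$-basis in \cite[\S2,3]{bonnafehohlweg:generalized}; and (4) match the resulting element against Vazirani's $I_p$ by induction on $k$ (the number of parts), the base case $k=1$ being the statement that $I_{(m)} = \epsilon_m^+ r_m$ and $I_{(\overline m)} = \epsilon_m^- r_m$, which can be read off directly from her definitions.

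The main obstacle I expect is step (4): reconciling the precise bookkeeping in Vazirani's recursion — in particular the order in which she composes the block factors, and the exact normalization of her symmetrizing/anti-symmetrizing operators — with the linear order implicit in the product $e_p = x_{|p|}\,\epsilon_{P_1}^{\xi_1} r_{P_1}\cdots\epsilon_{P_k}^{\xi_k} r_{P_k}$. One must verify that the relevant factors genuinely commute (so that the ordering is immaterial) and that no stray sign or scalar $1/2$ is lost or doubled when passing between the two conventions; this is where a careful diagram-chase through \cite[Chapter 3]{vazirani:thesis}, translating her notation into the $W_P$, $S_P$, $\epsilon_P^\pm$, $r_P$ notation fixed in \cref{subsec:hypgp} and \cref{subsec:idemd}, will be needed. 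Everything else — idempotency, membership in $\Sigma(W_n)$, the induction interpretation of $x_{|p|}$ — follows formally from the results already cited in the excerpt.
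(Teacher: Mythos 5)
Your outline points in the right general direction---the block-local factors $\epsilon_{P_i}^{\xi_i} r_{P_i}$ do match Vazirani's local pieces, and $x_{|p|}$ is indeed the device that spreads them over $W_n$---but the proposal stops short of the actual mechanism and in one place mischaracterizes what has to be compared. Vazirani's $I_p$ is not given by a recursion on the number of parts: it is defined in one stroke as a sum over all ordered set partitions $J_1+\dotsm+J_k=[n]$ with $|J_i|=|p_i|$ of \emph{concatenation products} $I_{[J_1]}^{\xi_1}*\dotsm*I_{[J_k]}^{\xi_k}$, where $I_{[J]}^{\xi}=w_J\,\epsilon_m^{\xi}\,r_m$. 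So your step (4), an induction on $k$, is not the right frame; the comparison is a single computation, and the two pieces of genuine content that your proposal defers are exactly the ones the proof consists of. First, one must convert each concatenation product into an ordinary product in $\CC W_n$: the identity
\[
w_{J_1}\,\epsilon_{|p_1|}^{\xi_1} r_{|p_1|} * \dotsm * w_{J_k}\,\epsilon_{|p_k|}^{\xi_k} r_{|p_k|}
= w_{(J_1,\dots,J_k)}\,\epsilon_{P_1}^{\xi_1} r_{P_1}\dotsm \epsilon_{P_k}^{\xi_k} r_{P_k},
\]
where $w_{(J_1,\dots,J_k)}$ is the permutation sending each block $P_i$ increasingly onto $J_i$, requires an explicit (if elementary) two-row-notation calculation; it does not follow formally from ``the product formula for the $x$-basis.'' Second, one must identify the set of permutations $w_{(J_1,\dots,J_k)}$, as $(J_1,\dots,J_k)$ ranges over ordered set partitions with $|J_i|=|p_i|$, with the set $X_{|p|}$ of minimal-length coset representatives (via the criterion that $w\in X_{|p|}$ iff $w(j)>0$ for all $j$ and $w|_{P_l}$ is increasing). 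Only after both of these does the sum over ordered set partitions collapse to $x_{|p|}\,\epsilon_{P_1}^{\xi_1} r_{P_1}\dotsm\epsilon_{P_k}^{\xi_k} r_{P_k}=e_p$.

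Two smaller points. Your suggestion to compare coefficients in the basis $\{x_q\}$ of $\Sigma(W_n)$ is unnecessary and would add work: the proof shows the two elements are literally equal in $\CC W_n$. And there is one preliminary you omit entirely: to use $\rho_{[J]}=w_J r_m$ (and its signed analogue) one needs the Reutenauer idempotent, defined via descent sets $D_{\subseteq A}$, rewritten in the Mantaci--Reutenauer basis as $r_m=\sum_{p\in\CalC(m)}\frac{(-1)^{k(p)-1}}{k(p)}x_p^v$ (this is \cref{lem:vazlieidem}); that translation, via the bijection $\psi$ between compositions and subsets of $[m-1]$, is what reconciles the descent-algebra formulation with the formulation used in Vazirani's and Garsia--Reutenauer's definitions. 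Your step (2) (idempotency of the block product) is true but not needed for the equality of elements asserted in the proposition.
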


This proposition is proved in \cref{sec:idem}. The next corollary follows
from the preceding proposition and \cite[\S3.7] {vazirani:thesis}.

\begin{corollary}\label{cor:ep2} 
  For a signed composition $p$ of $n$, the element $e_p$ in $\Sigma(W_n)$ is
  a quasi-idempotent with $e_p^2 = |\Stab(p)|\, e_p$. More generally, if $p$
  and $q$ are signed compositions of $n$ with $\overleftarrow p=
  \overleftarrow q$, then $e_pe_q = |\Stab(q)|\, e_q$.
\end{corollary}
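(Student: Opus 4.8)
The quickest route is to appeal to \cref{prop:idem}. Once we know that $e_p$ equals Vazirani's element $I_p$, both assertions are part of \cite[\S3.7]{vazirani:thesis}: there it is shown that $I_p^2 = |\Stab(p)|\,I_p$ and, whenever $\overleftarrow p = \overleftarrow q$, that $I_p I_q = |\Stab(q)|\,I_q$. So the corollary is immediate from \cref{prop:idem}, and the real content lies in that proposition. What follows is an outline of how one could instead argue directly, both as a sanity check and to explain where the factor $|\Stab(q)|$ comes from.

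Write $f_p = \epsilon_{P_1}^{\xi_1} r_{P_1} \cdots \epsilon_{P_k}^{\xi_k} r_{P_k}$, so that $e_p = x_{|p|}\, f_p$. The first step is to observe that $f_p$ is an idempotent of $\CC W_{|p|}$. The blocks $P_1, \dots, P_k$ of $p$ are pairwise disjoint, so the subalgebras $\CC W_{P_1}, \dots, \CC W_{P_k}$ of $\CC W_n$ commute with one another elementwise, and $f_p$ is the product of the elements $\epsilon_{P_i}^{\xi_i} r_{P_i} \in \CC W_{P_i}$; within a single block $\epsilon_{P_i}^{\xi_i}$ is a degree-one polynomial in the central element $w_{0,P_i}$ of $W_{P_i}$, hence commutes with $r_{P_i} \in \CC S_{P_i}$, while $\epsilon_{P_i}^{\xi_i}$ and $r_{P_i}$ are each idempotent. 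Therefore $f_p^2 = f_p$, and $f_p$ lies in $\CC W_{P_1}\cdots \CC W_{P_k} = \CC W_{|p|}$.

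Given this, the remaining task is to evaluate $e_p e_q = x_{|p|}\, f_p\, x_{|q|}\, f_q$ under the hypothesis $\overleftarrow p = \overleftarrow q$. I would do this by a Mackey-type analysis adapted from \cite[\S3]{garsiareutenauer:decomposition}: expand $x_{|q|} = \sum_{w\in X_{|q|}} w$, decompose the product according to the relevant double cosets of the signed Young subgroups involved, and then use that left-multiplication by the idempotent $f_p$ annihilates every contribution except those coming from signed permutations that carry the blocks of $p$ onto the blocks of $q$ as blocks, respecting sizes and signs. Because $\overleftarrow p = \overleftarrow q$, such permutations exist; the surviving terms each rebuild $x_{|q|} f_q = e_q$, and counting them amounts to counting the permutations of the parts of $q$ that fix $q$, which is exactly $|\Stab(q)|$. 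The case $p=q$ is the special case $e_p^2 = |\Stab(p)|\, e_p$.

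The main obstacle is precisely this last double-coset bookkeeping inside $\CC W_n$: one must control the interaction between the minimal-length coset representatives in $X_{|p|}$ and $X_{|q|}$ (for the Coxeter length attached to $\{t_1, s_1, \dots, s_{n-1}\}$) and the block idempotents $f_p$, $f_q$, and check that every double-coset piece either vanishes or returns a single copy of $e_q$, so that the total multiplicity is precisely $|\Stab(q)|$. This is the genuinely combinatorial step, and it is exactly what \cite[\S3.7]{vazirani:thesis} carries out; invoking \cref{prop:idem} lets us take it over directly rather than redo it.
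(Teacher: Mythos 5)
Your first paragraph is exactly the paper's proof: the corollary is deduced by combining \cref{prop:idem} with \cite[\S3.7]{vazirani:thesis}, and the paper offers nothing beyond that citation. The additional direct sketch is a reasonable gloss (and correctly identifies the double-coset bookkeeping as the real content deferred to Vazirani), but it is not needed and is not what the paper does.
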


\subsection{} \label{subsec:eqdecomp}

Now suppose $\lambda = \left( \lambda_1, \ldots, \lambda_a, \lambda_{a+1},
  \ldots, \lambda_{a+b} \right)$ is a signed partition of $n$ and define
\[
E_{\lambda} = \frac{1}{(a+b)!} \sum_{\overleftarrow p = \lambda} e_p.
\]
It follows from \cref{cor:ep2} that $E_\lambda$ is an idempotent in
$\mathbb{C}W_n$, and by \cref{prop:idem}, $E_\lambda$ coincides with
$E_\lambda$ as defined by Vazirani in \cite[Chapter 4]{vazirani:thesis}, so
the set $\{\,E_\lambda \mid \lambda \in \CalS \CalP(n) \, \}$ is a complete
family of primitive, orthogonal idempotents in $\Sigma(W_n)$. Because the
$E_{\lambda}$'s form a complete set of orthogonal idempotents in
$\mathbb{C}W_n$, we have the direct sum decomposition
\begin{equation}\label{eq:decomp}
  \mathbb{C}W_n \cong \bigoplus\limits_{\lambda \in \CalS\CalP(n)}
  E_{\lambda}\mathbb{C}W_n.
\end{equation}

\subsection{} \label{subsec:cent}

For $i \in [a+b]$, define the positive $|\lambda_i|$-cycle
\[
c_i=( \widehat \lambda_{i-1} +1 \;\;\; \widehat \lambda_{i-1} + 2 \; \;
\cdots \; \; \widehat \lambda_i)
\]  
and the negative $|\lambda_i|$-cycle
\[
d_i =
\begin{cases}
  c_i w_{0,\Lambda_i} & \text{if $\lambda_i$ is odd} \\
  ( \widehat \lambda_{i-1} +1 \; \; \; \widehat \lambda_{i-1} +2 \; \; \;
  \cdots \; \; \widehat \lambda_{i} )^- & \text{if $\lambda_i$ is even.}
\end{cases}
\]
Note that $c_i$ and $d_i$ are supported on the block $\Lambda_i$ of $[n]$
and are defined for both the positive and negative parts of $\lambda$.
Finally, define
\[
w_{\lambda}= c_1 \cdots c_a d_{a+1} \cdots d_{a+b}.
\] 
Then $w_{\lambda}$ is an element of $W_n$ with signed cycle type $\lambda$.
Because of our sign conventions, in general $w_\lambda$ is not in the signed
Young subgroup $W_\lambda$.

As in \cite[\S4.2]{konvalinkapfeifferroever:centralizers}, the centralizer
in $W_n$ of $w_{\lambda}$ is generated by
\[
\{\, c_i, w_{0,\Lambda_i} \mid i\in [a] \,\} \amalg \{\, d_i \mid i\in [a+b]
\setminus [a] \,\} \amalg \{\, y_i \mid \lambda_i = \lambda_{i+1}, \
i\in[a+b-1] \,\},
\] 
where for $i \in [a+b-1] $, $y_i$ is the permutation in $W_n$ defined by
\[
y_i (l) =
\begin{cases}
  l & \text{if $l\notin \Lambda_i \cup \Lambda_{i+1}$} \\
  l + \abs{\lambda_i} & \text{if $l \in \Lambda_i$} \\
  l- \abs{\lambda_i} & \text{if $l \in \Lambda_{i+1}$.}
\end{cases}
\]
Then $y_i$ fixes $[n] \setminus( \Lambda_i \cup\Lambda_{i+1})$ pointwise and
switches the blocks $\Lambda_i$ and $\Lambda_{i+1}$.

For example, if $\lambda = ( 2, 2, 1 , \overline 3, \overline{2},
\overline{2})$, then
\[
w_{\lambda} = c_1c_2c_3d_4d_5 d_6= (1 \; \; 2)(3 \; \; 4)(5) (6\;\;\overline
7\;\;8)^- (9\;\;10)^- (11\;\; 12)^-,
\]
and $Z_{W_{12}}(w_\lambda) = \langle c_1, c_2, c_3, w_{0,\Lambda_1},
w_{0,\Lambda_2}, w_{0,\Lambda_3}, d_4, d_5, d_6, y_1, y_5\rangle$, where
$c_3=\id$, $w_{0,\Lambda_1}=t_1t_2$, $w_{0,\Lambda_2}=t_3t_4$,
$w_{0,\Lambda_3}=t_3$,
\[
y_1 = \left(
  \begin{array}{cc|cc|c c }
    1 & 2 & 3 & 4 & 5 & \dotsm  \\
    3 & 4 & 1 & 2 & 5 & \dotsm 
  \end{array}
\right) \quad\text{and}\quad y_5 = \left(
  \begin{array}{ccc|cc|cc}
    1 & \dotsm & 8 & 9 & 10 & 11 &12 \\
    1 & \dotsm & 8 & 11 & 12 & 9  & 10
  \end{array} \right).
\]

\subsection{} \label{subsec:thm}

For a positive integer $m$, let $\omega_m$ be the primitive $m^{th}$ root of
unity
\[
\omega_m = e^{2\pi \sqrt{-1}/m}.
\] 
Also, for a group $G$ and an element $g$ in $G$ of order $|g|=m$, define an
idempotent $\zeta_g$ in $\mathbb{C}G$ by
\[
\zeta_g = \frac{1}{m} \sum\limits_{j=1}^{m} \omega_m^{-j} g^j.
\]
If $m$ is odd, define also
\[
\tilde \zeta_g = \frac{1}{m} \sum\limits_{j=1}^{m} (\omega_m^{(m+1)/2})^{-j}
g^j.
\]
(The coefficient of $g^j$ has been chosen to simplify the formula for
$\varphi_\lambda$ in \cref{thm:main}.)

For $i \in [a+b]$, set
\[
f_i =
\begin{cases}
  \epsilon_{\Lambda_i}^{+}\zeta_{c_i} & \text{if $i\in [a]$} \\
  \epsilon_{\Lambda_i}^{-} \tilde \zeta_{c_i} & \text{if $i \in
    [a+b]\setminus [a]$ and $\lambda_i$ is odd} \\
  \zeta_{d_i} & \text{if $i \in [a+b]\setminus [a]$ and $\lambda_i$ is
    even,}
\end{cases}
\]
and define $\elt$ in $\mathbb{C}W_n$ by
\[
\elt = \xlp f_1  \cdots f_{a+b}.
\] 
We can now state the first main theorem.

\begin{theorem}\label{thm:main}
  Suppose $\lambda$ is a signed partition of $n$.
  \begin{enumerate}
  \item The group $Z_{W_n}(w_{\lambda})$ acts on $\elt$ on the right as
    scalars. Let $\varphi_{\lambda}$ be the character afforded by the
    $\mathbb{C}Z_{W_n}(w_{\lambda})$-module $\mathbb{C}\elt$. Then
    $\varphi_{\lambda}$ is given by
    \[
    \varphi_\lambda(w)=
    \begin{cases}
      \omega_{|c_i|} &\text{if $w=c_i$ for $i\in [a]$}\\
      \omega_{|d_i|}&\text {if $w=d_i$ for $i\in [a+b]\setminus [a]$}\\
      1 &\parbox[t]{.6\textwidth}{if $w=w_{0,\Lambda_i}$ for $i\in [a]$, or
        if $w=y_i$ for $i\in [a+b-1]$ with $\lambda_i=\lambda_{i+1}$.}
    \end{cases}
    \]
  \item There is an isomorphism of right $\mathbb{C}W_n$-modules
    \[
    E_{\lambda} \mathbb{C}W_n \cong \Ind_{Z_{W_n}(w_{\lambda})}^{W_n} (\CC
    \elt).
    \]
  \end{enumerate}
\end{theorem}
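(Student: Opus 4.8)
The plan is to establish part (1) first, since part (2) will follow from it by a dimension count combined with \cref{cor:ep2}. For part (1), the key observation is that $\elt = \xlp f_1 \cdots f_{a+b}$ is built from the commuting factors $f_i$, each supported on the block $\Lambda_i$, and that the generators of $Z_{W_n}(w_\lambda)$ listed in \cref{subsec:cent} fall into three types: the block cycles $c_i$ and $d_i$, the central-in-block elements $w_{0,\Lambda_i}$, and the block-swapping permutations $y_i$. First I would treat the $c_i$ and $d_i$. Since $\zeta_{g}$ satisfies $g\,\zeta_g = \omega_{|g|}\zeta_g$ (immediate from the definition, reindexing the sum $j\mapsto j-1$ and using $g^{|g|}=\id$), and similarly $g\,\tilde\zeta_g = \omega_{|g|}^{(m+1)/2}\tilde\zeta_g$ when $m=|g|$ is odd, multiplying $\elt$ on the right by $c_i$ or $d_i$ pulls the scalar out of the $i$-th factor while leaving the others fixed (they are supported on disjoint blocks and hence commute). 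One must be careful with the three cases in the definition of $f_i$: for $i\in[a]$ the factor is $\epsilon_{\Lambda_i}^+\zeta_{c_i}$, so right multiplication by $c_i$ gives $\omega_{|c_i|}$; for $i>a$ with $\lambda_i$ even, $f_i=\zeta_{d_i}$ and right multiplication by $d_i$ gives $\omega_{|d_i|}$ directly; for $i>a$ with $\lambda_i$ odd, $f_i=\epsilon_{\Lambda_i}^-\tilde\zeta_{c_i}$ and here the computation that $d_i$ acts by $\omega_{|d_i|}=\omega_{2|\lambda_i|}$ uses $d_i = c_i w_{0,\Lambda_i}$, the fact that $w_{0,\Lambda_i}\,\epsilon^-_{\Lambda_i} = -\epsilon^-_{\Lambda_i}$, and the arithmetic identity $-\,\omega_{|\lambda_i|}^{(|\lambda_i|+1)/2} = \omega_{2|\lambda_i|}$ valid for odd $|\lambda_i|$ — this is precisely why $\tilde\zeta$ was introduced with that exponent.

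Next I would handle $w_{0,\Lambda_i}$ for $i\in[a]$: here $f_i = \epsilon^+_{\Lambda_i}\zeta_{c_i}$ and $w_{0,\Lambda_i}\epsilon^+_{\Lambda_i} = \epsilon^+_{\Lambda_i}$, so right multiplication fixes $\elt$, giving the eigenvalue $1$. The remaining and most delicate generators are the block swaps $y_i$ (when $\lambda_i = \lambda_{i+1}$). Right multiplication by $y_i$ interchanges the blocks $\Lambda_i$ and $\Lambda_{i+1}$; since $\lambda_i=\lambda_{i+1}$ the factors $f_i$ and $f_{i+1}$ have the same \emph{form} (same case in the definition), and $y_i$ conjugates $c_i$ to $c_{i+1}$, $w_{0,\Lambda_i}$ to $w_{0,\Lambda_{i+1}}$, etc. So $f_i\,y_i$ and $f_{i+1}\,y_i$ equal $y_i\,f_{i+1}$ and $y_i\,f_i$ respectively (as elements translated across the swap), and the product $f_1\cdots f_{a+b}\,y_i$ rearranges to $y_i f_1\cdots f_{a+b}$ with the $i$ and $i+1$ factors transposed — but since the two factors are identical in form and commute, the product is unchanged, while the leading $\xlp$ absorbs the residual coset-representative bookkeeping because $y_i$ together with its minimal-length correction lies in the parabolic $W_{|\lambda|}$ (the block sizes are equal), hence $\xlp\,y_i$-type manipulations stay inside $\xlp\CC W_{|\lambda|}$. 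Making this last point precise — that $\elt\,y_i = \elt$, i.e.\ eigenvalue $1$ — is where I expect the main technical work: one needs that $\xlp f_1\cdots f_{a+b}$ is genuinely fixed by $y_i$, which requires tracking how $\xlp = x_{|\lambda|}$ interacts with the block transposition, perhaps most cleanly by realizing $\xlp f_1\cdots f_{a+b}$ as a sum over the coset space $W_n/Z_\lambda'$ for an appropriate subgroup and checking invariance there, or alternatively by comparing with Vazirani's $E_\lambda$ via \cref{prop:idem} and known symmetry properties of her idempotents.

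With part (1) in hand, part (2) goes as follows. The right action of $Z_{W_n}(w_\lambda)$ on the one-dimensional space $\CC\elt$ is the linear character $\varphi_\lambda$, so by Frobenius reciprocity there is a nonzero $\CC W_n$-module map $\Ind_{Z_{W_n}(w_\lambda)}^{W_n}(\CC\elt) \to \elt\,\CC W_n$ sending the generator to $\elt$; I would show it is surjective (its image is a submodule containing $\elt$, hence all of $\elt\,\CC W_n$) and then compare dimensions. The dimension of the induced module is $[W_n : Z_{W_n}(w_\lambda)]$, which is the size of the conjugacy class of $w_\lambda$, i.e.\ the number of signed permutations of cycle type $\lambda$. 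On the other side, $\dim E_\lambda\CC W_n$ can be computed from \cref{cor:ep2}: summing over $\lambda$ the identity must reproduce $|W_n| = \sum_\lambda [W_n:Z_{W_n}(w_\lambda)]$, and a compatible per-$\lambda$ equality is forced because \eqref{eq:decomp} together with the orthogonality of the $E_\lambda$ gives $\dim E_\lambda\CC W_n = \operatorname{rank}(E_\lambda\,\cdot) $ on $\CC W_n$; one identifies this rank with the trace of right-multiplication by $E_\lambda$, and a direct trace computation (using that $e_p$ involves $x_{|p|}$ and the structure of $\epsilon^\pm r$) yields exactly the class size. An alternative, cleaner route for the dimension count is to invoke that $\{E_\lambda\}$ is the complete set of \emph{primitive} orthogonal idempotents (stated in \cref{subsec:eqdecomp}, citing Vazirani), so that $E_\lambda\CC W_n$ is an indecomposable projective whose dimension equals the corresponding entry of the Cartan-type data; matching this with $|Z_{W_n}(w_\lambda)|$-indexed induced modules then identifies the two. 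The surjective map between equidimensional modules is an isomorphism, completing the proof.
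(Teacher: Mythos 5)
Your part (1) follows the paper's route: a generator-by-generator computation, with the same eigenvalue calculations for $c_i$, $d_i$ (including the identity $-\omega_{|\lambda_i|}^{(|\lambda_i|+1)/2}=\omega_{2|\lambda_i|}$ for odd parts) and $w_{0,\Lambda_i}$. Your treatment of $y_i$ is the only soft spot: the phrase about $y_i$ ``together with its minimal-length correction'' lying in $W_{|\lambda|}$ is off ($y_i$ swaps blocks, so it is not in $W_{|\lambda|}$); the clean argument, which the paper gives, is that $y_if_iy_i=f_{i+1}$ lets you commute $y_i$ past the $f$'s, and then $\xlp y_i=\xlp$ because right multiplication by $y_i$ preserves the defining conditions of $X_{|\lambda|}$ ($w(j)>0$ and $w|_{\Lambda_l}$ increasing), so $X_{|\lambda|}y_i=X_{|\lambda|}$. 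You flagged this as the technical point, and it is fixable, so I count part (1) as essentially correct.

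Part (2) has a genuine gap. Your argument produces a surjection $\Ind_{Z_{W_n}(w_\lambda)}^{W_n}(\CC\elt)\to\elt\,\CC W_n$ and then computes dimensions, but at no point do you construct a module map relating $E_\lambda\CC W_n$ to $\elt\,\CC W_n$. Equality of dimensions does not give an isomorphism of $\CC W_n$-modules, and ``matching'' the induced modules with the indecomposable projectives by a count cannot identify \emph{which} induced module corresponds to \emph{which} idempotent. The paper's chain is $E_\lambda\CC W_n=e_\lambda\CC W_n\cong\elt\,\CC W_n\cong\Ind(\CC\elt)$: the first equality is your \cref{cor:ep2} observation, and the last isomorphism is your dimension count (done globally, summing $\dim\widetilde e_\mu\CC W_n\le |W_n|/|Z_{W_n}(w_\mu)|$ over all $\mu$ against $\dim\CC W_n=|W_n|$). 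The missing middle isomorphism $e_\lambda\CC W_n\cong\elt\,\CC W_n$ is the real content: it requires replacing each factor $\epsilon_{\Lambda_i}^{\pm}r_{\Lambda_i}$ by $f_i$ up to isomorphism of right ideals, which rests on (i) the fact that the Reutenauer and Klyachko idempotents afford the same character (Garsia), so $r_m\CC S_m\cong\zeta_c\CC S_m$, and (ii) \cref{prop:evenind}, which shows $\epsilon^-\zeta_c\CC W_m\cong\zeta_d\CC W_m$ for $m$ even and is precisely what justifies using $\zeta_{d_i}$ rather than $\epsilon_{\Lambda_i}^-\zeta_{c_i}$ for even negative parts. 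None of this appears in your proposal, and without it the asserted isomorphism $E_\lambda\CC W_n\cong\Ind_{Z_{W_n}(w_\lambda)}^{W_n}(\CC\elt)$ is not established.
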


The theorem is proved in \cref{sec:mainthm}. A key ingredient in the proof
is \cref{prop:evenind}, where it is shown that if $\lambda_i$ is even, $C=
\la c_i, w_{0,\Lambda_i} \ra$ (the direct product of a cyclic group of order
two and a cyclic group of order $|\lambda_i|$), and $D= \la d_i\ra$ (a
cyclic group of order $2|\lambda_i|$), then $\Ind_{C}^{W_{\Lambda_i}} (\CC
\epsilon_{\Lambda_i}^{-} \zeta_{c_i} ) = \Ind_{D}^{W_{\Lambda_i}} (\CC
\zeta_{d_i} )$.

The next corollary follows immediately from
\cref{subsec:eqdecomp}\eqref{eq:decomp} and the theorem.

\begin{corollary}
  Let $\rho_n$ denote the regular character of $W_n$. Then
  \[
  \rho_n= \bigoplus_{\lambda \in \CalS\CalP(n)}
  \Ind_{Z_{W_n}(w_{\lambda})}^{W_n} (\varphi_\lambda).
  \]
\end{corollary}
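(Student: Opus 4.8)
The plan is to read the statement off directly from \cref{subsec:eqdecomp}\eqref{eq:decomp} and \cref{thm:main}; the only content is bookkeeping with characters, since all of the substantive work has already been done. First I would recall that $\rho_n$ is, by definition, the character afforded by the right $\CC W_n$-module $\CC W_n$. By \cref{subsec:eqdecomp}\eqref{eq:decomp} there is an isomorphism of right $\CC W_n$-modules
\[
\CC W_n \cong \bigoplus_{\lambda \in \CalS\CalP(n)} E_\lambda \CC W_n,
\]
and since the character afforded by a finite direct sum of modules is the sum of the characters afforded by the summands, $\rho_n = \sum_{\lambda \in \CalS\CalP(n)} \chi_\lambda$, where $\chi_\lambda$ denotes the character afforded by the right $\CC W_n$-module $E_\lambda \CC W_n$.

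Next I would apply \cref{thm:main}. Part (1) asserts that $Z_{W_n}(w_\lambda)$ acts on $\elt$ on the right by scalars, so $\CC \elt$ is a one-dimensional right $\CC Z_{W_n}(w_\lambda)$-module, and it identifies the character it affords as $\varphi_\lambda$. Part (2) gives an isomorphism of right $\CC W_n$-modules $E_\lambda \CC W_n \cong \Ind_{Z_{W_n}(w_\lambda)}^{W_n}(\CC \elt)$; isomorphic modules afford the same character, so $\chi_\lambda$ is the character afforded by $\Ind_{Z_{W_n}(w_\lambda)}^{W_n}(\CC \elt)$. Since the character afforded by an induced module is the induced character, $\chi_\lambda = \Ind_{Z_{W_n}(w_\lambda)}^{W_n}(\varphi_\lambda)$. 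Substituting this into $\rho_n = \sum_\lambda \chi_\lambda$ proves the corollary; writing $\bigoplus$ in place of $\sum$ simply records that \eqref{eq:decomp} realizes this as an internal direct sum decomposition of $\CC W_n$, rather than merely an equality of characters.

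There is no genuine obstacle at this stage. Every nontrivial ingredient — the construction of the idempotents $E_\lambda$ and the decomposition \eqref{eq:decomp} (via \cref{prop:idem} and \cref{cor:ep2}), the identification of $Z_{W_n}(w_\lambda)$ and its generators, and the module isomorphism and explicit character formula of \cref{thm:main} — is established before the corollary, which is then just the translation of \cref{thm:main}(2) into the language of characters and summed over $\lambda \in \CalS\CalP(n)$.
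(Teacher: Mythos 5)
Your argument is exactly the one the paper intends: the corollary is stated to follow immediately from the decomposition \cref{subsec:eqdecomp}\eqref{eq:decomp} and \cref{thm:main}, and you have simply spelled out the routine character-theoretic bookkeeping (characters add over direct sums, isomorphic modules afford equal characters, induced modules afford induced characters). This is correct and matches the paper's proof.
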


\subsection{A question of Bonnaf\'e} \label{subsec:bonn}

Let $\cfC(W_n)$ denote the algebra of $\CC$-valued class functions on
$W_n$. Then $\cfC(W_n)$ is a split, semisimple, commutative $\CC$-algebra.

For $\lambda\in \CalS\CalP(n)$, let $u_\lambda$ be the characteristic
function of the conjugacy class of $w_\lambda$. Then $\{\, u_\lambda \mid
\lambda\in \CalS\CalP(n)\,\}$ is the (unique) basis of $\cfC(W_n)$
consisting of primitive idempotents.

Next, let $1_{W_p}$ be the trivial character of $W_p$ and define
\[
\theta_n\colon \Sigma(W_n)\to \cfC(W_n)\quad \text{by} \quad \theta_n(x_p)=
\Ind_{W_p}^{W_n}(1_{W_p})
\]
and linearity. Bonnaf\'e and Hohlweg \cite[Theorem 3.7]
{bonnafehohlweg:generalized} have shown that $\theta_n$ is an algebra
homomorphism with kernel equal to the Jacobson radical of
$\Sigma(W_n)$. Therefore, if $\{\, F_\lambda \mid \lambda\in
\CalS\CalP(n)\,\}$ is a complete set of primitive, orthogonal idempotents in
$\Sigma(W_n)$, then $\{\, \theta_n (F_\lambda) \mid \lambda\in
\CalS\CalP(n)\,\}$ is the set of primitive idempotents in $\cfC(W_n)$, and
so there is a permutation of $\CalS\CalP(n)$, say $\lambda\mapsto
\lambda^*$, so that $\theta_n(F_\lambda)= u_{\lambda^*}$ for all $\lambda
\in \CalS\CalP(n)$.

Bonnaf\'e \cite[\S10]{bonnafe:mantaci} asked whether it was possible to find
a set of primitive idempotents $\{\,F_\lambda\,\}$ such that $F_\lambda \CC
W_n \cong \Ind_{Z_{W_n}(w_\lambda)} ^{W_n}(\eta_\lambda)$ for some linear
character $\eta_\lambda$ of $Z_{W_n}(w_\lambda)$. It follows from
\cref{thm:main} that the idempotents $\{\, E_\lambda\mid \lambda\in
\CalS\CalP(n)\,\}$ constructed by Vazirani give a positive answer to this
question. The permutation $\lambda \mapsto \lambda^*$ such that
$\theta_n(E_\lambda)= u_{\lambda^*}$ is given in the next theorem.

For $p= (p_1, \dots, p_{k}) \in \CalS\CalC(n)$ define $p'\in \CalS\CalC(n)$
by
\[
p_i' = \begin{cases} p_i &\text{if $p_i$ is odd} \\
  \overline{p_i} &\text{if $p_i$ is even.}
\end{cases}
\]
For example, if $\lambda=(4,3,2,2,1, \overline 6, \overline 5, \overline 4,
\overline 3, \overline 3, \overline 2)$, then
\[
\lambda'= (\overline 4,3,\overline 2,\overline 2,1, 6, \overline 5, 4,
\overline 3, \overline 3, 2)\quad\text{and}\quad\overleftarrow{\lambda'}=
(6,4,3,2,1, \overline 5, \overline 4, \overline 3, \overline 3, \overline
2,\overline 2 ).
\]

\begin{theorem}\label{thm:bonnperm}
  Suppose $\lambda$ is a signed partition of $n$. Then $\theta_n(E_\lambda)=
  u_{\overleftarrow{\lambda'}}$.
\end{theorem}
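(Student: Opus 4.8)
\textbf{Proof plan for \cref{thm:bonnperm}.}

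The plan is to compute $\theta_n(E_\lambda)$ by tracking the image of $e_\lambda$ under $\theta_n$ and then reading off which idempotent $u_\mu$ of $\cfC(W_n)$ it equals. Since $\theta_n$ is an algebra homomorphism and $E_\lambda = \frac{1}{(a+b)!}\sum_{\overleftarrow p = \lambda} e_p$ with each $e_p = x_{|p|}\epsilon_{P_1}^{\xi_1}r_{P_1}\cdots\epsilon_{P_k}^{\xi_k}r_{P_k}$, the first task is to understand $\theta_n$ on the building blocks $\epsilon_P^\pm$ and $r_P$. For the Reutenauer idempotent $r_P$ with $|P|=m$: it is known (from \cite{garsiareutenauer:decomposition}, and it is where the free Lie algebra enters the classical story) that as an element of the descent algebra $r_m$ maps under the analogous $S_m$-version of $\theta$ to the idempotent in $\cfC(S_m)$ picking out the class of an $m$-cycle; more precisely, $\theta$ applied to $r_m$ (as an element of $\Sigma(W_n)$) should give the characteristic function of the set of $w\in W_n$ whose restriction to $P$ is an $m$-cycle with no sign change, i.e.\ a ``$+$ $m$-cycle on $P$''. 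For $\epsilon_P^\pm = \tfrac12(\id \pm w_{0,P})$: since $w_{0,P}$ is central in $W_P$ and acts as $-1$ on the $P$-coordinates, $\theta$ should send $\epsilon_P^+$ to the class function supported on signed permutations whose ``$P$-part'' has an even number of sign changes, and $\epsilon_P^-$ to those with an odd number. I would make all of this precise by working in $\cfC(W_n)$ with the basis $\{u_\mu\}$ and computing inner products, or equivalently by using the description of $\theta_n(x_p) = \Ind_{W_p}^{W_n}(1_{W_p})$ together with the known formula (\cite{bonnafehohlweg:generalized}) for these induced characters in terms of signed cycle types.

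Second, I would assemble these pieces multiplicatively. Because the blocks $P_1,\dots,P_k$ of $|p|$ partition $[n]$, the product $\epsilon_{P_1}^{\xi_1}r_{P_1}\cdots\epsilon_{P_k}^{\xi_k}r_{P_k}$ lives in the group algebra of $W_{P_1}\times\cdots\times W_{P_k}$ and its image under (the block-factored version of) $\theta$ is the external product of the factors; then multiplying by $x_{|p|}$ and applying $\theta_n(x_{|p|}) = \Ind_{W_{|p|}}^{W_n}(1)$ induces up to $W_n$. The upshot should be that $\theta_n(e_p)$ is, up to a positive scalar, the characteristic function of the conjugacy class of the element that has a positive $|p_i|$-cycle on $P_i$ when $p_i > 0$ and a negative $|p_i|$-cycle on $P_i$ when $p_i < 0$ --- \emph{but} with the crucial twist coming from the interaction of $\epsilon_P^-$ (odd number of sign changes) with the $m$-cycle structure: a positive $m$-cycle composed with a single sign change $w_{0,P}$ is conjugate to a negative $m$-cycle only when $m$ is odd (compare the definition of $d_i$ in \cref{subsec:cent}, where $d_i = c_i w_{0,\Lambda_i}$ exactly when $\lambda_i$ is odd). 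This is precisely the mechanism that turns $p_i$ into $p_i'$: for $p_i$ odd the sign $\xi_i$ is genuinely recorded as the sign of the resulting cycle, whereas for $p_i$ even the factor $\epsilon_{P_i}^{\xi_i}r_{P_i}$ ends up selecting cycle type $\overline{|p_i|}$ regardless of $\xi_i$ (an even-length positive cycle times $w_{0,P}$ is a negative cycle; an even-length positive cycle alone is a positive cycle --- and here I expect the relevant identity to be that $\theta$ of the ``$\epsilon^+$ times Reutenauer'' combination on an even block yields the \emph{negative} even cycle, which is the content anticipated by \cref{prop:evenind}). So $\theta_n(e_p) = (\text{positive scalar})\cdot u_{\overleftarrow{p'}}$.

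Third, summing over all $p$ with $\overleftarrow p = \lambda$: every such $p$ has the same multiset of parts, hence the same $\overleftarrow{p'}$, namely $\overleftarrow{\lambda'}$; therefore $\theta_n(E_\lambda) = \frac{1}{(a+b)!}\sum_{\overleftarrow p = \lambda}\theta_n(e_p)$ is a positive multiple of $u_{\overleftarrow{\lambda'}}$. Since $E_\lambda$ is idempotent and $\theta_n$ is a homomorphism, $\theta_n(E_\lambda)$ is an idempotent in $\cfC(W_n)$; a positive scalar multiple of the primitive idempotent $u_{\overleftarrow{\lambda'}}$ that is itself idempotent must equal $u_{\overleftarrow{\lambda'}}$, giving the result. (Alternatively, and perhaps more cleanly, one can appeal to \cref{thm:main}(2): $\theta_n(E_\lambda)$ is the class function attached to $E_\lambda\CC W_n \cong \Ind_{Z_{W_n}(w_\lambda)}^{W_n}(\CC\elt)$ under the composite $\Sigma(W_n)\to\cfC(W_n)$, and one identifies the class of $\Ind_{Z_{W_n}(w_\lambda)}^{W_n}(\varphi_\lambda)$ with the support conjugacy class directly --- but this risks circularity if \cref{thm:bonnperm} is used in proving \cref{thm:main}, so I would prefer the direct computation with $e_p$.)

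The main obstacle will be step two: getting the parity bookkeeping exactly right, i.e.\ proving the identity that on an even block $P_i$ the element $\epsilon_{P_i}^{\xi_i} r_{P_i}$ has $\theta$-image supported (up to positive scalar) on the negative $|p_i|$-cycle class regardless of $\xi_i$, while on an odd block the sign $\xi_i$ survives. Concretely this requires understanding $\theta$ of a product $\epsilon_P^- r_P$, which is not simply a single group element, and showing that inducing the trivial character along the relevant signed Young subgroup picks out the right signed cycle type --- this is the honest combinatorial heart of the argument and is where I expect to invoke \cref{prop:evenind} (on the even case) and the classical symmetric-group fact about $r_m$ mapping to the $m$-cycle class (on the odd/positive case). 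Once that identity is in hand, steps one and three are bookkeeping.
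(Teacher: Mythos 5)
Your overall architecture is the same as the paper's: reduce from $E_\lambda$ to the $e_p$'s, compute $\theta$ blockwise on the factors $\epsilon_{P_i}^{\xi_i}r_{P_i}$, assemble via the tensor decomposition of $W_{|p|}$ and induction along $x_{|p|}$, and identify the resulting class function. (Your trick of pinning the scalar to $1$ by idempotency of $\theta_n(E_\lambda)$ and primitivity of $u_{\overleftarrow{\lambda'}}$ is legitimate and slightly cleaner than the paper's explicit computation of $\la\theta_n(e_p),u_\mu\ra_{W_n}=2^{-k}|p_1\dotsm p_k|\inverse$ combined with the order of $Z_{W_n}(w_{\overleftarrow{\lambda'}})$; you only need to know the coefficient is nonzero, which indeed follows since $\theta_n$ carries a complete set of primitive idempotents to the primitive idempotents of $\cfC(W_n)$.)

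However, the identity you single out as the ``honest combinatorial heart'' is stated incorrectly, and in a way that would produce the wrong permutation $\lambda\mapsto\lambda^*$. You claim that on an even block $\epsilon_{P_i}^{\xi_i}r_{P_i}$ has $\theta$-image supported on the \emph{negative} $|p_i|$-cycle class regardless of $\xi_i$. What is actually true (\cref{prop:rm}) is that on an even block the sign is \emph{reversed}: $\theta_m(\epsilon_m^+r_m)=u_{(\overline m)}$ but $\theta_m(\epsilon_m^-r_m)=u_{(m)}$. Your version is inconsistent with the definition of $\lambda'$, which flips (rather than negates) the sign of even parts --- e.g.\ $(\overline 2)'=(2)$, so the theorem asserts $\theta_2(E_{(\overline 2)})=u_{(2)}$, the positive class --- and it is inconsistent with the fact that $\lambda\mapsto\theta_n(E_\lambda)$ must be a bijection onto the primitive idempotents of $\cfC(W_n)$: under your claim $E_{(2)}$ and $E_{(\overline 2)}$ would have the same image. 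A second, related slip: $\theta_m(r_m)$ is not the characteristic function of positive $m$-cycles only; it is the lift of $u_{(m)}^{S_m}$ through the projection $W_m\to S_m$, namely $u_{(m)}+u_{(\overline m)}$. If it were supported only on the positive class, then $\theta_m(\epsilon_m^-r_m)=(1/2)(1-\varepsilon_m)u_{(m)}$ would vanish for odd $m$, and the whole computation would collapse. The correct mechanism is the multiplication of $u_{(m)}+u_{(\overline m)}$ by the sign character $\varepsilon_m=\theta_m(w_{0,m})$, whose value on an $m$-cycle is $(-1)^{m-1}$ and on a negative $m$-cycle is $(-1)^m$; this is where the parity enters, not via \cref{prop:evenind}, which is a module isomorphism used in the proof of \cref{thm:main} and plays no role here. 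With \cref{prop:rm} in place of your expected identities, the rest of your plan goes through.
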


This theorem is proved in \cref{sec:theta}.

\section{Proof of \texorpdfstring{\cref{prop:idem}}{cref}} \label{sec:idem}

In this section, $p=(p_1, \dots, p_k)$ is a fixed signed composition of $n$,
and we show that the idempotent denoted by $I_p$ in \cite[Chapter
3]{vazirani:thesis} is equal to $e_p = x_{|p|} \epsilon_{P_1}^{\xi_1}
r_{P_1} \cdots \epsilon_{P_k}^{\xi_k} r_{P_k}$ (here $\xi_i$ is the sign of
$p_i$). In order to do so, we first reformulate the definition of the
idempotents in $\CC S_n$ denoted by $I_p$ in
\cite{garsiareutenauer:decomposition} (for $p\in \CalC(n)$). This requires
the basis of $\Sigma(W_n)$ used in \cite{mantacireutenauer:generalization}
and \cite{vazirani:thesis}.

\subsection{The Mantaci-Reutenauer basis of \texorpdfstring{$\Sigma(W_n)$}
  {SWn}} \label{subsec:mrbasis}

Define a partial order on $\CalS\CalC(n)$ by ``signed refinement,'' that is,
for $p, q\in \CalS\CalC(n)$, define $p\leq q$ if $q$ can be obtained from
$p$ by combining consecutive parts with the same sign. In this case, say
that $p$ is \emph{finer} than $q$. Here we are following the presentation in
\cite[Chapter 3]{vazirani:thesis}, in which the partial order is reversed
from that in \cite{mantacireutenauer:generalization}.

For example, $p = \left( 1, 1, \overline{2}, \overline{1}, 2, 3, 2 \right)$
is finer than $q = \left( 2, \overline{3}, 2, 3, 2 \right)$, which in turn
is finer than $r = \left( 2, \overline{3}, 7 \right)$; moreover, $r$ is
maximal with respect to the partial order.

Next, for $w\in W_n$, say that $i\in [n-1]$ is a \emph{descent} of $w$ if
\begin{itemize}
\item $w(i)$ and $w(i+1)$ have the same sign and $\abs{w(i)}> \abs{w(i+1)}$,
  or
\item $w(i)$ and $w(i+1)$ have opposite signs.
\end{itemize}
Let $\D(w)$ denote the set of descents of $w$. Notice that for $w\in S_n$
this definition agrees with that in \cref{subsec:idemd}, but that in general
$\D(w)$ is not the descent set of $w$ with respect to a positive system of
roots.

For example, in one row notation let $w=w(1) w(2) \cdots w(n) =
2\,1\,\overline 3\, \overline 6\, \overline 5\, 4\, 8\, \overline{7}$ in
$W_8$.  Then $\D(w)= \{1, 2, 4, 5, 7\}$, where the descents at $2$, $5$, and
$7$ arise from sign changes. Note that the descents of $w$ partition the set
$[8]$ into six blocks:
\[
w=\begin{array}{c|c|cc|c|cc|c}
    2 & 1 & \overline{3} 
               & \overline{6} & \overline{5} & 4 & 8 & \overline{7} .
\end{array}  
\]

Finally, for $w\in W_n$, the \emph{descent shape of $w$}, denoted by
$\ds(w)$, is the signed composition $p= (p_1, p_2, \ldots, p_k)$ such that
\begin{itemize}
\item $\D(w) = \{\widehat p_1, \widehat p_2, \dots, \widehat p_{k-1} \}$,
  and
\item the sign of $p_i$ is equal to the sign of $w( \widehat p_i)$.
\end{itemize}
In other words, the descent shape of $w$ is found by using the descents of
$w$ to break the set $[n]$ into blocks, and the sizes and signs of the
blocks determine the parts of $\ds(w)$. For example, with $w$ as above,
$\ds(w)= \left( 1, 1, \overline{2}, \overline{1}, 2, \overline{1} \right)$.

Now for $p\in\CalS \CalC(n)$, define
\[
x_p^v = \sum_{\ds(w) \geq p} w.
\]
Then $\{\, x_p^v \mid p\in \CalS\CalC(n)\,\}$ is a basis of $\Sigma(W_n)$
(see \cite[\S2.8] {bonnafehohlweg:generalized}).

\subsection{}\label{subsec:psi} 

Notice that for $w\in W_n$, the following statements are equivalent:
\begin{itemize}
\item $w\in S_n$,
\item $\ds(w) \in \CalC(n)$, and
\item $\ds(w) \leq (\,n\,)$.
\end{itemize}

For a composition or signed composition $p$ of $n$, let $k(p)$ denote the
number of parts of $p$. Also, let $\psi$ denote the bijection between
(unsigned) compositions of $n$ and subsets of $[n-1]$ given by
\[
\psi (p) = \{ \widehat p_1, \widehat p_2, \dots, \widehat p_{k-1} \}
\]
when $p = ( p_1, p_2, \dots, p_k)$. The following statements follow
immediately from the definitions:
\begin{itemize}
\item For $p\in \CalC(n)$ and $w\in W_n$, $\ds(w) = p$ if and only if $w\in
  S_n$ and $D(w)= \psi(p)$.
\item For $p,q\in \CalC(n)$, $p\leq q$ if and only if $\psi(q) \subseteq
  \psi(p)$.
\item For $p\in \CalC(n)$, $k(p) = |\psi(p)|+1$.
\end{itemize}
It follows from the first two statements that for $p\in \CalC(n)$,
\[
x_p^v = \sum_{\ds(w) \geq p} w = \sum_{\substack{w\in S_n\\ D(w) \subseteq
    \psi(p)}} w = D_{\subseteq \psi(p)} .
\]

\begin{lemma}\label{lem:vazlieidem} 
  Let $m$ be a positive integer. Then the Reutenauer idempotent $r_m\in \CC
  S_m$ may be expressed as
  \[
  r_m= \sum_{p\in \CalC(m)} \frac{( -1) ^{k(p)-1}}{k(p)} x_p^v.
  \]
\end{lemma}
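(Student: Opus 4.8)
The plan is to start from the definition of $r_m$ given in the excerpt, namely
\[
r_m= \sum_{A \subseteq [m-1] } \frac{( -1 )^ {\abs{A}}}{\abs{A}+1} D_{\subseteq A},
\]
and translate it into the language of compositions via the bijection $\psi$ between compositions of $m$ and subsets of $[m-1]$. Since $\psi$ is a bijection, I would reindex the sum over $A\subseteq[m-1]$ as a sum over $p\in\CalC(m)$ with $A=\psi(p)$. The third bulleted fact in \cref{subsec:psi} gives $k(p)=\abs{\psi(p)}+1$, so $\abs{A}=k(p)-1$ and $\abs{A}+1=k(p)$; thus $\frac{(-1)^{\abs A}}{\abs A+1}=\frac{(-1)^{k(p)-1}}{k(p)}$. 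Finally, the displayed identity at the end of \cref{subsec:psi} says that for $p\in\CalC(m)$, $x_p^v = D_{\subseteq\psi(p)}$ (applied with $m$ in place of $n$). Substituting these three observations directly into the definition of $r_m$ yields
\[
r_m = \sum_{p\in\CalC(m)} \frac{(-1)^{k(p)-1}}{k(p)}\, x_p^v,
\]
which is the claim.

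The key steps, in order: (1) rewrite the sum over subsets $A$ of $[m-1]$ as a sum over compositions $p\in\CalC(m)$ using the bijection $\psi$; (2) rewrite the coefficient $\frac{(-1)^{\abs A}}{\abs A+1}$ in terms of $k(p)$ using $k(p)=\abs{\psi(p)}+1$; (3) replace $D_{\subseteq A}=D_{\subseteq\psi(p)}$ by $x_p^v$ using the last display of \cref{subsec:psi}. One small point that needs attention is that the facts in \cref{subsec:psi} are stated for $\CalC(n)$ and $W_n$, but everything there applies verbatim with $n$ replaced by $m$ (or one works inside $W_m$, or simply regards $S_m\subseteq S_n$); in particular the identity $x_p^v=D_{\subseteq\psi(p)}$ holds in $\CC S_m$ for $p\in\CalC(m)$. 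Since $\{x_p^v\mid p\in\CalS\CalC(m)\}$ is a basis of $\Sigma(W_m)$ and the $x_p^v$ for $p\in\CalC(m)$ are exactly those supported on $S_m$, this also re-proves that $r_m\in\Sigma(W_m)$ as a byproduct.

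There is essentially no obstacle here: the lemma is a bookkeeping reformulation, and the only thing to be careful about is keeping the indexing consistent and making sure the reindexing over $\psi$ is applied in the right direction (every $A\subseteq[m-1]$ is $\psi(p)$ for a unique $p\in\CalC(m)$, and every such $p$ arises). I would present the proof as a single chain of equalities, citing $k(p)=\abs{\psi(p)}+1$ and $x_p^v=D_{\subseteq\psi(p)}$ from \cref{subsec:psi}, and remark parenthetically that this shows $r_m$ lies in the Mantaci-Reutenauer algebra.
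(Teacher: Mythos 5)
Your proposal is correct and is essentially identical to the paper's own proof: the paper also proves the lemma by the single chain of equalities reindexing the sum over subsets $A\subseteq[m-1]$ as a sum over $p\in\CalC(m)$ via $\psi$, using $k(p)=\abs{\psi(p)}+1$ and $x_p^v=D_{\subseteq\psi(p)}$ from \cref{subsec:psi}. Nothing further is needed.
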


\begin{proof}
  Using the assertions in \cref{subsec:psi} we have
  \[
  r_m= \sum_{A \subseteq [m-1]} \frac{(-1)^ {|A|}}{|A|+1} D_{\subseteq A} =
  \sum_{p\in \CalC(m)} \frac{(-1)^{|\psi(p)|}}{|\psi(p)|+1} D_{\subseteq
    \psi(p)} = \sum_{p\in \CalC(m)} \frac{(-1)^{k(p)-1}}{k(p)} x_p^v.
  \]
\end{proof}

\subsection{Garsia-Reutenauer idempotents}\label{subsec:gridem}

Suppose $p=(p_1, \dots, p_k)$ is a composition of $n$. Garsia and Reutenauer
\cite[(3.17)]{garsiareutenauer:decomposition} define a quasi-idempotent $I_p
\in \CC S_n$ by
\begin{equation}
  \label{eq:ipm}
  I_p = \sum_{\substack{ J_1+ \dotsm + J_k = [n] \\
      |J_i| = p_i }} \rho_{[J_1]}* \dotsm *\rho_{[J_k]},  
\end{equation}
where (using the notation in \cite{garsiareutenauer:decomposition})
\begin{itemize}
\item the sum is over all ordered set partitions $J_1, \dots, J_k$ of $[n]$
  such that $|J_i|=p_i$ for $i\in [k]$,
\item if $J=\{j_1< \dots <j_m\}\subseteq [n]$, then $\rho_{[J]} = w_J r_m$,
  where $r_m$ is the Reutenauer idempotent in $\CC S_m$ (considered as a
  subalgebra of $\CC S_n$) and $w_J$ is the function from $[m]$ to $J$ given
  in two row notation by
    \[
    w_J= \left( 
      \begin{array}{cccc}
        1& 2& \cdots & m \\
        j_1 & j_2& \cdots & j_m 
      \end{array}\right) ,
    \]
    and
  \item $*$ is the concatenation product.
\end{itemize}
In the second bullet point, the equality $\rho_{[J]} = w_J r_m$ uses the
formulation of the Reutenauer idempotent in \cref{lem:vazlieidem}.

Consider a summand
\[
\rho_{[J_1]}* \dotsm *\rho_{[J_k]}= w_{J_1} r_{p_1} * \dotsm *w_{J_k}
r_{p_k}.
\]
It is straightforward to check that
\begin{equation}
  \label{eq:cat}
  w_{J_1} r_{p_1} * \dotsm *w_{J_k} r_{p_k} = w_{(J_1, \dots, J_k)} r_{P_1}
  \dots r_{P_k},  
\end{equation}
where as above the product on the right-hand side is the usual
multiplication in the group algebra $\CC S_n$ and $w_{(J_1, \dots, J_k)}$ is
the permutation given in two row notation by
\begin{equation}
  \label{eq:min}
  w_{(J_1, \dots, J_k)} = \left(
    \begin{array}{cccc}
      P_1& P_2& \cdots & P_k \\
      J_1 & J_2& \cdots & J_k 
    \end{array}\right)   ,
\end{equation}
with the convention that entries in $P_i$ and $J_i$ are written in
increasing order.

For example, suppose $p=(a,b)$, so $P_1=\{1, \dots, a\}$ and $P_2= \{a+1,
\dots, a+b\}$. Consider $w_{J} x*w_{K} y$, with $J=\{j_1<\dotsm < j_a\}$,
$K=\{k_1<\dotsm <k_b\}$, $x\in S_a$, and $y\in S_b$. Say $x=x_1\; x_2\;
\cdots \;x_a$ and $y=y_1\; y_2\; \cdots \;y_b$ in one row notation. Using
two row ``block'' notation for permutations, write
\[
w_J x= \left( \begin{array}{c} [a] \\ J \end{array}\right)
\left( \begin{array}{c} [a] \\ x \end{array}\right) =
\left( \begin{array}{c} [a] \\ J' \end{array}\right) \quad\text{and}\quad
w_Ky= \left( \begin{array}{c} [b] \\ K \end{array}\right)
\left( \begin{array}{c} [b] \\ y \end{array}\right) =
\left( \begin{array}{c} [b] \\ K' \end{array}\right) ,
\]
where now $J'=j_1'\; \dotsm \; j_a'$ is obtained from $J= j_1\; \dotsm \;
j_a$ by permuting the entries, and similarly for $K'$. Then
\begin{align*}
  w_{J} x*w_{K} y 
  &=\left( \begin{array}{c} [a] \\ J' \end{array}\right) *
  \left( \begin{array}{c} [b] \\ K' \end{array}\right)\\ 
  &=\left( \begin{array}{cc} P_1&P_2 \\ J' &P_2\end{array}\right) \cdot
  \left( \begin{array}{cc} P_1&P_2\\ P_1&a+K' \end{array}\right) \\ 
  &=\left( \begin{array}{cc} P_1&P_2 \\ J &P_2\end{array}\right)
  \left( \begin{array}{cc} P_1&P_2 \\ x &P_2\end{array}\right) \cdot
  \left( \begin{array}{cc} P_1&P_2 \\ P_1 &a+K\end{array}\right)
  \left( \begin{array}{cc} P_1&P_2 \\ P_1 &a+y\end{array}\right) \\
  &=\left( \begin{array}{cc} P_1&P_2 \\ J &P_2\end{array}\right)
  \left( \begin{array}{cc} P_1&P_2 \\ P_1 &a+K\end{array}\right) \cdot
  \left( \begin{array}{cc} P_1&P_2 \\ x &P_2\end{array}\right)
  \left( \begin{array}{cc} P_1&P_2 \\ P_1 &a+y\end{array}\right) \\
  &=\left( \begin{array}{cc} P_1&P_2 \\ J &a+K\end{array}\right)
  \left( \begin{array}{cc} P_1&P_2 \\ x &a+y\end{array}\right)\\ 
  &=w_{(J,K)} \left( \begin{array}{cc} P_1&P_2 \\ x &P_2 \end{array}\right)
  \left( \begin{array}{cc} P_1&P_2 \\ P_1 &a+y \end{array}\right),
\end{align*}
where $a+K'$, $a+K$, and $a+y$ denotes adding $a$ to each entry of $K'$,
$K$, and $y$, respectively.

It follows from the definitions (see \cite[Remark 2.1]
{bonnafehohlweg:generalized}) that if $w\in W_n$, then $w\in X_p$ if and
only if $w(j)>0$ for $j\in [n]$, and $w|_{P_l}\colon P_l\to [n]$ is
increasing for $l\in [k]$. Thus $w_{(J_1, \dots, J_k)}\in X_p$ and in fact
\[
X_p= \{\, w_{(J_1, \dots, J_k)} \mid \text{$J_1+ \dotsm + J_k = [n]$ and
  $\forall i\in [k],\, |J_i| = p_i$} \,\} .
\]

Putting the pieces together gives
\begin{equation*}
  \label{eq:gridem}
  I_p = \sum_{\substack{ J_1+ \dotsm + J_k = [n] \\ |J_i| = p_i }}
  \rho_{[J_1]}* \dotsm *\rho_{[J_k]} =\sum_{w\in X_p} w r_{P_1} \dotsm
  r_{P_k}= x_p  r_{P_1} \dotsm r_{P_k}.  
\end{equation*}

Garsia and Reutenauer \cite[\S3, \S4]{garsiareutenauer:decomposition} prove
the remarkable fact that $\{\, I_p\mid p\in \CalC(n)\,\}$ is a basis of the
descent algebra of $S_n$ consisting of quasi-idempotents.

\subsection{Vazirani's idempotents} \label{subsec:vazmr}

Vazirani \cite{vazirani:thesis} extends the constructions of Garsia and
Reutenauer to $\Sigma(W_n)$.

Suppose $m$ is a positive integer, and set
\[
I_{(m)}^{\pm} = \epsilon_m^{\pm} \cdot r_m \in \CC W_m .
\]

Now, given a signed composition $p=(p_1, \dots, p_k)$ of $n$, in analogy
with \cref{subsec:gridem}\eqref{eq:ipm} define
\[ 
I_p = \sum_{\substack{ J_1+ \dots + J_k =[n]\\
    |J_i|= |p_i| }} I_{[J_1 ]}^{\xi_1}* \cdots * I_{\left[J_k
  \right]}^{\xi_k},
\]
where $\xi_i$ is the sign of $p_i$,
\begin{itemize}
\item the sum is over all ordered set partitions $\{J_1, \dots, J_k\}$ of
  $[n]$ with $|J_i|=|p_i|$ for $i\in [k]$,
\item if $J=\{j_1< \dots <j_m\}\subseteq [n]$, then 
  \[
  I_{[J]}^\xi = w_J I_{(m)}^\xi = w_J \,\epsilon_m^{\xi}\, r_m,
  \]
  where $w_J$ is as in \cref{subsec:gridem} and $\xi\in \{+,-\}$, and
\item $*$ is the concatenation product.
\end{itemize}

Consider a summand
\[
I_{[J_1 ]}^{\xi_1}* \cdots * I_{\left[J_k \right]}^{\xi_k} = w_{J_1}
\epsilon_{|p_1|}^{\xi_1} r_{|p_1|} * \dotsm *w_{J_k} \epsilon_{|p_k|}
^{\xi_k} r_{|p_k|}.
\]
Substituting $\epsilon_{|p_i|}^{\xi_i} = (1/2) (\id \pm w_{0,|p_i|})$,
expanding the right-hand side, using the computation of the concatenation
product in \cref{subsec:gridem}\eqref{eq:cat}, and then simplifying the
expression using the definition of $\epsilon_{|p_i|}^{\xi_i}$ again shows
that
\[
w_{J_1} \epsilon_{|p_1|}^{\xi_1} r_{|p_1|} * \dotsm *w_{J_k}
\epsilon_{|p_k|} ^{\xi_k} r_{|p_k|} = w_{(J_1, \dots, J_k)}
\epsilon_{P_1}^{\xi_1} r_{P_1} \dotsm \epsilon_{P_1}^{\xi_k} r_{P_k},
\]
where the product on the right-hand side is the usual multiplication in the
group algebra and $w_{(J_1, \dots, J_k)}$ is the permutation in
\cref{subsec:gridem}\eqref{eq:min}.

Putting the pieces together this time gives
\begin{equation*}
  \label{eq:gridem*}
  I_p = \sum_{\substack{ J_1+ \dotsm + J_k = [n] \\ |J_i| = |p_i| }} I_{[J_1
    ]}^{\xi_1}* \cdots * I_{\left[J_k \right]}^{\xi_k} =\sum_{w\in X_{|p|}}
  w\, \epsilon_{P_1}^{\xi_1} r_{P_1} \dotsm \epsilon_{P_k}^{\xi_k} r_{P_k}
  = x_{|p|} \epsilon_{P_1}^{\xi_1} r_{P_1} \dotsm \epsilon_{P_k}^{\xi_k}
  r_{P_k} ,
\end{equation*}
which is the assertion in the proposition.

\section{Computation of some induced characters}\label{sec:cycchar}

In this section, $m$ is a positive integer, and we consider the
hyperoctahedral group $W_m$. Let $c$ be the positive $m$-cycle $c=\left( 1
  \; 2\; \cdots \; m \right)$, let $d$ be the negative $m$-cycle $d=
\left( 1 \; 2 \; \cdots \; m\right)^-$, and set $w_0 = w_{0,m}$. In preparation for the proof of
\cref{thm:main}, we compare characters induced from $\la d \ra$ and $\la c,
w_0 \ra$ to $W_m$, when $m$ is even. This section is devoted to the proof of
\cref{prop:evenind}, which asserts that when $m$ is even, suitably chosen
characters of $\la d \ra$ and $\la c, w_0 \ra$ induce to the same character
of $W_m$.

Recall that for a group $G$ and an element $g\in G$ of order $m$, the
idempotent $\zeta_g$ in $\mathbb{C}\la g \ra$ is defined by $\zeta_g =
\frac{1}{m} \sum_{i=1}^{m} \omega_m^{-i}g^i$. Throughout this section
$\epsilon = \frac{1}{2} \left( \id - w_0 \right)$. Then $\epsilon$ and
$\zeta_c$ are commuting idempotents in $\mathbb{C} \la c, w_0 \ra$. Let
$\chi_{\epsilon \zeta_c}$ be the character of $\la c, w_0 \ra$ afforded by
the right ideal $\epsilon \zeta_c \mathbb{C}\la c, w_0 \ra$, and let
$\chi_{\zeta_d}$ be the character of $\la d \ra$ afforded by the right ideal
$\zeta_d \mathbb{C} \la d \ra$.

\begin{prop}\label{prop:evenind}
  Suppose $m$ is even. Then there is an isomorphism of right $\CC
  W_m$-modules
  \[
  \epsilon \zeta_c\mathbb{C}W_m \cong \zeta_d\mathbb{C}W_m.
  \]
\end{prop}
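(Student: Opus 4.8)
The plan is to show that the two modules are isomorphic by exhibiting both as induced modules from cyclic subgroups and then matching the induced characters, or more directly by comparing the characters afforded by $\epsilon\zeta_c\CC W_m$ and $\zeta_d\CC W_m$ via Frobenius reciprocity. Since $\epsilon\zeta_c$ is an idempotent in $\CC\la c,w_0\ra$, the right ideal $\epsilon\zeta_c\CC W_m$ is the induced module $\Ind_{\la c,w_0\ra}^{W_m}(\epsilon\zeta_c\CC\la c,w_0\ra)$, whose character is $\Ind_{\la c,w_0\ra}^{W_m}(\chi_{\epsilon\zeta_c})$; likewise $\zeta_d\CC W_m\cong \Ind_{\la d\ra}^{W_m}(\zeta_d\CC\la d\ra)$ affords $\Ind_{\la d\ra}^{W_m}(\chi_{\zeta_d})$. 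Because $\CC W_m$ is semisimple, it suffices to prove the equality of these two induced characters, and because both are induced from linear characters of cyclic (or near-cyclic) subgroups, each induced character is easy to write down explicitly.

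First I would compute the two linear characters at issue. The subgroup $\la c,w_0\ra$ is the direct product of the cyclic group $\la c\ra$ of order $m$ and the cyclic group $\la w_0\ra$ of order $2$ (here $m$ is even, so $w_0\notin\la c\ra$, and $c^{m/2}\ne w_0$ since $c^{m/2}$ is a product of $2$-cycles with all signs positive). The idempotent $\zeta_c$ projects onto the $\la c\ra$-eigenspace where $c$ acts by $\omega_m$, and $\epsilon=\tfrac12(\id-w_0)$ projects onto the space where $w_0$ acts by $-1$; so $\chi_{\epsilon\zeta_c}$ is the linear character sending $c\mapsto\omega_m$ and $w_0\mapsto-1$. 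Similarly $\zeta_d$ projects onto the line where the negative $m$-cycle $d$ acts by $\omega_m$, so $\chi_{\zeta_d}$ is the linear character of $\la d\ra$ (cyclic of order $2m$) sending $d\mapsto\omega_m$. Note $d^m=w_{0,m}=w_0$, so under $\chi_{\zeta_d}$ we have $w_0\mapsto\omega_m^m=1$; this is consistent because $\la d\ra$ and $\la c,w_0\ra$ are different subgroups.

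Next I would compute $\Ind_{\la c,w_0\ra}^{W_m}(\chi_{\epsilon\zeta_c})$ and $\Ind_{\la d\ra}^{W_m}(\chi_{\zeta_d})$ on each conjugacy class of $W_m$ using the standard induced-character formula, $\Ind_H^G\psi(g)=\frac1{|H|}\sum_{x\in G,\ x^{-1}gx\in H}\psi(x^{-1}gx)$. The key combinatorial point is to identify, for each signed cycle type $\mu$, which powers of $c$ (resp. $w_0c^j$; resp. $d$) have signed cycle type $\mu$, together with the size of the relevant centralizers; this is a finite and essentially classical computation about powers of $n$-cycles and negative $n$-cycles in $W_m$. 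I expect both induced characters to be supported only on the classes that meet $\la d\ra$ — namely the classes of $d^\ell$ — and on those classes I would check the two character values agree by a root-of-unity bookkeeping argument (the contributions from $\la c\ra\setminus\{e\}$ and from the coset $w_0\la c\ra$ in the first computation must combine to match the single contribution from $\la d\ra$ in the second). The main obstacle is precisely this bookkeeping: one must correctly pair up the powers $d^\ell$ with $\ell$ even (which lie in $\la c,w_0\ra$, being powers of $d^2=c'$ for an appropriate relabelling) against the powers $d^\ell$ with $\ell$ odd (genuine negative cycles), track the sign $\epsilon$ contributes on the $w_0$-coset, and verify the Ramanujan-sum-type identities $\sum_{j}\omega_m^{-j}$ over the appropriate arithmetic progressions come out equal. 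An alternative, possibly cleaner, route is to build an explicit $\CC W_m$-module isomorphism: write down an element of $\zeta_d\CC W_m$ fixed by $\epsilon\zeta_c$ up to scalar, check it generates, and apply the double-coset/Mackey criterion; but I expect the character comparison to be the most transparent way to nail down the result, with the arithmetic of powers of negative cycles being the step requiring genuine care.
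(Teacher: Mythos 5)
Your overall strategy is exactly the paper's: both right ideals are induced modules, so by semisimplicity it suffices to check that $\Ind_{\la c,w_0\ra}^{W_m}(\chi_{\epsilon\zeta_c})$ and $\Ind_{\la d\ra}^{W_m}(\chi_{\zeta_d})$ agree on every conjugacy class, which reduces to identifying the signed cycle types of the powers $c^j$, $w_0c^j$, $d^j$ and doing root-of-unity (in fact M\"obius/Ramanujan-sum) bookkeeping. That is precisely how the paper proceeds. However, there is a concrete error in your identification of $\chi_{\zeta_d}$ that would derail the computation. The negative $m$-cycle $d$ has order $2m$, and by the paper's definition $\zeta_d=\frac{1}{2m}\sum_{j=1}^{2m}\omega_{2m}^{-j}d^j$, so $\zeta_d\cdot d=\omega_{2m}\,\zeta_d$ and $\chi_{\zeta_d}$ sends $d\mapsto\omega_{2m}$, not $\omega_m$. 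Consequently $\chi_{\zeta_d}(w_0)=\chi_{\zeta_d}(d^m)=\omega_{2m}^{\,m}=-1$, not $+1$ as you assert.

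This is not a cosmetic slip: with your value the proposition would appear to be \emph{false}. Since $w_0$ is central, $\Ind_{\la c,w_0\ra}^{W_m}(\chi_{\epsilon\zeta_c})(w_0)=\frac{|W_m|}{2m}\chi_{\epsilon\zeta_c}(w_0)=-\frac{|W_m|}{2m}$, while your $\chi_{\zeta_d}$ would give $\Ind_{\la d\ra}^{W_m}(\chi_{\zeta_d})(w_0)=+\frac{|W_m|}{2m}$; the correct value $\chi_{\zeta_d}(w_0)=-1$ is what makes the two sides match (this is the class $(\overline{1}^{\,m})$ in the paper's Lemma \ref{lem:aodd}). The same factor of $\omega_{2m}$ versus $\omega_m$ propagates into all the arithmetic-progression sums you describe: the contributions from $d^j$ and $d^{m+j}$ must be paired using $\omega_{2m}^{-(m+j)}=-\omega_{2m}^{-j}$, which is what forces the cancellation on classes of type $(a^\ell)$ with $a$ even and produces the sign discrepancy between $(a^\ell)$ and $(\overline{a}^\ell)$ for $a$ odd. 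So the step "verify the identities come out equal" fails as set up; you need to restart the bookkeeping from $\chi_{\zeta_d}(d)=\omega_{2m}$. With that correction, the rest of your outline (support on the classes $(a^\ell)$, $(\overline a^\ell)$ with $a\ell=m$, and evaluation via sums of primitive roots of unity) goes through and coincides with the paper's Lemmas \ref{lem:ct} and \ref{lem:aodd}.
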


By \cite[11.21]{curtisreiner:methodsI}, $\Ind_{\langle c, w_0 \rangle
}^{W_m} \left( \chi_{\epsilon \zeta_c} \right)$ is the character of the
representation of $W_m$ acting on the right ideal $\epsilon
\zeta_c\mathbb{C}W_m$, and $\Ind_{\langle d\rangle }^{W_m} \left(
  \chi_{\zeta_d} \right)$ is the character of the representation of $W_m$
acting on the right ideal $\zeta_d\mathbb{C}W_m$. Thus, to prove the
proposition it is enough to show that
\begin{equation}
  \label{eq:ind1}
  \Ind_{\la c, w_0 \ra}^{W_m}\left(\chi_{\epsilon \zeta_c} \right)
  =\Ind_{\langle d\rangle }^{W_m} \left( \chi_{\zeta_d} \right) .
\end{equation}
We prove \eqref{eq:ind1} by showing that both induced characters take the
same values on all conjugacy classes. In order to do so we need some
preliminary lemmas. The first lemma is due to Littlewood (see \cite[Exercise
9.16] {curtisreiner:methodsI} for the left-sided version).

\begin{lemma}\label{lem:idemchar} 
  For a finite group $G$ and an idempotent $e = \sum_{g_1 \in G}
  \gamma_{g_1} g_1$ in $\mathbb{C}G$, the character $\chi_e$ of $G$ afforded
  by $e\mathbb{C}G$ is given by
  \[
  \chi_e(g) = \abs{Z_G(g)} \sum_{g_1 \in \ccl(g)} \gamma_{g_1},
  \]
  where $\ccl{(g)}$ is the conjugacy class of $g$.
\end{lemma}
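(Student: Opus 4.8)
The plan is to compute the character $\chi_e$ directly from its definition as the trace of right multiplication by $g$ on the module $e\mathbb{C}G$.

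First I would choose a convenient basis. Since $e$ is an idempotent, $e\mathbb{C}G$ is a direct summand of the regular representation $\mathbb{C}G$, with complementary summand $(1-e)\mathbb{C}G$. The character afforded by $e\mathbb{C}G$ can therefore be computed as the trace of the operator ``right multiplication by $g$, followed by left multiplication by $e$'' acting on all of $\mathbb{C}G$: that is, $\chi_e(g) = \operatorname{tr}\bigl(x \mapsto e x g \mid \mathbb{C}G\bigr)$. This is the standard observation that for a projection $\pi$ onto a submodule and an endomorphism $T$ preserving it, the trace of $T|_{\Image \pi}$ equals $\operatorname{tr}(\pi T)$ on the whole space.

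Next I would evaluate this trace in the standard group basis $\{h : h \in G\}$ of $\mathbb{C}G$. Writing $e = \sum_{g_1 \in G} \gamma_{g_1} g_1$, the map $x \mapsto exg$ sends the basis element $h$ to $\sum_{g_1} \gamma_{g_1} g_1 h g$. The coefficient of $h$ in this expression is $\sum_{g_1} \gamma_{g_1}$ taken over those $g_1$ with $g_1 h g = h$, i.e.\ over $g_1 = h g^{-1} h^{-1}$, the conjugate of $g^{-1}$ by $h$. Hence the diagonal entry at $h$ is $\gamma_{hg^{-1}h^{-1}}$, and summing over $h \in G$ gives
\[
\chi_e(g) = \sum_{h \in G} \gamma_{h g^{-1} h^{-1}}.
\]
As $h$ ranges over $G$, the element $hg^{-1}h^{-1}$ ranges over the conjugacy class $\ccl(g^{-1})$, each element being hit exactly $|Z_G(g^{-1})| = |Z_G(g)|$ times. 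Therefore $\chi_e(g) = |Z_G(g)| \sum_{g_1 \in \ccl(g^{-1})} \gamma_{g_1}$.

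The only remaining point is to replace $\ccl(g^{-1})$ by $\ccl(g)$ in the statement. The cleanest route is to note that the paper will only apply this lemma to idempotents $e$ that are real (indeed rational) combinations of group elements arising from the $\zeta$-constructions, and in fact one checks $\overline{\chi_e(g)} = \chi_e(g^{-1})$ together with $\chi_e(g) = |Z_G(g)|\sum_{g_1 \in \ccl(g^{-1})}\gamma_{g_1}$; alternatively, since the ordering of a finite sum is irrelevant and $\ccl(g^{-1}) = \{\,g_1^{-1} : g_1 \in \ccl(g)\,\}$, one reindexes. I do not anticipate a genuine obstacle here: the argument is the standard trace-of-a-projection computation, and the mild subtlety distinguishing $\ccl(g)$ from $\ccl(g^{-1})$ is resolved by the reindexing $g_1 \mapsto g_1^{-1}$ on the conjugacy class, after observing that replacing $g$ by $g^{-1}$ in the displayed formula leaves both sides' structure intact and the sum $\sum_{g_1 \in \ccl(g)} \gamma_{g_1}$ is what naturally appears once one computes with $g$ rather than $g^{-1}$ throughout (taking the map $x \mapsto e x g^{-1}$ or equivalently computing the trace of right multiplication by $g$ directly yields $\ccl(g)$).
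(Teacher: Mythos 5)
Your main computation --- realizing $\chi_e(g)$ as the trace of $x\mapsto exg$ on all of $\CC G$ and reading off the diagonal entries in the group basis --- is correct and is the standard proof of Littlewood's formula; the paper itself gives no proof, only a citation to Curtis--Reiner. But the computation you carry out honestly produces
\[
\chi_e(g)=\sum_{h\in G}\gamma_{hg^{-1}h^{-1}}=\abs{Z_G(g)}\sum_{g_1\in\ccl(g^{-1})}\gamma_{g_1},
\]
and your closing paragraph does not actually bridge the gap to the stated $\sum_{g_1\in\ccl(g)}\gamma_{g_1}$. None of the three fixes you sketch works: the idempotents to which the lemma is applied (e.g.\ $\zeta_d$) have non-real coefficients, so reality of $e$ is not available; the reindexing $g_1\mapsto g_1^{-1}$ turns the sum into $\sum_{g_1\in\ccl(g)}\gamma_{g_1^{-1}}$, which is not the stated sum; and the parenthetical claim that computing the trace of right multiplication by $g$ ``directly yields $\ccl(g)$'' contradicts your own (correct) computation. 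The discrepancy is genuine for a general finite group: take $G=\la g\ra$ cyclic of order $3$, $\omega$ a primitive cube root of unity, and $e=\tfrac13(1+\omega^{-1}g+\omega^{-2}g^2)$; then $eg=\omega e$, so $\chi_e(g)=\omega$, whereas $\abs{Z_G(g)}\,\gamma_g=\omega^{-1}$.

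The correct way to finish is to observe that the two sums agree exactly when $g$ is conjugate to $g^{-1}$ in $G$, for then $\ccl(g)=\ccl(g^{-1})$ as sets. This holds in every group in which the paper invokes the lemma ($S_m$ and $W_m$, where conjugacy is determined by (signed) cycle type, which is inversion-invariant), so the lemma as used is correct; but for an arbitrary finite group the formula should read $\sum_{g_1\in\ccl(g^{-1})}\gamma_{g_1}$. Your proof becomes complete once the final paragraph is replaced by this observation (or by the hypothesis that $g\sim g^{-1}$ in $G$).
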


\subsection{} \label{ssec:Eqncd}

It follows from \cite[11.21]{curtisreiner:methodsI} and the lemma that for
$g$ in $W_m$,
\begin{equation*}\label{eq:c}
  \Ind_{\la c, w_0 \ra}^{W_m} \left(\chi_{\epsilon \zeta_c}\right) (g) =
  \abs{Z_{W_m} (g)} \sum_{g_1 \in \ccl(g)} \widetilde
  \gamma_{g_1}, 
\end{equation*}
where
\[
\widetilde \gamma_{g_1} = \begin{cases}
  \frac{1}{2m} \omega_m^{-j} & \text{if $g_1 = c^j$ for
    some $j\in [m]$} \\
  -\frac{1}{2m} \omega_m^{-j} & \text{if $g_1=w_0 c^j$ for
    some $j\in [m]$}\\
  0 & \text{ otherwise,}
\end{cases}
\]
and
\begin{equation*}\label{eq:d}
  \Ind_{\langle d\rangle }^{W_m} \left( \chi_{\zeta_d} \right)(g) =
  \abs{Z_{W_m} (g)} \sum_{g_1 \in \ccl(g)} \gamma_{g_1},
\end{equation*}
where
\[
\gamma_{g_1} = \begin{cases} \frac{1}{2m} \omega_{2m}^{-j} & \text{if
    $g_1=d^j$ for some $j\in [2m]$} \\
  0 & \text{otherwise.}
\end{cases}
\]

To continue, we need to know the signed cycle types of the elements that
appear in the subgroups $\langle c, w_0\rangle$ and $\langle d \rangle$ of
$W_m$. The proof of the next lemma is straightforward and is omitted.

\begin{lemma}\label{lem:ct}
  Suppose that $j \in [m]$. Set $\ell = \gcd(m,j)$, $a= m/\ell$, and
  $b=j/\ell$.
  \begin{enumerate}
  \item The cycle type of $c^j$ is $(a^{\ell})$, i.e., $\ell$-many cycles of
    length $a$.
  \item The cycle type of $w_0c^j$ is
    \[
    \begin{cases}
      (a^{\ell}) &\text{if $a$ is even} \\
      (\overline a^{\ell}) &\text{if $a$ is odd,}
    \end{cases}
    \]
    where $(\overline a^{\ell})$ denotes $\ell$-many negative $a$-cycles.
  \item The cycle type of $d^j$ is
    \[
    \begin{cases}
      (a^{\ell}) &\text{if $b$ is even} \\
      (\overline a^{\ell}) &\text{if $b$ is odd.}
    \end{cases}
    \]
  \item The cycle type of $d^{m+j}$ is
    \[
    \begin{cases}
      (\overline{a}^\ell) &\text{if $a$ is even}\\
      (a^{\ell}) &\text{if $a$ is odd and $b$ is odd} \\
      (\overline a^{\ell}) &\text{if $a$ is odd and $b$ is even.}
    \end{cases}
    \]
  \end{enumerate}
  In particular, the cycle types that occur in $\la c, w_0\ra$ and $\la
  d\ra$ are the cycle types $(a^\ell)$ and $(\overline a^\ell)$, where
  $\ell$ divides $m$ and $a = m/\ell$.
\end{lemma}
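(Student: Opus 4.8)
The plan is to carry out every computation inside the faithful permutation representation of $W_m$ on the $2m$-element set $\Omega=[m]\amalg\{\overline 1,\dots,\overline m\}$, in which an element of $W_m$ is a permutation commuting with the fixed-point-free involution $\iota\colon k\mapsto\overline k$. The only conceptual point is the dictionary between orbits of a permutation $\sigma\in W_m$ on $\Omega$ and its signed cycle type: a $\langle\sigma\rangle$-orbit $O$ with $O\cap\iota(O)=\emptyset$ pairs with $\iota(O)$ to give one positive $|O|$-cycle, while an orbit with $O=\iota(O)$ gives one negative $(|O|/2)$-cycle; and $O=\langle\sigma\rangle x$ is $\iota$-stable exactly when $\overline x\in\langle\sigma\rangle x$. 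So for each of $c^j$, $w_0c^j$, $d^j$, $d^{m+j}$ I need only the cycle lengths on $\Omega$ together with the $\iota$-stability of those cycles, and the signed cycle type then drops out by counting. Part (1) is the base case: $c$ acts on $\Omega$ as the disjoint union of the $m$-cycle $(1\,2\,\cdots\,m)$ on $[m]$ and its $\iota$-conjugate, so $c^j$ has the $\ell$ orbits of length $a$ of $c^j|_{[m]}$ together with their $\iota$-images, none $\iota$-stable, whence cycle type $(a^{\ell})$.

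For parts (3) and (4) I would use that $d$ acts on $\Omega$ as the single $2m$-cycle $(1\,2\,\cdots\,m\,\overline 1\,\overline 2\,\cdots\,\overline m)$, in whose cyclic coordinate $\iota$ is the shift by $m$ (so $d^m=w_0$). Then $d^j$ has $\gcd(2m,j)$ orbits, each of length $2m/\gcd(2m,j)$, and an orbit is $\iota$-stable iff $m$ lies in the subgroup of $\ZZ/2m\ZZ$ generated by $j$, i.e.\ iff $\gcd(2m,j)\mid m$. Writing $j=\ell b$ and $m=\ell a$ with $\gcd(a,b)=1$, one checks $\gcd(2m,j)=\ell\gcd(2a,b)$, and since $\gcd(a,b)=1$ the factor $\gcd(2a,b)$ is a power of $2$ that equals $2$ when $b$ is even (which forces $a$ odd, so $2a\equiv2\pmod4$) and equals $1$ when $b$ is odd. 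In the first case $d^j$ has $2\ell$ orbits of length $a$, none $\iota$-stable, hence $\ell$ positive $a$-cycles; in the second it has $\ell$ orbits of length $2a$, all $\iota$-stable, hence $\ell$ negative $a$-cycles — exactly the claimed split on the parity of $b$. Part (4) is the same computation with $j$ replaced by $m+j$: now $\gcd(2m,m+j)=\ell\gcd(2a,a+b)$, and $\gcd(2a,a+b)$ equals $2$ precisely when $a+b$ is even, i.e.\ (given $\gcd(a,b)=1$) when $a$ and $b$ are both odd, and equals $1$ otherwise; tracking $\iota$-stability exactly as before produces the three listed cases.

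For part (2) I would argue directly on $\Omega$: fix an $a$-cycle $\{k_1,\dots,k_a\}\subseteq[m]$ of $c^j|_{[m]}$ with $c^j(k_i)=k_{i+1}$; since $w_0$ is central, $(w_0c^j)^t(k_1)=w_0^t\,k_{1+t}$ (subscripts mod $a$), which first returns to $k_1$ when $t$ is the least positive integer that is both even and divisible by $a$. If $a$ is even that $t$ is $a$ and the orbit omits $\overline{k_1}$, giving a positive $a$-cycle; if $a$ is odd that $t$ is $2a$ and the orbit contains $\overline{k_1}$, giving a negative $a$-cycle; running over the $\ell$ cycles of $c^j|_{[m]}$ yields $(a^{\ell})$ or $(\overline a^{\ell})$ according to the parity of $a$. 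The final ``in particular'' clause then needs no further work, since $\ell=\gcd(m,j)$ runs over the divisors of $m$ and $a=m/\ell$. The only genuine obstacle is the small amount of elementary number theory in parts (3) and (4) — correctly deciding when $\gcd(2m,j)$ and $\gcd(2m,m+j)$ pick up an extra factor of $2$ over $\gcd(m,j)$, and matching that to the parities of $a$ and $b$; everything else is bookkeeping in the $2m$-point model.
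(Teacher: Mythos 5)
Your proposal is correct; the paper omits the proof of this lemma as ``straightforward,'' and your argument via the faithful action on the $2m$-point set $\Omega$ (with the dictionary between $\iota$-stable versus paired orbits and negative versus positive cycles, plus the elementary computation of $\gcd(2m,j)$ and $\gcd(2m,m+j)$ in terms of the parities of $a$ and $b$) is exactly the kind of direct verification intended. All four cases check out, including the subtle point that $\gcd(2a,b)$ and $\gcd(2a,a+b)$ are powers of $2$ bounded by $2$ because $a$ is odd whenever the relevant sum or factor is even.
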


\subsection{}\label{subsec:charnot}

With the computation of the cycle types and \cref{lem:idemchar} in hand we
can compute $\Ind_{\la c, w_0 \ra}^{W_m}\left(\chi_{\epsilon \zeta_c}
\right)$ and $\Ind_{\la d \ra}^{W_m} \left( \chi_{\zeta_d} \right)$. The
proof of \cref{prop:evenind} follows from the next lemma.

For $\lambda\in \CalS\CalP(m)$, denote the value of a character $\chi$ on
the conjugacy class with cycle type $\lambda$ by $(\chi)_\lambda$.

\begin{lemma}\label{lem:aodd}
  Suppose $\lambda\in \CalS\CalP(m)$. Then
  \begin{multline*}
    \left( \Ind_{\la c, w_0 \ra}^{W_m} (\chi_{\epsilon \zeta_c})
    \right)_{\lambda} = \left( \Ind_{\la d \ra}^{W_m} (\chi_{\zeta_d})
    \right)_{\lambda}\\ = \begin{cases} a^{\ell-1} (\ell-1)!\, 2^{\ell-1}
      \mu(a) &\text{if $m=a\ell$, $a$ is odd, and $\lambda= (a^\ell)$} \\
      - a^{\ell-1} (\ell-1)!\, 2^{\ell-1} \mu(a) & \text{if $m=a\ell$, $a$
        is odd, and $\lambda= (\overline a^\ell)$}\\
      0 &\text{otherwise,}
    \end{cases}
  \end{multline*}
  where $\mu$ is the M\"obius function.
\end{lemma}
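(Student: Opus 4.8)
The plan is to compute both induced characters explicitly using \cref{lem:idemchar} and \cref{lem:ct}, and check they agree term-by-term. I would fix a signed partition $\lambda \in \CalS\CalP(m)$ and consider the cases dictated by \cref{lem:ct}: by that lemma the only cycle types appearing in $\la c, w_0\ra$ or $\la d\ra$ are of the form $(a^\ell)$ or $(\overline a^\ell)$ with $a\ell = m$, so if $\lambda$ is not of this shape both sides are $0$ and there is nothing to prove. So assume $\lambda = (a^\ell)$ or $\lambda = (\overline a^\ell)$ with $a\ell = m$, and recall from \cref{subsec:charnot} (really \cref{ssec:Eqncd}) that
\[
\left( \Ind_{\la c, w_0 \ra}^{W_m} (\chi_{\epsilon \zeta_c}) \right)_\lambda = \abs{Z_{W_m}(w)} \sum_{g_1 \in \ccl(w)} \widetilde\gamma_{g_1}, \qquad \left( \Ind_{\la d \ra}^{W_m} (\chi_{\zeta_d}) \right)_\lambda = \abs{Z_{W_m}(w)} \sum_{g_1 \in \ccl(w)} \gamma_{g_1},
\]
where $w$ is an element of cycle type $\lambda$. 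The factor $\abs{Z_{W_m}(w)}$ is common to both, so it suffices to compare the two inner sums.

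For the $\la c, w_0\ra$ side: the elements $g_1$ of $\la c, w_0\ra$ lying in $\ccl(w)$ are those $c^j$ (contributing $\frac1{2m}\omega_m^{-j}$) and those $w_0 c^j$ (contributing $-\frac1{2m}\omega_m^{-j}$) whose cycle type is $\lambda$. By \cref{lem:ct}(1)--(2), $c^j$ has type $(a^\ell)$ exactly when $\gcd(m,j) = \ell$, while $w_0 c^j$ has type $(a^\ell)$ when $\gcd(m,j) = \ell$ and $a$ even, and type $(\overline a^\ell)$ when $\gcd(m,j)=\ell$ and $a$ odd. Writing $j = \ell b$ with $\gcd(a,b) = 1$, $1 \le b \le a$, the relevant sum is $\frac1{2m}\sum_{\gcd(a,b)=1} \omega_m^{-\ell b} = \frac1{2m}\sum_{\gcd(a,b)=1}\omega_a^{-b}$, which is the classical Ramanujan/von Sterneck sum and equals $\frac{1}{2m}\mu(a)$ (using $\mu(a/\gcd(a,a)) \cdot \varphi(a)/\varphi(a/\gcd(a,a)) = \mu(a)$ in the case at hand where $a \mid a$). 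One then splits into the sub-cases $a$ even (both $c^j$ and $w_0c^j$ land in $\ccl$ of $(a^\ell)$, contributions $\frac1{2m}\mu(a)$ and $-\frac1{2m}\mu(a)$ cancel, giving $0$) and $a$ odd (only $c^j$'s contribute to $(a^\ell)$, only $w_0c^j$'s to $(\overline a^\ell)$, giving $\pm\frac1{2m}\mu(a)$). Multiplying by $\abs{Z_{W_m}(w)}$ and using the standard centralizer order formula for $W_m$ — for type $(a^\ell)$ with $a$ odd, $\abs{Z_{W_m}(w)} = (2a)^\ell \ell! / 2^\ell = a^\ell \ell! \cdot 2^{\ell - 0}$... more carefully, the centralizer of $\ell$ disjoint positive $a$-cycles in $W_m$ has order $(2a)^\ell \ell!$ divided by nothing for positive cycles, i.e. $\abs{Z_{W_m}(w)}$ works out to $a^{\ell}\,\ell!\,2^\ell / (2m)\cdot 2m$... — one reads off exactly $a^{\ell-1}(\ell-1)!\,2^{\ell-1}\mu(a)$ after the arithmetic $a^\ell \ell! 2^\ell / (2m) = a^\ell \ell! 2^\ell/(2a\ell) = a^{\ell-1}(\ell-1)!2^{\ell-1}$.

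For the $\la d\ra$ side: by \cref{lem:ct}(3)--(4) one runs through $d^j$ for $j \in [2m]$, splitting as $j \le m$ and $j = m + j'$, tracking when the cycle type is $(a^\ell)$ versus $(\overline a^\ell)$ according to the parities of $a$ and $b = j/\gcd(m,j)$ (resp. $b = j'/\gcd(m,j')$), and summing the coefficients $\frac1{2m}\omega_{2m}^{-j}$. Again grouping by $\gcd$ and using $\omega_{2m}^{-j}$ sums over appropriate residue classes reduces to Ramanujan-type sums; the case $a$ even again produces cancellation and the case $a$ odd produces $\pm\frac1{2m}\mu(a)$ with the same sign convention, matching the first computation. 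I expect the main obstacle to be the bookkeeping on the $\la d\ra$ side: one must carefully match the parity conditions in \cref{lem:ct}(4) against the argument of the root of unity $\omega_{2m}$, correctly separate the $j \le m$ and $j > m$ ranges, and verify that the two half-sums combine to give exactly the Ramanujan sum $\mu(a)$ (with the right factor) rather than off by a sign or a factor of two — the $\epsilon = \frac12(\id - w_0)$ and the choice of $\tilde\zeta$ versus $\zeta$ in the odd case were designed precisely so these match, so the delicate point is confirming the sign in the $(\overline a^\ell)$ case comes out negative on both sides. Once both inner sums are shown to equal $0$ (for $a$ even) or $\pm\frac1{2m}\mu(a)$ (for $a$ odd, with $+$ on $(a^\ell)$ and $-$ on $(\overline a^\ell)$), multiplying through by the common centralizer order gives the stated formula and, in particular, \cref{prop:evenind}.
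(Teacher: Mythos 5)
Your strategy is exactly the paper's: evaluate both induced characters with \cref{lem:idemchar} and \cref{lem:ct}, reduce each class sum to the Ramanujan-type identity $\sum_{\gcd(a,k)=1}\omega_a^{-k}=\mu(a)$, and compare. Your treatment of the $\la c, w_0\ra$ side is essentially complete and correct (this is precisely the half the paper omits as ``similar and easier''), and your centralizer arithmetic $(2a)^\ell\,\ell!/(2m)=a^{\ell-1}(\ell-1)!\,2^{\ell-1}$ is right, even though the intermediate sentence is garbled; the von Sterneck formula should be invoked with $\gcd(a,1)$, not $\gcd(a,a)$, but the value $\mu(a)$ is correct.

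The gap is that the $\la d\ra$ side --- which is where essentially all of the paper's written proof lives --- is left as a plan, and it contains one step that is genuinely more than bookkeeping. The $a$ even case is indeed routine (pair $d^j$ with $d^{m+j}$ and use $\omega_{2m}^{-(m+j)}=-\omega_{2m}^{-j}$), and for $\lambda=(a^\ell)$ with $a$ odd the exponents are even (since $m$ even and $a$ odd force $\ell$ even), so $\omega_{2m}^{-j}=\omega_m^{-j/2}$ and the reduction to $\sum_{\gcd(m,h)=\ell}\omega_m^{-h}=\mu(a)$ is direct. The delicate case is $\lambda=(\overline a^\ell)$ with $a>1$ odd: there the relevant powers are $d^j$ with $\gcd(m,j)=\ell$ and $j/\ell$ odd, and after the sign manipulation $\omega_{2m}^{-j}+\omega_{2m}^{-(2m-j)}=-\bigl(\omega_m^{(m-j)/2}+\omega_m^{(m+j)/2}\bigr)$ you must show that the exponents $(m\pm j)/2$ run exactly over $\{\,h\in[m]\mid\gcd(m,h)=\ell\,\}$, i.e.\ that $\ell$ divides $(m\pm j)/2$. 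This is not automatic; it holds only because the hypotheses ($m$ even, $a=m/\ell$ odd, $j/\ell$ odd) force the $2$-adic valuations of $m$, $\ell$, and $j$ to coincide, say $m=2^kq_m$, $\ell=2^kq_\ell$, $j=2^kq_j$ with $q_m,q_\ell,q_j$ odd, whence $(m\pm j)/2=2^k(q_m\pm q_j)/2$ is divisible by $2^kq_\ell=\ell$. Your outline correctly identifies the sign in the $(\overline a^\ell)$ case as the point to be confirmed, but without this divisibility argument the identification of the half-sums with $-\mu(a)$ does not go through, so the proof is not yet complete as written.
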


\begin{proof}
  The strategy of the proof is to compute the values of both induced
  characters and then observe that they are equal. We give the details for
  the character $\Ind_{\la d \ra}^{W_m} (\chi_{\zeta_d})$. The computation
  of the character values of $\Ind_{\la c, w_0 \ra}^{W_m} (\chi_{\epsilon
    \zeta_c})$ is similar (and easier), and is omitted.

  Now consider $\left( \Ind_{\la d\ra}^{W_m} (\chi_{\zeta_d})
  \right)_{\lambda}$.  It follows from \cref{lem:ct} that if $\lambda$ is
  not equal to $(a^\ell)$ or $(\overline a^\ell)$ for some factorization
  $m=a\ell$, then $\left( \Ind_{\la d\ra}^{W_m} (\chi_{\zeta_d})
  \right)_{\lambda}=0$.

  Suppose $m=a\ell$ and $\lambda=(a^\ell)$ or $(\overline a^\ell)$. There
  are two cases depending on whether $a$ is even or odd.

  First suppose that $a$ is even. By \cref{lem:ct}, no elements in $\la d
  \ra$ have signed cycle type $(a^\ell )$, and the elements with signed
  cycle type $( \overline a^\ell )$ are the elements $d^j$ and $d^{m+j}$
  with $j\in [m]$ and $\gcd(m,j) = \ell$. Using the notation in
  \cref{ssec:Eqncd}, for each such $j$ we can pair the elements $d^j$ and
  $d^{m+j}$ to obtain
  \[
  \gamma_{d^j} + \gamma_{d^{m+j}} = ( \omega_{2m}^{-j} +
  \omega_{2m}^{-(m+j)} )/ 2m = ( \omega_{2m}^{-j} - \omega_{2m}^{-j}) /2m =
  0.
  \]
  Therefore, $\Ind_{\la d \ra}^{W_m} (\chi_{\zeta_d})$ vanishes on the
  conjugacy classes of $W_m$ with signed cycle type $(a^\ell)$ and
  $(\overline a^\ell)$ when $a$ is even.

  Now suppose that $m=a\ell$ and $a$ is odd. There are four subcases.

  First, the only element in $\la d \ra$ with signed cycle type
  $\lambda=(1^m)$ is the identity, and
  \[
  \left(\Ind_{\la d \ra}^{W_m} (\chi_{\zeta_d})\right) _{(1^m)} =|W_m|/2m
  =2^{m-1}(m-1)! =a^{\ell-1} (\ell-1)!\, 2^{\ell-1} \mu(a),
  \]
  because $\ell =m$ and $\mu(1)=1$.  Similarly, the only element in $\la
  d\ra$ with signed cycle type $\lambda=(\overline 1^m)$ is $w_0=d^m$, and
  \[
  \left(\Ind_{\la d \ra}^{W_m} (\chi_{\zeta_d})\right) _{(\overline 1^m)}
  =|W_m| \, \omega_{2m}^{-m} /2m = -2^{m-1}(m-1)! =-a^{\ell-1} (\ell-1)!\,
  2^{\ell-1} \mu(a).
  \]

  Now suppose that $a>1$ and $\lambda=(a^\ell)$. Set
  \[
  A= \{\,j\in [m-1] \mid \text{$\gcd(m,j)= \ell$ and $j/\ell$ is even}\,\}.
  \]
  Because $d^j$ and $(d^j)^{-1} = d^{2m-j}$ are conjugate in $W_m$, the set
  of elements in $\la d \ra$ with signed cycle type $(a^\ell)$ is $\{\, d^j,
  d^{2m-j}\mid j\in A\,\}$. Note that because $m$ is even and $a$ is odd,
  $\ell =m/a$ is even, and therefore if $j\in A$, then $\ell$ divides $j$
  and so $j$ is even. Thus, using the fact that $c^\ell$ has signed cycle
  type $(a^\ell)$ we have
  \begin{equation}
    \label{eq:aaa}
    \begin{aligned}
      \left(\Ind_{\la d \ra}^{W_m} (\chi_{\zeta_d})\right) _{(a^{\ell})} & =
      |Z_{W_m} (c^{\ell})| \sum_{j\in A} ( \omega_{2m}^{-j} +
      \omega_{2m}^{-(2m-j)}) /2m \\
      &= a^{\ell-1} (\ell-1)!\, 2^{\ell-1} \sum_{j\in A} (
      \omega_{2m}^{-2(j/2)} +  \omega_{2m}^{-(2m-2(j/2))} ) \\
      &= a^{\ell-1} (\ell-1)!\, 2^{\ell-1} \sum_{k\in A/2} (\omega_{m}^{-k}
      + \omega_m^{-(m-k)} ).
    \end{aligned}
  \end{equation}

  Observe that with our assumptions on $m$ and $a$, if $k=j/2\in A/2$, then
  $\gcd(m,k) = \gcd(m, j) =\ell$. Moreover, $\gcd(m,k) = \gcd(m,m-k)$, so
  $\gcd(m, m-k)=\ell$. Therefore,
  \[
  \{\,k, m-k \mid k \in A/2\,\} = \{\,h\in [m]\mid \gcd(m,h)=\ell\,\},
  \]
  and so letting $k'= h/\ell$, and using that $m/\ell =a$, we have
  \[
  \{\, h\in [m] \mid \gcd(m,h) = \ell \,\} = \{ \ell k' \mid
  \text{$k'\in[a]$ and $\gcd(a, k') = 1$} \,\}.
  \] 
  Thus
  \begin{equation}
    \label{eq:mu}
    \sum_{k\in A/2} (\omega_{m}^{-k} + \omega_m^{-(m-k)} ) =
    \sum_{\substack{h\in [m]\\ \gcd(m,h)=\ell}} \omega_m^{-h} =
    \sum_{\substack{k' \in [a] \\ \gcd (a, k') = 1}} \omega_m^{-\ell k'} =
    \sum_{\substack{ k'\in [a] \\ \gcd (a, k') = 1}} \omega_a^{-k'} = \mu(a),    
  \end{equation}
  where the last equality holds because the sum is over all primitive
  $a^{\text{th}}$ roots of unity. Substituting \eqref{eq:mu} in
  \eqref{eq:aaa} gives $\left(\Ind_{\la d \ra}^{W_m}
    (\chi_{\zeta_d})\right)_{(a^{\ell})} = a^{\ell-1} (\ell-1)!\, 2^{\ell-1}
  \mu(a)$.
  
  Finally suppose that $a>1$ and $\lambda=(\overline a^\ell)$.  Set
  \[
  B= \{\,j\in [m-1] \mid \text{$\gcd(m,j)= \ell$ and $j/\ell$ is odd}\,\}.
  \]
  Then the set of elements in $\la d \ra$ with signed cycle type $(\overline
  a^\ell)$ is $\{\, d^j, d^{2m-j}\mid j\in B\,\}$, and because $w_0c^\ell$
  has signed cycle type $(\overline a^\ell)$, we have
  \begin{align*}
    \left(\Ind_{\la d \ra}^{W_m} (\chi_{\zeta_d})\right) _{(\overline
    a^{\ell})}  
    & = |Z_{W_m} (w_0c^{\ell})| \sum_{j\in B} ( \omega_{2m}^{-j} +
      \omega_{2m}^{-(2m-j)}) /2m\\
    & = a^{\ell-1} (\ell-1)!\, 2^{\ell-1} \sum_{j\in B} ( \omega_{2m}^{-j} +
      \omega_{2m}^{-(2m-j)}) .
  \end{align*}

  Again, if $j\in B$, then $j$ is even. Hence $m+j$ and $m-j$ are both even,
  so
  \[
  \omega_{2m}^{-j} + \omega_{2m}^{-(2m-j)} = \omega_{2m}^m
  (\omega_{2m}^{m-j} + \omega_{2m}^{m+j}) = - (\omega_{2m}^{m-j} +
  \omega_{2m}^{m+j}) = -( \omega_m^{\frac{m-j}{2}} +
  \omega_m^{\frac{m+j}{2}}).
  \]

  We now show that if $j\in B$, then $\ell$ divides $(m+ j)/2$ and
  $(m-j)/2$, from which it follows that $\gcd(m, (m\pm j)/2) = \ell$, and
  hence that
  \[
  \{\, (m\pm j)/2 \mid j\in B\,\} = \{\, h\in [m] \mid \gcd(m,h)=\ell \,\}.
  \]
  Say $m=2^{k_m} q_m$, $\ell = 2^{k_{\ell}}q_{\ell}$, and $j=2^{k_j}q_j$,
  where $q_m$, $q_\ell$, and $q_j$ are all odd. Since $\ell|m$ and $\ell|j$,
  and $m/\ell = a$ and $j/\ell$ are odd, it must be that $k_m = k_{\ell}$
  and $k_j = k_{\ell}$. Set $k=k_{\ell} = k_j = k_m$. Then $m\pm j = 2^{k}
  (q_m \pm q_j)$. Because $q_m$ and $q_j$ are odd, $q_m \pm q_j$ is
  even. Hence $2^{k}$ divides $(m\pm j)/2$. Moreover, $q_\ell$ divides
  $(q_m\pm q_j)/2$ because $q_m$, $q_\ell$, and $q_j$ are all odd. Hence
  $2^k q_\ell=\ell$ divides $(m \pm j)/2$.

  Now, arguing as in \eqref{eq:mu} we have
  \begin{multline*}
    \sum_{j\in B} ( \omega_{2m}^{-j} + \omega_{2m}^{-(2m-j)}) = -\sum_{j\in
      B} ( \omega_m^{\frac{m-j}{2}} + \omega_m^{\frac{m+j}{2}})= -
    \sum_{\substack{h\in [m]\\ \gcd(m,h)=\ell}} \omega_m^{h} = -\mu(a),
  \end{multline*}
  and so $\left(\Ind_{\la d \ra}^{W_m} (\chi_{\zeta_d})\right) _{(\overline
    a^{\ell})} = -a^{\ell-1} (\ell-1)!\, 2^{\ell-1} \mu(a)$. This completes
  the proof of the lemma.
\end{proof}

\section{Proof of \texorpdfstring{\cref{thm:main}}{cref}}\label{sec:mainthm}

Throughout this section, 
\[
\lambda= (\lambda_1, \dots, \lambda_a, \lambda_{a+1}, \dots, \lambda_{a+b})
\]
is a signed partition of $n$ and we use the notation and conventions
introduced in \cref{subsec:compandpart}.

\subsection{}

To show that $Z_{W_n}(w_{\lambda})$ acts on $\elt$ on the right as scalars
we compute the action of the generators of $Z_{W_n}(w_{\lambda})$. Recall
from \cref{subsec:cent} that $Z_{W_n}(w_{\lambda})$ is generated by
\[
\{\, c_i, w_{0,\Lambda_i} \mid i\in [a] \,\} \amalg \{\, d_i \mid i\in [a+b]
\setminus [a] \,\} \amalg \{\, y_i \mid \lambda_i = \lambda_{i+1}, \
i\in[a+b-1] \,\},
\]
and from \cref{subsec:thm} that $\elt = \xlp f_1 \cdots f_{a+b}$, where 
\[
f_i =
\begin{cases}
  \epsilon_{\Lambda_i}^{+}\zeta_{c_i} & \text{if $i\in [a]$} \\
  \epsilon_{\Lambda_i}^{-} \tilde \zeta_{c_i} & \text{if $i \in
    [a+b]\setminus [a]$ and $\lambda_i$ is odd} \\
  \zeta_{d_i} & \text{if $i \in [a+b]\setminus [a]$ and $\lambda_i$ is
    even.}
\end{cases}
\]
In the computations below we repeatedly use the fact that $W_{\Lambda_i}$
and $W_{\Lambda_j}$ commute elementwise whenever $i\ne j$.

\medskip We first show that $\elt \cdot c_i= \omega_{|c_i|} \elt$ for $i\in
[a]$: This equality is clear because $f_i= \epsilon_{\Lambda_i}^+
\zeta_{c_i}$ and so
\[
f_i\cdot c_i= \epsilon_{\Lambda_i}^+ \zeta_{c_i}\cdot c_i =
\omega_{\lambda_i} \epsilon_{\Lambda_i}^+ \zeta_{c_i} = \omega_{|c_i|} f_i.
\]
Thus
\[
\elt \cdot c_i = \xlp f_1\dotsm (f_ic_i ) \dotsm f_{a+b} = \omega_{|c_i|}
\xlp f_1\dotsm f_{a+b} = \omega_{|c_i|} \elt.
\]

\medskip Next, we show that $\elt \cdot w_{0,\Lambda_i}= \elt$ for $i\in
[a]$: In this case $f_i= \epsilon_{\Lambda_i}^+ \zeta_{c_i} = ( 1/2)(\id+
w_{0, \Lambda_i}) \zeta_{c_i}$. Therefore,
\[
\epsilon_{\Lambda_i}^+ \zeta_{c_i} \cdot w_{0,\Lambda_i} =
(\epsilon_{\Lambda_i}^+ \cdot w_{0, \Lambda_i}) \cdot \zeta_{c_i} = (1/2) (
w_{0, \Lambda_i}+\id) \cdot \zeta_{c_i} = \epsilon_{\Lambda_i}^+
\zeta_{c_i},
\]
and the result follows.

\medskip Now, we show that $\elt \cdot d_i= \omega_{|d_i|} \elt$ for $i\in
[a+b]\setminus [a]$: There are two cases depending on whether $\lambda_i$ is
odd or even. If $\lambda_i$ is odd, then $d_i= c_i w_{0, \Lambda_i}$ and
$f_i= \epsilon_{\Lambda_i}^- \tilde \zeta_{c_i} = (1/2) (\id- w_{0,
  \Lambda_i}) \tilde \zeta_{c_i}$. Therefore
\begin{multline*}
  f_i \cdot d_i = (1/2) (\id- w_{0, \Lambda_i}) \tilde \zeta_{c_i}\cdot
  c_iw_{0, \Lambda_i} = \big( (1/2)(\id- w_{0, \Lambda_i}) \cdot w_{0,
    \Lambda_i} \big) \big( \tilde \zeta_{c_i} \cdot c_i\big)\\ =
  -\omega_{|\lambda_i|}^{(|\lambda_i|+1)/2} \epsilon_{\Lambda_i}^- \tilde
  \zeta_{c_i} = \omega_{|d_i|} f_i.
\end{multline*}
On the other hand, if $\lambda_i$ is even, then $f_i= \zeta_{d_i}$ and the
result follows as in the computation of $\elt \cdot c_i$.

\medskip Last, we show that $\elt \cdot y_i= \elt$ for $i\in [a+b-1]$ with
$\lambda_i=\lambda_{i+1}$: Recall that $y_i|_{\Lambda_l}$ is the identity
for $l\ne i, i+1$ and that the restriction of $y_i$ to ${\Lambda_i}$ defines
the unique order preserving bijection between $\Lambda_i$ and
$\Lambda_{i+1}$. In particular, $y_i$ is an involution in $S_n$ such that
$y_ic_i y_i=c_{i+1}$, $y_iw_{0, \Lambda_i} y_i=w_{0,\Lambda_{i+1}}$, and
$y_id_iy_i=d_{i+1}$. It is straightforward to check that $y_i f_i y_i =
f_{i+1}$, and so
\[
\elt \cdot y_i = \xlp f_1 \cdots f_{i} f_{i+1} y_i \cdots f_{a+b} = \xlp f_1
\cdots y_i f_{i+1} f_i \cdots f_{a+b} = \xlp y_i f_1 \cdots f_{a+b}.
\]

To complete the computation we need to show that $\xlp y_i= \xlp$. To see
this, recall that $\xlp = \sum_{w\in X_{|\lambda|}} w$.  Thus, it suffices
to show that $X_{|\lambda|} y_i= X_{|\lambda|}$. As in \cref{subsec:gridem},
if $w\in W_n$, then $w\in X_{|\lambda|}$ if and only if $w(j)>0$ for $j\in
[n]$ and $w|_{\Lambda_l}\colon \Lambda_l \to [n]$ is increasing for $l\in
[a+b]$. It is easy to see that if $w\in X_{|\lambda|}$, then $wy_i(j)>0$ for
$j\in [n]$ and $wy_i|_{\Lambda_l} \colon \Lambda_l \to [n]$ is increasing
for $l\in [a+b]$, so $wy_i\in X_{|\lambda|}$. Therefore $X_{|\lambda|} y_i=
X_{|\lambda|}$, and so $\xlp y_i= \xlp$.

\subsection{} \label{subsec:end}

To complete the proof of \cref{thm:main}, it remains to show that there is
an isomorphism of right $\mathbb{C}W_n$-modules
\begin{equation*}
  \label{eq:eelt}
  E_{\lambda} \mathbb{C}W_n \cong  \Ind_{Z_{W_n}\left(w_\lambda
    \right)}^{W_n} \left( \CC\elt \right). 
\end{equation*}
Recall the idempotents $E_{\lambda}$ and the quasi-idempotents $e_p$ from
\cref{subsec:idemd}. Notice that $e_\lambda$ is one of the summands in the
definition of $E_\lambda$. The strategy is to show that
\begin{equation}
  \label{eq:eelt1}
  E_{\lambda}\mathbb{C}W_n = e_{\lambda} \mathbb{C}W_n \cong \elt
  \mathbb{C}W_n \cong \Ind_{Z_{W_n}\left(w_\lambda \right)}^{W_n} \left(
    \CC \elt \right).  
\end{equation}

\begin{lemma}\label{lem:El=el} 
  With the preceding notation,
  \[
  E_{\lambda}\mathbb{C}W_n = e_{\lambda} \mathbb{C}W_n.
  \]
\end{lemma}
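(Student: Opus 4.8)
The plan is to deduce everything from \cref{cor:ep2}. Note that $\overleftarrow\lambda=\lambda$ and that $e_\lambda$ is the $p=\lambda$ summand in $E_\lambda=\frac1{(a+b)!}\sum_{\overleftarrow p=\lambda}e_p$. So I would prove the two equalities
\[
E_\lambda e_\lambda=e_\lambda\qquad\text{and}\qquad e_\lambda E_\lambda=\abs{\Stab(\lambda)}\,E_\lambda,
\]
and then read off the two containments of right ideals from them. Both equalities come from collapsing the defining sum for $E_\lambda$ using the ``more generally'' clause of \cref{cor:ep2}, namely that $e_pe_q=\abs{\Stab(q)}\,e_q$ whenever $\overleftarrow p=\overleftarrow q$.

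The only combinatorial input needed is that $\{\,p\in\CalS\CalC(n)\mid\overleftarrow p=\lambda\,\}$ is precisely the orbit of $\lambda$ under the action of $S_{a+b}$ permuting parts, so by orbit--stabilizer it has cardinality $(a+b)!/\abs{\Stab(\lambda)}$, and every such $p$ has $\Stab(p)$ conjugate to $\Stab(\lambda)$ in $S_{a+b}$, hence $\abs{\Stab(p)}=\abs{\Stab(\lambda)}$. Granting this, for each $p$ with $\overleftarrow p=\lambda$ we have $e_pe_\lambda=\abs{\Stab(\lambda)}\,e_\lambda$ (apply \cref{cor:ep2} with $q=\lambda$) and $e_\lambda e_p=\abs{\Stab(p)}\,e_p=\abs{\Stab(\lambda)}\,e_p$, so
\[
E_\lambda e_\lambda=\frac1{(a+b)!}\sum_{\overleftarrow p=\lambda}e_pe_\lambda=\frac1{(a+b)!}\cdot\frac{(a+b)!}{\abs{\Stab(\lambda)}}\cdot\abs{\Stab(\lambda)}\,e_\lambda=e_\lambda,
\]
and likewise $e_\lambda E_\lambda=\frac1{(a+b)!}\sum_{\overleftarrow p=\lambda}\abs{\Stab(\lambda)}\,e_p=\abs{\Stab(\lambda)}\,E_\lambda$.

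Finally I would conclude as follows. From $E_\lambda e_\lambda=e_\lambda$ we get $e_\lambda\in E_\lambda\CC W_n$, hence $e_\lambda\CC W_n\subseteq E_\lambda\CC W_n$. From $e_\lambda E_\lambda=\abs{\Stab(\lambda)}\,E_\lambda$ we get $E_\lambda=e_\lambda\cdot\bigl(\abs{\Stab(\lambda)}^{-1}E_\lambda\bigr)\in e_\lambda\CC W_n$, hence $E_\lambda\CC W_n\subseteq e_\lambda\CC W_n$; combining the two containments gives the claimed equality. I do not expect a genuine obstacle here: the argument is a direct computation once \cref{cor:ep2} is available, and the only point meriting an explicit sentence is the orbit--stabilizer count together with the conjugacy of the groups $\Stab(p)$ for $p$ in a fixed orbit.
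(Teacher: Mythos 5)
Your proposal is correct and follows essentially the same route as the paper: the paper's proof also deduces $E_\lambda e_\lambda = e_\lambda$ and $e_\lambda E_\lambda = |\Stab(\lambda)|\,E_\lambda$ from \cref{cor:ep2} and then reads off the two inclusions $e_\lambda\CC W_n = E_\lambda e_\lambda \CC W_n \subseteq E_\lambda \CC W_n$ and $E_\lambda \CC W_n = e_\lambda E_\lambda \CC W_n \subseteq e_\lambda \CC W_n$. You merely spell out the orbit--stabilizer count and the conjugacy of the stabilizers, which the paper leaves implicit in its citation of \cref{cor:ep2}.
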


\begin{proof}
  It follows from \cref{cor:ep2} that $E_{\lambda} e_\lambda = e_\lambda$
  and $e_\lambda E_\lambda=|\Stab (\lambda)|\, E_\lambda$, so
  \[
  e_\lambda \mathbb{C}W_n = E_{\lambda} e_\lambda \mathbb{C}W_n \subseteq
  E_{\lambda} \mathbb{C}W_n, \quad \text{and} \quad E_{\lambda}
  \mathbb{C}W_n = e_\lambda E_{\lambda} \mathbb{C}W_n \subseteq e_\lambda
  \mathbb{C}W_n.
  \]
\end{proof}

The next lemma is well-known and easy to prove. 

\begin{lemma}\label{lem:idemef=e}
  Suppose $e$ and $f$ are idempotents in a ring $A$.
  \begin{enumerate}
  \item If $ef=f$ and $fe=e$, then $eA = fA$.
  \item If $ef=e$ and $fe=f$, then $eA \cong fA$ as right ideals.
  \end{enumerate}
\end{lemma}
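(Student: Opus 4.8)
The statement is the standard dictionary between idempotents and their right ideals, and the proof is a short module-theoretic argument. For part (1), assume $ef=f$ and $fe=e$. Then for any $a\in A$ we have $fa = (fe)a \in eA$ since $fe=e$ gives $fa = (fe)a = e(fa)$... wait, more cleanly: $f = fe$, so $fa = fea \in eA$, hence $fA\subseteq eA$; symmetrically $e=ef$ gives $ea = efa\in fA$, so $eA\subseteq fA$. Therefore $eA=fA$. That is the whole argument for (1).

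For part (2), assume $ef=e$ and $fe=f$. The plan is to exhibit mutually inverse $A$-module homomorphisms between $eA$ and $fA$. Define $\rho\colon eA\to fA$ by right multiplication, $\rho(x)=fx$; this lands in $fA$ and is a right $A$-module map. Note that for $x\in eA$ we have $x=ex$ (since $e$ is idempotent and $x=ea$ implies $ex=e^2a=ea=x$), so $\rho(x)=fx=fex=fe\cdot x$... hmm, I should instead observe $\rho(x) = fx$, and then define $\sigma\colon fA\to eA$ by $\sigma(y)=ey$. For $x\in eA$, $\sigma(\rho(x)) = efx = (ef)x = ex = x$ using $ef=e$ and $x=ex$. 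For $y\in fA$, $\rho(\sigma(y)) = fey = (fe)y = fy = y$ using $fe=f$ and $y=fy$. Hence $\rho$ and $\sigma$ are mutually inverse right $A$-module isomorphisms, so $eA\cong fA$ as right $A$-modules (equivalently, as right ideals).

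I expect no real obstacle here; the only point requiring a moment's care is the repeated use of the identities $x=ex$ for $x\in eA$ and $y=fy$ for $y\in fA$, which follow from idempotency, and keeping straight which of the two hypotheses ($ef=e$ versus $fe=f$) is invoked at each step. Since the lemma is flagged as ``well-known and easy to prove,'' a proof of two or three sentences along the lines above suffices, and in fact the paper may simply cite it or leave it to the reader.
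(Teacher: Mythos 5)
Your proof is correct: part (1) follows from $f=fe$ and $e=ef$ exactly as you write, and in part (2) the maps $x\mapsto fx$ and $y\mapsto ey$ are mutually inverse right $A$-module homomorphisms by the computation $efx=ex=x$ and $fey=fy=y$. The paper itself omits the proof, declaring the lemma well-known, and your argument is the standard one it has in mind.
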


\begin{lemma}\label{lem:eltcongel}
  There is an isomorphism of right ideals
  \[
  e_{\lambda}\mathbb{C}W_n \cong\elt\mathbb{C}W_n.
  \]
\end{lemma}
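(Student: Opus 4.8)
The plan is to rewrite both ideals as ``$x_{|\lambda|}$ times an idempotent of $\mathbb{C}W_{|\lambda|}$'', compare the two idempotents block by block, and then transport the comparison to $\mathbb{C}W_n$ by inducing along $W_{|\lambda|}\le W_n$.

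For $i\in[a]$ put $f_i'=\epsilon_{\Lambda_i}^{+}r_{\Lambda_i}$, and for $i\in[a+b]\setminus[a]$ put $f_i'=\epsilon_{\Lambda_i}^{-}r_{\Lambda_i}$, so that the definition of $e_p$ gives $e_\lambda=x_{|\lambda|}f_1'\cdots f_{a+b}'$, while $\widetilde e_\lambda=x_{|\lambda|}f_1\cdots f_{a+b}$. Since $w_{0,\Lambda_i}$ is central in $W_{\Lambda_i}$, since $\zeta_{c_i},\tilde\zeta_{c_i}\in\mathbb{C}\langle c_i\rangle$ and $\zeta_{d_i}\in\mathbb{C}\langle d_i\rangle$, and since $\tilde\zeta_{c_i}$ is itself an idempotent (being $\zeta'_{c_i}$ for the primitive root of unity $\omega_{|c_i|}^{(|c_i|+1)/2}$, using $\gcd\!\big((|c_i|+1)/2,|c_i|\big)=1$), each $f_i$ and each $f_i'$ is an idempotent of $\mathbb{C}W_{\Lambda_i}$. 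As $W_{\Lambda_1},\dots,W_{\Lambda_{a+b}}$ commute elementwise, $\epsilon:=f_1\cdots f_{a+b}$ and $\epsilon':=f_1'\cdots f_{a+b}'$ are idempotents of $\mathbb{C}W_{|\lambda|}=\mathbb{C}W_{\Lambda_1}\otimes\cdots\otimes\mathbb{C}W_{\Lambda_{a+b}}$, with $\epsilon'\mathbb{C}W_{|\lambda|}=\bigotimes_i f_i'\mathbb{C}W_{\Lambda_i}$ and similarly for $\epsilon$; and $e_\lambda=x_{|\lambda|}\epsilon'$, $\widetilde e_\lambda=x_{|\lambda|}\epsilon$.

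Next I would establish the block isomorphisms $f_i'\mathbb{C}W_{\Lambda_i}\cong f_i\mathbb{C}W_{\Lambda_i}$ of right $\mathbb{C}W_{\Lambda_i}$-modules. In every case $\epsilon_{\Lambda_i}^{\pm}$ is a central idempotent of $\mathbb{C}W_{\Lambda_i}$, and $\mathbb{C}W_{\Lambda_i}$ is free as a left $\mathbb{C}S_{\Lambda_i}$-module, so $r_{\Lambda_i}\mathbb{C}W_{\Lambda_i}\cong\Ind_{S_{\Lambda_i}}^{W_{\Lambda_i}}\!\big(r_{\Lambda_i}\mathbb{C}S_{\Lambda_i}\big)$ and likewise for $\zeta_{c_i}$, $\tilde\zeta_{c_i}$. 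Invoking the classical fact (Klyachko; Garsia--Reutenauer) that $r_m\mathbb{C}S_m$ is isomorphic to $\Ind_{\langle c\rangle}^{S_m}$ of a faithful linear character of the $m$-cycle $c$ — hence, since $\mathrm{Lie}_m$ is defined over $\mathbb{Q}$, isomorphic to each of $\zeta_c\mathbb{C}S_m$ and $\tilde\zeta_c\mathbb{C}S_m$ — then inducing up to $W_{\Lambda_i}$ and cutting by $\epsilon_{\Lambda_i}^{\pm}$ handles the cases $i\in[a]$ and ($i>a$, $\lambda_i$ odd), and in the case ($i>a$, $\lambda_i$ even) reduces $f_i'\mathbb{C}W_{\Lambda_i}=\epsilon_{\Lambda_i}^{-}r_{\Lambda_i}\mathbb{C}W_{\Lambda_i}\cong\epsilon_{\Lambda_i}^{-}\zeta_{c_i}\mathbb{C}W_{\Lambda_i}$ to $\zeta_{d_i}\mathbb{C}W_{\Lambda_i}=f_i\mathbb{C}W_{\Lambda_i}$ by \cref{prop:evenind}. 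I would make these choices coherently — the same template isomorphism for all blocks $\Lambda_i$ of the same signed size — so that the tensor isomorphism $\epsilon'\mathbb{C}W_{|\lambda|}\cong\epsilon\mathbb{C}W_{|\lambda|}$ commutes with the block-transpositions $y_i$ (for $\lambda_i=\lambda_{i+1}$).

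The last step, transporting this to $\mathbb{C}W_n$, is where I expect the main difficulty. One is tempted to apply $\Ind_{W_{|\lambda|}}^{W_n}$, but $x_{|\lambda|}\mathbf e\,\mathbb{C}W_n$ is in general strictly smaller than $\Ind_{W_{|\lambda|}}^{W_n}(\mathbf e\,\mathbb{C}W_{|\lambda|})$: left multiplication by $x_{|\lambda|}$ has a large kernel, and the natural surjection $\Ind_{W_{|\lambda|}}^{W_n}(\mathbf e\,\mathbb{C}W_{|\lambda|})\twoheadrightarrow x_{|\lambda|}\mathbf e\,\mathbb{C}W_n$ collapses the $\Stab(\lambda)$-symmetric directions (visible already on dimensions). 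What is needed is a compatibility lemma for the basis $\{x_p\}$: the kernel of that surjection is built only from the action of the transpositions $y_i$ permuting equal blocks, and so depends on $\lambda$ but not on the choice of idempotent $\mathbf e$ among the coherent family above. This should be read off from the concatenation-product description of $e_p=I_p$ developed in \cref{sec:idem}, together with the parallel description $\widetilde e_\lambda=\sum_{(J_1,\dots,J_{a+b})}(f_1)_{[J_1]}*\cdots*(f_{a+b})_{[J_{a+b}]}$ furnished by \cref{subsec:gridem}\eqref{eq:cat}. Granting it, the $y_i$-equivariant block isomorphism carries one kernel onto the other, and passing to quotients yields $e_\lambda\mathbb{C}W_n=x_{|\lambda|}\epsilon'\mathbb{C}W_n\cong x_{|\lambda|}\epsilon\,\mathbb{C}W_n=\widetilde e_\lambda\mathbb{C}W_n$. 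As a fallback I would keep the route through characters: $\mathbb{C}W_n$ is semisimple, so it suffices to show the two ideals afford the same $W_n$-character; the character of $e_\lambda\mathbb{C}W_n$ follows from \cref{cor:ep2} and \cref{lem:idemchar}, and one would verify that $\widetilde e_\lambda$ is likewise a quasi-idempotent with $\widetilde e_\lambda^{\,2}=|\Stab(\lambda)|\,\widetilde e_\lambda$ and then match the two class functions by a conjugacy-class computation in the style of \cref{lem:aodd}.
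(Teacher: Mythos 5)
Your block-by-block reduction is exactly the paper's argument. You reduce the lemma to the isomorphisms $\epsilon_{\Lambda_i}^{\pm}r_{\Lambda_i}\CC W_{\Lambda_i}\cong f_i\CC W_{\Lambda_i}$, handle the symmetric-group part via the Klyachko idempotent (the paper cites Garsia's equality $\chi_{r_m}=\chi_{\kappa_m}$ together with Reutenauer's identity $\kappa_m\CC S_m=\zeta_c'\CC S_m$, valid for an arbitrary primitive $m$-th root of unity --- which is precisely your observation that the choice of faithful character of $\la c\ra$ is immaterial and covers $\tilde\zeta_{c_i}$), and invoke \cref{prop:evenind} for the even negative parts. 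That portion is correct and is the substance of the paper's proof of \cref{lem:eltcongel}.

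The proposal is nevertheless incomplete, at the step you yourself flag with ``Granting it.'' Your worry is mathematically legitimate: writing $e_\lambda=x_{|\lambda|}\epsilon'$ and $\elt=x_{|\lambda|}\epsilon$ with $\epsilon',\epsilon$ idempotents of $\CC W_{|\lambda|}$, one computes from \cref{cor:ep2} and \cref{lem:idemchar} that $\dim e_\lambda\CC W_n=|W_n|/\bigl(|\Stab(\lambda)|\prod_i 2|\lambda_i|\bigr)$, whereas $\dim\Ind_{W_{|\lambda|}}^{W_n}(\epsilon'\CC W_{|\lambda|})=|W_n|/\prod_i 2|\lambda_i|$; so when $\lambda$ has repeated parts, left multiplication by $x_{|\lambda|}$ genuinely collapses these ideals, and the two modules in question are quotients of the isomorphic modules $\epsilon'\CC W_n\cong\epsilon\CC W_n$ by their intersections with the right annihilator of $x_{|\lambda|}$. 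Isomorphic right ideals of a semisimple algebra can meet a fixed right ideal in non-isomorphic submodules (compare $E_{11}M_2(\CC)$ and $E_{22}M_2(\CC)$ against $R=E_{11}M_2(\CC)$), so the substitution is not formal. Your proposal does not close this: the ``compatibility lemma'' is described but not proved, and the character-theoretic fallback would first require establishing $\elt^{\,2}=|\Stab(\lambda)|\,\elt$ (the analogue of \cref{cor:ep2} with $r_{P_i}$ replaced by $\zeta_{c_i}$, $\tilde\zeta_{c_i}$, $\zeta_{d_i}$), which you also leave undone. For comparison, the paper disposes of this step in a single displayed computation, replacing each factor $\epsilon_{\Lambda_i}^{\pm}r_{\Lambda_i}\CC W_{\Lambda_i}$ by $f_i\CC W_{\Lambda_i}$ inside $x_{|\lambda|}(\cdots)\cdot\CC W_n$; to finish your write-up you must either justify that substitution or carry out one of the two completions you sketch.
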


\begin{proof}
  In this proof we use the theory of Lie idempotents (see
  \cite[\S8.4]{reutenauer:free}).

  Suppose first that $m$ is any positive integer. Then by \cite[Theorem
  8.16, Theorem 8.17]{reutenauer:free}, the Reutenauer idempotent $r_m$ and
  the Klyachko idempotent $\kappa_m$ are both Lie idempotents in the group
  algebra $\CC S_m$. Say $r_m=\sum_{x\in S_m} a_x x$ and
  $\kappa_m=\sum_{x\in S_m} \alpha_x x$. With the notation of
  \cref{lem:idemchar},
  \[
  \chi_{r_m}(w) = |Z_{S_m}(w)| \sum_{x\in \ccl(w)} a_x \quad
  \text{and}\quad\chi_\kappa(w) = |Z_{S_m}(w)| \sum_{x\in \ccl(w)} \alpha_x
  \]
  for $w\in W$. Garsia \cite[Proposition 5.1]{garsia:combinatorics} has
  shown that $\sum_{x\in \ccl(w)} a_x = \sum_{x\in \ccl(w)} \alpha_x$, which
  implies that $\chi_{r_m}= \chi_{\kappa_m}$, and hence that $r_m \CC S_m
  \cong \kappa_m \CC S_m$. In addition, Reutenauer \cite[Lemma 8.19]
  {reutenauer:free} has shown that if $c$ is the $m$-cycle $( 1 \; 2 \;
  \cdots \; m)$, $\omega$ is any primitive $m^{\text{th}}$ root of unity
  (not necessarily $\omega_m$), and $\zeta_c'= \frac 1m \sum_{j=1}^m
  \omega^{-j} c^j$, then $\zeta_c'\kappa = \kappa$, and $\kappa \zeta_c' =
  \zeta_c'$. Thus by \cref{lem:idemef=e} $\kappa_m \mathbb{C}S_m = \zeta_c'
  \mathbb{C}S_m$, which implies that
  \begin{equation}\label{eq:r=z}
    r_m\mathbb{C}S_m \cong \zeta_c' \mathbb{C}S_m.
  \end{equation}

  Suppose $i\in [a+b]$ and consider the right ideals
  $\epsilon_{\Lambda_i}^{\pm} r_{\Lambda_i} \CC W_{\Lambda_i}$ and $f_i \CC
  W_{\Lambda_i}$ in $\CC W_{\Lambda_i}$. If $i\in [a]$, then $f_i=
  \epsilon_{\Lambda_i}^{+} \zeta_{c_i}$, and it follows from \cref{eq:r=z}
  that $\epsilon_{\Lambda_i}^{+} r_{\Lambda_i} \CC W_{\Lambda_i} \cong
  \epsilon_{\Lambda_i} ^{+} \zeta_{c_i} \CC W_{\Lambda_i} = f_i \CC
  W_{\Lambda_i}$.  If $i\in [a+b]\setminus [a]$ and $\lambda_i$ is odd, then
  $f_i= \epsilon_{\Lambda_i}^{-} \tilde \zeta_{c_i}$, and it again follows
  from \cref{eq:r=z} that $\epsilon_{\Lambda_i}^{-} r_{\Lambda_i} \CC
  W_{\Lambda_i} \cong \epsilon_{\Lambda_i}^{-} \tilde \zeta_{c_i} \CC
  W_{\Lambda_i} = f_i \CC W_{\Lambda_i}$. Finally, if $i\in [a+b]\setminus
  [a]$ and $\lambda_i$ is even, then $f_i= \zeta_{d_i}$, and it follows from
  \cref{eq:r=z} and \cref{prop:evenind} that $\epsilon_{\Lambda_i}^{-}
  r_{\Lambda_i} \CC W_{\Lambda_i} \cong \epsilon_{\Lambda_i}^{-} \zeta_{c_i}
  \CC W_{\Lambda_i} \cong \zeta_{d_i} \CC W_{\Lambda_i}= f_i \CC
  W_{\Lambda_i}$. Thus
  \begin{equation}\label{eq:r=f}
    \epsilon_{\Lambda_i}^{\pm} r_{\Lambda_i} \CC W_{\Lambda_i}
    \cong f_i \CC W_{\Lambda_i}  
  \end{equation}
  for all $i\in [a+b]$.

  To complete the proof we use \eqref{eq:r=f} to compute
  \begin{align*}
    e_\lambda \mathbb{C}W_n 
    &= x_{|\lambda|}\, \epsilon_{\Lambda_1}^{+} r_{\Lambda_1} \dotsm
      \epsilon_{\Lambda_{a+b}}^{-} r_{\Lambda_{a+b}}
      \mathbb{C}W_n \\ 
    &= \xlp \big(\epsilon_{\Lambda_1}^{+} r_{\Lambda_1} 
      \mathbb{C} W_{\Lambda_1}\big) \cdots
      \big( \epsilon_{\Lambda_{a+b}}^{-} r_{\Lambda_{a+b}}
      \mathbb{C} W_{\Lambda_{a+b}}\big) \cdot
      \mathbb{C}W_n\\  
    &\cong \xlp \big(f_1 \mathbb{C} W_{\Lambda_1}\big) \cdots
      \big(f_{a+b} \mathbb{C} W_{\Lambda_{a+b}}\big) \cdot
      \mathbb{C}W_n  \\
    &= \elt \mathbb{C}W_n.
  \end{align*}
\end{proof}

The last isomorphism in \cref{subsec:end}\cref{eq:eelt1} follows from the
next lemma.

\begin{lemma}
  The multiplication map $\mathbb{C} \elt \otimes_{Z_{W_n}(w_\lambda)}
  \mathbb{C}W_n \rightarrow \elt \mathbb{C}W_n$ is an isomorphism of right
  $\CC W_n$-modules.
\end{lemma}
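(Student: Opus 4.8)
The plan is to exhibit the natural multiplication map $\mu\colon \CC\elt\otimes_{Z_{W_n}(w_\lambda)}\CC W_n\to \elt\,\CC W_n$, $v\otimes w\mapsto vw$, and to check it is a well-defined isomorphism of right $\CC W_n$-modules. Well-definedness over $Z=Z_{W_n}(w_\lambda)$ is exactly the content of \cref{thm:main}(1): that computation shows $Z$ acts on $\elt$ on the right by scalars, i.e. $\CC\elt$ is a genuine one-dimensional right $\CC Z$-module, so the balanced-product tensor is legitimate and $\mu$ is a map of right $\CC W_n$-modules. Surjectivity is immediate since the image contains $\elt\cdot w$ for every $w\in W_n$, which spans $\elt\,\CC W_n$.

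The real work is injectivity, and the standard device is a dimension count. Choose a transversal $t_1,\dots,t_r$ of $Z$ in $W_n$, where $r=[W_n:Z]$; then $\CC\elt\otimes_{Z}\CC W_n$ is spanned by the $r$ elements $\elt\otimes t_j$, so $\dim\bigl(\CC\elt\otimes_{Z}\CC W_n\bigr)\le r$. On the other hand, combining \cref{lem:El=el} and \cref{lem:eltcongel} we get $\elt\,\CC W_n\cong e_\lambda\,\CC W_n = E_\lambda\,\CC W_n$, and $E_\lambda$ is an idempotent with $\theta_n(E_\lambda)=u_{\overleftarrow{\lambda'}}$ by \cref{thm:bonnperm} (or, at this point, one may simply use that $E_\lambda$ is a primitive idempotent in $\Sigma(W_n)$ whose associated simple module is the one indexed by the conjugacy class of $w_\lambda$). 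Since $\theta_n(E_\lambda)$ is the primitive idempotent of $\cfC(W_n)$ attached to the class of $w_\lambda$, the right $\CC W_n$-module $E_\lambda\,\CC W_n$ is (isomorphic to) the isotypic component, inside the regular representation, of the unique irreducible character $\chi$ appearing in $\Ind_{Z}^{W_n}(\varphi_\lambda)$; a character computation via \cref{lem:idemchar} applied to $E_\lambda$ identifies $\dim E_\lambda\,\CC W_n$ with $\langle \Ind_Z^{W_n}\varphi_\lambda,\,\chi\rangle\cdot\chi(1)$. The cleaner route, which I would take, is to observe that surjectivity of $\mu$ already forces $\dim \elt\,\CC W_n\le \dim\bigl(\CC\elt\otimes_Z\CC W_n\bigr)\le r = \dim \Ind_Z^{W_n}(\CC\elt)$, while Frobenius reciprocity together with the fact (from \cref{thm:main}(1)) that $\varphi_\lambda$ is linear and with \cref{lem:El=el}--\cref{lem:eltcongel} gives the reverse inequality $\dim\Ind_Z^{W_n}(\CC\elt)\le \dim \elt\,\CC W_n$; hence all three dimensions coincide and the surjection $\mu$ is an isomorphism.

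Concretely, the lower bound $\dim\elt\,\CC W_n\ge r$ comes from producing $r$ linearly independent elements in the image. One way: pick the transversal so that the $t_j$ are chosen compatibly with a decomposition of $W_n$ into double cosets relative to the signed Young subgroup $W_{|\lambda|}$, and use the explicit form $\elt=\xlp f_1\cdots f_{a+b}$ together with the fact that $\xlp=\sum_{w\in X_{|\lambda|}}w$ is a sum over a set of coset representatives; then the supports of the various $\elt\,t_j$ (as elements of $\CC W_n$) can be separated by tracking the image of the block structure, exactly as in the support arguments used in \cref{sec:idem}. Alternatively, and more in keeping with the surrounding text, invoke \cref{lem:El=el} and \cref{lem:eltcongel} to replace $\elt\,\CC W_n$ by $E_\lambda\,\CC W_n$, apply $\theta_n$ and \cref{thm:bonnperm} to pin down which isotypic component this is, and read off its dimension as $m_\chi\cdot\chi(1)$ where $m_\chi=\langle\Res^{W_n}_Z\chi,\varphi_\lambda\rangle=\langle\chi,\Ind_Z^{W_n}\varphi_\lambda\rangle$; since $\Ind_Z^{W_n}(\varphi_\lambda)$ has degree $r$ and is (by the two-sided bound above) $m_\chi$ copies of $\chi$, this degree equals $m_\chi\chi(1)=\dim E_\lambda\,\CC W_n=\dim\elt\,\CC W_n$.

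I expect the main obstacle to be the lower dimension bound, i.e. ruling out collapse in the induced module. The cheapest honest path is the two-inequality squeeze: surjectivity of $\mu$ gives $\dim\elt\,\CC W_n\le r$; Frobenius reciprocity applied to the surjection of $\CC W_n$-modules $\Ind_Z^{W_n}(\CC\elt)\twoheadrightarrow\elt\,\CC W_n$ (which $\mu$ provides) would be circular, so instead one notes that any nonzero right $\CC W_n$-module $M$ generated by a $Z$-eigenvector $v$ with $vZ\subseteq\CC v$ is a quotient of $\Ind_Z^{W_n}(\CC v)$, and $\dim M\ge$ the degree of the (unique, by the primitivity of $E_\lambda$ established earlier) irreducible constituent of $\Ind_Z^{W_n}(\CC v)$ times its multiplicity — and that number is precisely $r$ because $\Ind_Z^{W_n}(\varphi_\lambda)$ is isotypic. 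The isotypy is where \cref{thm:bonnperm} (equivalently, Vazirani's result that the $E_\lambda$ are primitive and that $\theta_n(E_\lambda)=u_{\overleftarrow{\lambda'}}$) does the essential work. Once the dimensions match, a surjective linear map between equidimensional spaces is an isomorphism, completing the proof.
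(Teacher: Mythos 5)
Your setup and the upper bound are correct and agree with the paper: the multiplication map is well defined because part (1) of \cref{thm:main} makes $\CC\elt$ a genuine right $\CC Z_{W_n}(w_\lambda)$-module, it is clearly surjective and $\CC W_n$-linear, and surjectivity gives $\dim \elt\,\CC W_n \le |W_n|/|Z_{W_n}(w_\lambda)|$. The gap is in the reverse inequality, and the route you lean on for it does not work. You assert that $E_\lambda\CC W_n$ is the isotypic component of a single irreducible $\chi$ in the regular representation and that $\Ind_{Z_{W_n}(w_\lambda)}^{W_n}(\varphi_\lambda)$ is isotypic, deducing this from the fact that $\theta_n(E_\lambda)=u_{\overleftarrow{\lambda'}}$ is a primitive idempotent of $\cfC(W_n)$. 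But the primitive idempotents of $\cfC(W_n)$ are indexed by conjugacy classes, not by irreducible characters, and $\theta_n$ lands in class functions, not in (the center of) $\CC W_n$; primitivity of $E_\lambda$ in $\Sigma(W_n)$ says nothing about $E_\lambda\CC W_n$ being isotypic as a $\CC W_n$-module. Indeed the claim is false: already for $S_n$ the analogous module $\Ind_{\langle c\rangle}^{S_n}$ of a faithful linear character of an $n$-cycle (the Lie module) contains most irreducibles with assorted multiplicities. Even granting isotypy, your ``quotient of an isotypic module'' step would only give a lower bound of $\chi(1)$, not $m_\chi\chi(1)$, since $\chi^{\oplus m}$ has nonzero quotients of every dimension $j\chi(1)$ with $1\le j\le m$. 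The support-separation alternative you mention is only a sketch and is not carried out.

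The paper closes the gap with a global count rather than a class-by-class one. By \cref{lem:El=el} and \cref{lem:eltcongel}, $\dim E_\mu\CC W_n=\dim\widetilde e_\mu\CC W_n\le |W_n|/|Z_{W_n}(w_\mu)|$ for every signed partition $\mu$, each inequality coming from the surjectivity argument you already have. Summing over $\mu$, using the decomposition $\CC W_n=\bigoplus_\mu E_\mu\CC W_n$ of \cref{subsec:eqdecomp}\eqref{eq:decomp} on the left and the class equation $\sum_\mu |W_n|/|Z_{W_n}(w_\mu)|=|W_n|$ on the right, forces every one of these inequalities to be an equality, and in particular $\dim\elt\,\CC W_n=|W_n|/|Z_{W_n}(w_\lambda)|$, so the surjection is an isomorphism. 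If you replace your isotypy argument with this summation, the rest of your write-up stands.
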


\begin{proof}
  The mapping is obviously $\CC W_n$-linear and surjective, so
  \begin{equation}\label{eq:1a}
    \dim \left( \elt \mathbb{C}W_n \right) \leq
    \dim \left(\mathbb{C} \elt \otimes_{Z_{W_n}(w_\lambda)}
      \mathbb{C}W_n\right) = |W_n|/ |Z_{W_n}(w_\lambda)|.
  \end{equation}

  Now, using the decomposition \cref{subsec:eqdecomp}\cref{eq:decomp} and
  the isomorphism $E_\mu \CC W_n \cong \widetilde e_\mu \CC W_n$ from
  \cref{lem:El=el} and \cref{lem:eltcongel}, we have
  \begin{multline*}
    \dim \mathbb{C}W_n = \sum_{\mu\in \CalS\CalP(n)} \dim E_\mu
    \mathbb{C}W_n = \sum_{\mu\in \CalS\CalP(n)} \dim \widetilde e_\mu
    \mathbb{C}W_n \\ \leq \sum_{\mu\in \CalS\CalP(n)} |W_n|/
    |Z_{W_n}(w_\mu)| = |W_n|,
  \end{multline*}
  and so it follows from \eqref{eq:1a} that $\dim \widetilde e_\mu
  \mathbb{C}W_n = |W_n|/ |Z_{W_n}(w_\mu)|$ for $\mu\in \CalS
  \CalP(n)$. Therefore $\dim \elt \mathbb{C}W_n = \dim \left( \mathbb{C}
    \elt \otimes_{\mathbb{C} Z_{W_n}(w_\lambda)} \mathbb{C}W_n \right)$, and
  it follows that the multiplication map in the statement of the lemma is an
  isomorphism as claimed.
\end{proof}

\section{Computing \texorpdfstring{$\theta_n(E_\lambda)$}
  {tEl}} \label{sec:theta}

Recall from \cref{subsec:bonn} that $\cfC(W_n)$ denotes the algebra of
$\CC$-valued class functions on $W_n$, and that for $\mu \in \CalS\CalP(n)$,
$u_\mu$ is the characteristic function for $\ccl(w_{\mu})$. Recall also the
surjective algebra homomorphism $\theta_n\colon \Sigma(W_n) \rightarrow
\cfC(W_n)$ with kernel equal to the Jacobson radical of $\Sigma(W_n)$. In
this section we prove \cref{thm:bonnperm}:
\begin{quotation}
  \emph{Suppose $\lambda\in \CalS\CalP(n)$.  Then $\theta_n(E_\lambda)=
    u_{\overleftarrow{\lambda'}}$, where $\lambda'$ is the signed
    composition of $n$ defined by
    \[
    \lambda_i' = \begin{cases} \lambda_i &\text{if $\lambda_i$ is odd} \\
      \overline{\lambda_i} &\text{if $\lambda_i$ is even.}
    \end{cases}
    \]
  }
\end{quotation}

The proof requires several preliminary results. To begin, as observed in
\cref{subsec:bonn}, $\{\, \theta_n(E_\mu) \mid \mu \in \CalS\CalP(n)\, \} =
\{\, u_\mu\mid \mu \in \CalS\CalP(n)\, \}$ is the basis of $\cfC(W_n)$
formed by the primitive idempotents because $\theta_n$ identifies
$\cfC(W_n)$ with the semisimple quotient of $\Sigma(W_n)$ by its Jacobson
radical and $\cfC(W_n)$ is a commutative algebra. Thus,
$\theta_n(E_\lambda)= u_{\mu}$ for some $\mu$. The first reduction is to
replace $E_\lambda$ by $e_\lambda$.

\begin{lemma}\label{lem:61}
  With the preceding notation, $\theta_n(E_\lambda) = |\Stab
  (\lambda)|\inverse \theta_n(e_\lambda)$.
\end{lemma}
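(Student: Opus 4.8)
Recall that $E_\lambda = \frac{1}{(a+b)!}\sum_{\overleftarrow p = \lambda} e_p$, where the sum is over all signed compositions $p$ of $n$ that rearrange to $\lambda$. The plan is to show that $\theta_n(e_p)$ is independent of the choice of such $p$, so that $\theta_n(E_\lambda) = \frac{1}{(a+b)!}\cdot N \cdot \theta_n(e_\lambda)$, where $N$ is the number of signed compositions rearranging to $\lambda$. Since the symmetric group $S_{a+b}$ acts transitively on this set with stabilizer $\Stab(\lambda)$, we have $N = (a+b)!/|\Stab(\lambda)|$, which gives exactly $\theta_n(E_\lambda) = |\Stab(\lambda)|^{-1}\theta_n(e_\lambda)$.

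So the substance is: for any signed composition $p$ with $\overleftarrow p = \lambda$, show $\theta_n(e_p) = \theta_n(e_\lambda)$. First I would observe that $e_p$ and $e_\lambda$ are both quasi-idempotents lying in $\Sigma(W_n)$, and by \cref{cor:ep2} they satisfy $e_\lambda e_p = |\Stab(p)|\,e_p = |\Stab(\lambda)|\,e_p$ and $e_p e_\lambda = |\Stab(\lambda)|\,e_\lambda$ (using $\overleftarrow p = \overleftarrow\lambda = \lambda$ and $|\Stab(p)| = |\Stab(\lambda)|$). Hence $\frac{1}{|\Stab(\lambda)|}e_p$ and $\frac{1}{|\Stab(\lambda)|}e_\lambda$ are idempotents in $\Sigma(W_n)$ satisfying the hypotheses of \cref{lem:idemef=e}(2), so they generate isomorphic right ideals, and in particular they are conjugate idempotents in $\Sigma(W_n)$. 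Now apply the algebra homomorphism $\theta_n$: conjugate idempotents map to conjugate idempotents in $\cfC(W_n)$. But $\cfC(W_n)$ is a commutative algebra, so conjugate idempotents there are equal. Therefore $\theta_n(e_p)/|\Stab(\lambda)| = \theta_n(e_\lambda)/|\Stab(\lambda)|$, i.e. $\theta_n(e_p) = \theta_n(e_\lambda)$, as desired. Summing over all $p$ with $\overleftarrow p = \lambda$ and dividing by $(a+b)!$ then yields the claim.

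I expect the only mild subtlety to be bookkeeping: making sure that $|\Stab(p)| = |\Stab(\lambda)|$ (immediate, since $\Stab(p)$ and $\Stab(\lambda)$ are conjugate subgroups of $S_{a+b}$, as $p$ and $\lambda$ lie in the same $S_{a+b}$-orbit), and that the count of signed compositions rearranging to $\lambda$ is $(a+b)!/|\Stab(\lambda)|$ by orbit–stabilizer. An alternative, slightly more hands-on route avoiding \cref{lem:idemef=e} is to note directly that $\theta_n(e_p) = \frac{1}{|\Stab(\lambda)|}\theta_n(e_p)\theta_n(e_\lambda)$ and $\theta_n(e_\lambda) = \frac{1}{|\Stab(\lambda)|}\theta_n(e_\lambda)\theta_n(e_p)$ from \cref{cor:ep2}, and then use commutativity of $\cfC(W_n)$ to conclude $\theta_n(e_p) = \theta_n(e_\lambda)$ in one line; I would probably present this shorter version. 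There is no real obstacle here — the main work of identifying exactly which primitive idempotent $u_{\overleftarrow{\lambda'}}$ equals $\theta_n(E_\lambda)$ is deferred to the subsequent lemmas.
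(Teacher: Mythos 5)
Your proof is correct and rests on the same two ingredients as the paper's: \cref{cor:ep2} together with the fact that $\theta_n$ is an algebra homomorphism into the commutative algebra $\cfC(W_n)$. The paper applies this directly to the pair $E_\lambda$, $e_\lambda$ (computing $\theta_n(e_\lambda)=\theta_n(E_\lambda e_\lambda)=\theta_n(e_\lambda E_\lambda)=|\Stab(\lambda)|\,\theta_n(E_\lambda)$ in one line), whereas you apply it pairwise to $e_p$, $e_\lambda$ and then sum over $p$ — a purely organizational difference; your shorter ``alternative'' at the end is essentially the paper's argument, and is preferable to the detour through conjugacy of idempotents, which is not automatic in the non-semisimple algebra $\Sigma(W_n)$.
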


\begin{proof}
  It follows from the definition of $E_\lambda$ in \cref{subsec:eqdecomp},
  and \cref{cor:ep2}, that $e_\lambda =E_\lambda e_\lambda$ and $e_\lambda
  E_\lambda = |\Stab (\lambda)|\, E_\lambda$. Therefore,
  \[
  \theta_n(e_\lambda) = \theta_n(E_\lambda e_\lambda) = \theta_n(E_\lambda)
  \theta_n(e_\lambda) = \theta_n(e_\lambda) \theta_n(E_\lambda) =
  \theta_n(e_\lambda E_\lambda) = |\Stab (\lambda)|\, \theta_n(E_\lambda).
  \]
\end{proof}

\subsection{}
If $\chi\in \cfC(W_n)$, then $\chi = \sum_{\mu\in \CalS\CalP(n)}
|Z_{W_n}(w_\mu)| \la \chi, u_{\mu} \ra_{W_n} \cdot u_\mu$, where
$\la\,\cdot\, , \,\cdot\, \ra_{W_n}$ is the usual inner product on
$\cfC(W_n)$, so to prove the theorem it is enough to compute $\la
\theta_n(e_\lambda), u_{\mu} \ra_{W_n}$ for $\mu\in \CalS\CalP(n)$.  More
generally, in \cref{prop:tepmu} we give an explicit formula for $\la
\theta_n(e_p), u_{\mu} \ra_{W_n}$. The first step is to give a formula for
$\theta_m( \epsilon_m^\pm r_m)$.

\begin{prop}\label{prop:rm} 
  Let $m$ be a positive integer. Then
  \begin{gather}
    \label{eq:gata}
    \theta_m(r_m)= u_{(m)} + u_{(\overline m)}, \\
    \intertext{and} \theta_m( \epsilon_m^+ r_m) = \begin{cases} u_{(m)}
      &\text{if $m$ is odd} \\
      u_{(\overline m)} &\text{if $m$ is even} \end{cases}
    \quad\text{and}\quad \theta_m( \epsilon_m^- r_m) = \begin{cases} u_{(m)}
      &\text{if $m$ is even} \\
      u_{(\overline m)} &\text{is $m$ is odd.} \end{cases} \label{eq:gatb}
  \end{gather}
\end{prop}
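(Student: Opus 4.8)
The plan is to prove \eqref{eq:gata} by a direct computation inside $\Sigma(W_m)$ and then to deduce \eqref{eq:gatb} from it.

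By \cref{cor:ep2} and the construction of the primitive idempotents in \cref{subsec:eqdecomp}, $\epsilon_m^+r_m=E_{(m)}$ and $\epsilon_m^-r_m=E_{(\overline m)}$ are two distinct primitive orthogonal idempotents of $\Sigma(W_m)$ with $\epsilon_m^+r_m+\epsilon_m^-r_m=r_m$ (centrality of $w_{0,m}$ makes $\epsilon_m^{\pm}$ orthogonal idempotents commuting with $r_m$). Since $\theta_m$ is an algebra homomorphism whose kernel, the Jacobson radical, is a nil ideal and hence contains no nonzero idempotent, $\theta_m(r_m)$ is a sum $u_\mu+u_\nu$ of two distinct primitive idempotents of $\cfC(W_m)$, and the task is to identify $\{\mu,\nu\}$. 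I would do this by computing the class function $\theta_m(r_m)$ explicitly: use \cref{lem:vazlieidem} to write $r_m=\sum_{p\in\CalC(m)}\frac{(-1)^{k(p)-1}}{k(p)}x_p^v$, convert from the basis $\{x_p^v\}$ to the basis $\{x_q\}$ of $\Sigma(W_m)$ via the (unitriangular) change of basis between them, and then apply $\theta_m(x_q)=\Ind_{W_q}^{W_m}(1_{W_q})$; evaluated on the conjugacy class of $w_\mu$, this turns $\theta_m(r_m)(w_\mu)$ into an alternating sum of permutation-character values $\Ind_{W_q}^{W_m}(1)(w_\mu)$, each of which counts fixed cosets and so is transparently combinatorial. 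A M\"obius/Witt-type cancellation should then collapse the sum to $1$ exactly when $\mu\in\{(m),(\overline m)\}$ and to $0$ otherwise, giving \eqref{eq:gata}. (Alternatively, one may observe that $r_m$ lies in the subalgebra $\Sigma(W_m)\cap\CC S_m$, which coincides inside $\CC S_m$ with Solomon's descent algebra $\Sigma(S_m)$, and that restriction of class functions $\cfC(W_m)\to\cfC(S_m)$ carries $\theta_m$ on that subalgebra to the Solomon homomorphism $\theta^{S_m}$; since classically $\theta^{S_m}(r_m)$ is the primitive idempotent of $\cfC(S_m)$ attached to the class of $m$-cycles, the $S_m$-restriction of $\theta_m(r_m)$ is this idempotent, which forces $\theta_m(r_m)=u_{(m)}+u_\nu$ with the class of $w_\nu$ disjoint from $S_m$; a dimension count — using the splitting of $r_m\,\CC W_m$ under the central involution $w_{0,m}$ — then gives $\nu=(\overline m)$.)

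Granting \eqref{eq:gata}, I would deduce \eqref{eq:gatb} as follows. The idempotents $\theta_m(\epsilon_m^+r_m)$ and $\theta_m(\epsilon_m^-r_m)$ are distinct, nonzero, orthogonal, and sum to $\theta_m(r_m)=u_{(m)}+u_{(\overline m)}$, so $\{\theta_m(\epsilon_m^+r_m),\theta_m(\epsilon_m^-r_m)\}=\{u_{(m)},u_{(\overline m)}\}$. To decide which is which, evaluate on a positive $m$-cycle $w_{(m)}$: since $w_{0,m}r_m=\epsilon_m^+r_m-\epsilon_m^-r_m$ lies in $\Sigma(W_m)$, multiplicativity of $\theta_m$ gives $\theta_m(\epsilon_m^+r_m)=\tfrac12\,\theta_m(r_m)\bigl(1+\theta_m(w_{0,m})\bigr)$, and $\theta_m(r_m)(w_{(m)})=1$ by \eqref{eq:gata}, whence $\theta_m(\epsilon_m^+r_m)(w_{(m)})=\tfrac12\bigl(1+\theta_m(w_{0,m})(w_{(m)})\bigr)$. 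Now $w_{0,m}=\epsilon_m^+-\epsilon_m^-\in\Sigma(W_m)$; expanding it in the basis $\{x_q\}$ and applying $\theta_m(x_q)=\Ind_{W_q}^{W_m}(1_{W_q})$ identifies $\theta_m(w_{0,m})$ with the determinant character of $W_m$, so $\theta_m(w_{0,m})(w_{(m)})=(-1)^{m-1}$. Therefore $\theta_m(\epsilon_m^+r_m)(w_{(m)})=\tfrac12(1+(-1)^{m-1})$, which equals $1$ when $m$ is odd and $0$ when $m$ is even; since $u_{(m)}(w_{(m)})=1$ and $u_{(\overline m)}(w_{(m)})=0$, this forces $\theta_m(\epsilon_m^+r_m)=u_{(m)}$ for $m$ odd and $u_{(\overline m)}$ for $m$ even, with $\theta_m(\epsilon_m^-r_m)$ the complementary one — which is \eqref{eq:gatb}.

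The step I expect to be the real obstacle is the change-of-basis bookkeeping needed for \eqref{eq:gata}: controlling the alternating sum of induced trivial characters and pinning down precisely the classes on which $\theta_m(r_m)$ is nonzero (or, in the alternative approach, verifying the compatibility with $\theta^{S_m}$ and executing the centralizer/dimension argument). By comparison, identifying $\theta_m(w_{0,m})$ with the determinant character and deducing \eqref{eq:gatb} from \eqref{eq:gata} are routine.
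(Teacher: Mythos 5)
Your reduction of \eqref{eq:gatb} to \eqref{eq:gata} is sound and is essentially the paper's argument: the paper likewise writes $\theta_m(\epsilon_m^\pm r_m)=\tfrac12(1\pm\varepsilon_m)\,\theta_m(r_m)$ using $\theta_m(w_{0,m})=\varepsilon_m$ (cited from Bonnaf\'e--Hohlweg, Example 3.5) and then evaluates sign-character values on the classes $(m)$ and $(\overline m)$. Your identification of $\epsilon_m^\pm r_m$ with the primitive idempotents $E_{(m)}$, $E_{(\overline m)}$, so that $\theta_m(r_m)$ must be a sum of two distinct primitive idempotents of $\cfC(W_m)$, is also correct and consistent with \cref{subsec:bonn}.

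The gap is in \eqref{eq:gata} itself, i.e.\ in identifying \emph{which} two primitive idempotents occur. Your primary route (expand $r_m$ via \cref{lem:vazlieidem}, change basis to the $x_q$, apply $\theta_m$, and hope for a M\"obius/Witt cancellation) is only announced, not executed, and you yourself flag it as the obstacle. Your fallback route via restriction to $S_m$ does not close the gap either: knowing that $\Res^{W_m}_{S_m}\theta_m(r_m)=u^{S_m}_{(m)}$ only controls the values of $\theta_m(r_m)$ on conjugacy classes that meet $S_m$ (those with all parts positive); it says nothing about classes with negative parts, so it yields $\theta_m(r_m)=u_{(m)}+u_\nu$ with $\nu$ having some negative part, and no more. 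The proposed ``dimension count using the splitting of $r_m\CC W_m$ under $w_{0,m}$'' cannot finish the job: $\theta_m$ factors through the quotient by the Jacobson radical, so the dimension of a right ideal $F\CC W_m$ is not determined by, and does not determine, which primitive idempotent $u_\nu$ the class of $F$ maps to. (Even the refinement of your \eqref{eq:gatb} argument only pins down $\varepsilon_m(w_\nu)=(-1)^m$, which many classes satisfy.) The paper closes exactly this gap with a stronger input: by \cite[(3.4)]{bonnafehohlweg:generalized}, on the unsigned part of the Mantaci--Reutenauer algebra $\theta_m$ is the \emph{inflation} of the Solomon homomorphism $\theta_{S_m}$ along the projection $\pi\colon W_m\to S_m$ with kernel $T=\la t_1,\dots,t_m\ra$, so $\theta_m(r_m)=u^{S_m}_{(m)}\circ\pi$. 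This determines $\theta_m(r_m)$ on \emph{every} class of $W_m$: it equals $1$ precisely on elements $wt$ with $w$ an $m$-cycle and $t\in T$, i.e.\ on signed cycle types $(m)$ and $(\overline m)$, and $0$ elsewhere. You need this inflation statement (or an equivalent computation) to legitimately conclude $\nu=(\overline m)$.
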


\begin{proof} 
  Recall that $r_m$ lies in the descent algebra of $S_m$ and that there is a
  surjective algebra homomorphism $\theta_{S_m}$ from the descent algebra of
  $S_m$ to the algebra of class functions $\cfC(S_m)$. It follows from the
  results in \cite[\S3]{garsiareutenauer:decomposition} that
  $\theta_{S_m}(r_m) = u_{(m)}^{S_m}$, where $u_{(m)}^{S_m}$ is the
  characteristic function of the conjugacy class of $m$-cycles in $S_m$.

  By \cite[(3.4)] {bonnafehohlweg:generalized}, $\theta_m(r_m)$ is the lift
  to $\cfC(W_m)$ of $u_{(m)}^{S_m}$, so if $\pi\colon W_m\to S_m$ is the
  projection with kernel equal to $T= \la t_1, \dots, t_m \ra$, then
  $\theta_m(r_m) = u_{(m)}^{S_m} \circ \pi$. For $w\in S_m$ and $t\in T$,
  $\pi(wt)=w$, so $u_{(m)}^{S_m} \circ \pi (wt) =0$ unless $w$ is an
  $m$-cycle, in which case $u_{(m)}^{S_m} \circ \pi (wt) =1$. On the other
  hand, if $w$ is an $m$-cycle and $t\in T$, then $wt$ has signed cycle type
  $(m)$ or $(\overline m)$. It follows that $\theta_m(r_m)= u_{(m)} +
  u_{(\overline m)}$.

  Now, let $\varepsilon_m$ denote the sign character of $W_m$. By
  \cite[Example 3.5]{bonnafehohlweg:generalized}, $\theta_m (w_{0,m}) =
  \varepsilon_m$.  Then, using \eqref{eq:gata} and the fact that $\theta_m$
  is an algebra homomorphism, we have that
  \[
  \theta_m(\epsilon_m^\pm r_m) = (1/2)(\id \pm \varepsilon_m) ( u_{(m)} +
  u_{(\overline m)}) = (1/2)( u_{(m)} + u_{(\overline m)} \pm \varepsilon_m
  u_{(m)} \pm \varepsilon_m u_{(\overline m)}).
  \]
  
  One checks that
  \[
  \varepsilon_m u_{(m)} =
  \begin{cases}
  u_{(m)} & \text{if $m$ is odd} \\
  -u_{(m)} & \text{if $m$ is even}
  \end{cases}
  \quad\text{and} \quad \varepsilon_m u_{(\overline m)} =
  \begin{cases}
    u_{(\overline m)} & \text{if $m$ is even} \\
    -u_{(\overline m)} & \text{if $m$ is odd.}
  \end{cases}
  \]
  Hence, if $m$ is odd, then
  \begin{equation}
    \label{eq:evep}
    \theta_m (\epsilon_m^\pm r_m) = (1/2) ( u_{(m)} + u_{(\overline m)} \pm
    u_{(m)} \mp u_{(\overline m)}) =
    \begin{cases}
      u_{(m)}&\text{for $\epsilon_m^+$} \\ u_{(\overline m)} &\text{for
        $\epsilon_m^-$,}
    \end{cases}
  \end{equation}
  and if $m$ is even, then
  \begin{equation}
    \label{eq:odep}
    \theta_m (\epsilon_m^\pm r_m) = (1/2)( u_{(m)} + u_{(\overline m)} \mp
    u_{(m)} \pm u_{(\overline m)}) =
    \begin{cases}
      u_{(m)}&\text{for $\epsilon_m^-$} \\ u_{(\overline m)} &\text{for
        $\epsilon_m^+$.}
    \end{cases}
  \end{equation}
  The formulas in \eqref{eq:gatb} follow from \eqref{eq:evep} and
  \eqref{eq:odep}.
\end{proof}

\subsection{}
In this subsection and the next, $p=(p_1, \dots, p_k)$ denotes a fixed
signed composition of $n$.  Our goal is to compute $\la \theta_n(e_p),
u_{\mu} \ra_{W_n}$ for $\mu\in \CalS\CalP(n)$. Recall that $e_p= x_{|p|}\,
\epsilon_{P_1}^{\xi_1} r_{P_1} \dotsm \epsilon_{P_{k}}^{\xi_{k}} r_{P_{k}}$,
where $\xi_i$ is the sign of $p_i$,
\[
\epsilon_{P_1}^{\xi_1} r_{P_1} \dotsm \epsilon_{P_{k}}^{\xi_{k}} r_{P_{k}}
\in \CC W_{|p|}, \qquad W_{|p|}=W_{P_1} \dotsm W_{P_k} \cong W_{|p_1|}
\times \dotsm \times W_{|p_k|},
\]
and $W_{|p_i|} \cong W_{P_i}\subseteq W_n$.

For $i\in [k]$ define an isomorphism $f_i\colon W_{|p_i|} \xrightarrow{\
  \cong\ } W_{P_i}$ by
\[
( f_i ( w)) ( l) =
\begin{cases}
  \widehat p_{i-1} + w ( l - \widehat p_{i-1}) & \text{if $l \in P_i,\ w( l
    - \widehat p_{i-1})>0$} \\
  \overline{\widehat p_{i-1}} + w( l - \widehat
  p_{i-1}) & \text{if $l\in P_i,\ w(l - \widehat p_{i-1}) <0$} \\
  l&\text{otherwise}
\end{cases}
\]
for $w$ in $W_{|p_i|}$ and $l\in [p_i]$. Then $f_i (w)$ is the identity on
$[n]\setminus P_i$, and the restriction of $f_i (w)$ to $P_i$ is the
translation of $w$ from a map $[p_i]\to \pm[p_i]$ to a map $P_i\to \pm P_i$.

The embeddings $f_1$, \dots, $f_k$ define a group isomorphism
\[
f=f_1\times \dots \times f_k\colon W_{|p_1|} \times \cdots \times W_{|p_k|}
\xrightarrow{\ \cong\ } W_{|p|},
\]
and an algebra isomorphism (also denoted by $f$)
\begin{equation}
  \label{eq:gai}
  f\colon \CC W_{|p_1|} \otimes \cdots \otimes \CC W_{|p_k|}
  \xrightarrow{\ \cong\ } \CC W_{|p|}.  
\end{equation}
Because $W_{|p|}$ is the internal product $W_{P_1} \dotsm W_{P_k}$, the
isomorphism in \eqref{eq:gai} restricts to an isomorphism (still denoted by
$f$)
\[
f\colon \cfC(W_{|p_1|}) \otimes \dotsm \otimes \cfC(W_{|p_{k}|})
\xrightarrow{\ \cong\ } \cfC(W_{|p|}).
\]
Notice that 
\begin{equation*}
  \label{eq:fcfbox}
  f(\eta_1\otimes \dotsm \otimes \eta_k) = \eta_1f_1\inverse
  \boxtimes \dotsm \boxtimes \eta_k f_k\inverse\in \cfC(W_{|p|}) , 
\end{equation*}
where $(\phi_1\boxtimes \dotsm \boxtimes \phi_k) (v_1\dotsm v_k)=
\phi_1(v_1) \dotsm \phi_k(v_k)$ for $i\in [k]$, $\eta_i\in \cfC(W_{|p_i|})$,
$\phi_i\in \cfC(W_{P_i})$, and $v_i\in W_{P_i}$.

\subsection{}
For $q\in \CalS\CalC(n)$ with $W_q\subseteq W_{|p|}$, set $X_q^{|p|} = X_q
\cap W_{|p|}$, and let $x_q^{|p|} = \sum_{w\in X_q^{|p|}} w$. Bonnaf\'e and
Hohlweg \cite[Section 3.1]{bonnafehohlweg:generalized} define
\[
\Sigma'(W_{|p|}) =\spn \{ x_q^{|p|} \mid q\in \CalS\CalC(n), \ W_q \subseteq
W_{|p|} \,\}.
\]
They show that $\{ x_q^{|p|} \mid q\in \CalS\CalC(n), \ W_q \subseteq
W_{|p|} \,\}$ is a basis of $\Sigma'(W_{|p|})$, that $\Sigma'(W_{|p|})$ is a
subalgebra of $\mathbb{C}W_n$, and that there is an algebra homomorphism
\[
\theta_{|p|}\colon \Sigma'(W_{|p|}) \to \cfC(W_{|p|})
\]
with the same properties as $\theta_n$. (More generally, Bonnaf\'e and
Hohlweg consider subalgebras $\Sigma'(W_p)$ and homomorphisms $\theta_{p}$,
and $\Sigma(W_n)$ and $\theta_n$ are defined as the special case when
$p=(n)$.)

\begin{lemma}\label{lem:bh}
  With the preceding notation,
  \[
  f\big( \Sigma(W_{|p_1|}) \otimes \dotsm \otimes \Sigma(W_{|p_k|}) \big) =
  \Sigma'(W_{|p|}),
  \]
  and the diagram
  \begin{equation}
    \label{eq:cd2}
    \vcenter{\vbox{\xymatrix{
      \Sigma(W_{|p_1|}) \otimes \cdots \otimes \Sigma(W_{|p_{k}|})
      \ar[r]^-{f}_-{\cong} \ar[d]^-{\theta_{|p_1|}\otimes \cdots \otimes 
        \theta_{|p_k|}} & \Sigma'(W_{\abs{p}}) \ar[d]^-{\theta_{\abs{p}}} \\
      \cfC(W_{\abs{p_1}}) \otimes \dotsm \otimes \cfC(W_{\abs{p_{k}}})
      \ar[r]^-{f}_-{\cong} & \cfC(W_{\abs{p}})} }}
  \end{equation}
  commutes.
\end{lemma}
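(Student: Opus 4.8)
The plan is to prove both assertions by comparing the natural bases. Recall that $\{\,x_{q_i}\mid q_i\in\CalS\CalC(|p_i|)\,\}$ is a basis of the Mantaci--Reutenauer algebra $\Sigma(W_{|p_i|})$, so the elements $x_{q_1}\otimes\dotsm\otimes x_{q_k}$ form a basis of $\Sigma(W_{|p_1|})\otimes\dotsm\otimes\Sigma(W_{|p_k|})$, while $\{\,x_q^{|p|}\mid q\in\CalS\CalC(n),\ W_q\subseteq W_{|p|}\,\}$ is a basis of $\Sigma'(W_{|p|})$ by \cite[Section 3.1]{bonnafehohlweg:generalized}. The first step is the bookkeeping observation that, because the blocks of any signed composition are consecutive intervals, $W_q\subseteq W_{|p|}$ if and only if each block of $q$ is contained in one of the blocks $P_1,\dots,P_k$ of $|p|$; hence the parts of $q$ whose blocks lie in $P_i$, shifted down by $\widehat p_{i-1}$, form a signed composition $q^{(i)}$ of $|p_i|$, and $q\mapsto(q^{(1)},\dots,q^{(k)})$ is a bijection from $\{\,q\mid W_q\subseteq W_{|p|}\,\}$ onto $\CalS\CalC(|p_1|)\times\dotsm\times\CalS\CalC(|p_k|)$. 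So the two bases are indexed by the same set, and both assertions will follow once we establish
\[
f(x_{q^{(1)}}\otimes\dotsm\otimes x_{q^{(k)}})=x_q^{|p|}
\]
for $q$ the signed composition corresponding to $(q^{(1)},\dots,q^{(k)})$.

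To prove this identity, write $f(x_{q^{(1)}}\otimes\dotsm\otimes x_{q^{(k)}})=f_1(x_{q^{(1)}})\dotsm f_k(x_{q^{(k)}})$, a product in $\CC W_n$ of elements supported on the pairwise commuting subgroups $W_{P_1},\dots,W_{P_k}$; since $f_i$ is injective, $f_i(x_{q^{(i)}})=\sum_{u\in f_i(X_{q^{(i)}})}u$, where $X_{q^{(i)}}\subseteq W_{|p_i|}$. Because $W_{|p|}=W_{P_1}\dotsm W_{P_k}$ is an internal direct product, multiplication gives an injection $f_1(X_{q^{(1)}})\times\dotsm\times f_k(X_{q^{(k)}})\hookrightarrow W_{|p|}$, so it suffices to show that its image is exactly $X_q\cap W_{|p|}$. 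Here I would invoke the combinatorial description of the distinguished coset representatives $X_q$ from \cite[Remark 2.1]{bonnafehohlweg:generalized} (the positivity and increasing-on-blocks conditions generalizing the one recalled in \cref{subsec:gridem}): these conditions are imposed one block of $q$ at a time, so for $w\in W_{|p|}$ written as $w=w_1\dotsm w_k$ with $w_i\in W_{P_i}$, the conditions attached to the blocks of $q$ inside $P_i$ involve only $w_i$, and after translating by $f_i^{-1}$ they become precisely the conditions defining $f_i^{-1}(w_i)\in X_{q^{(i)}}$. Thus $w\in X_q\cap W_{|p|}$ if and only if $w_i\in f_i(X_{q^{(i)}})$ for every $i$, which yields the displayed identity. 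Consequently $f$ restricts to a linear bijection $\Sigma(W_{|p_1|})\otimes\dotsm\otimes\Sigma(W_{|p_k|})\to\Sigma'(W_{|p|})$; since $f$ is an algebra isomorphism in \eqref{eq:gai}, this restriction is an isomorphism of algebras, which is the first assertion.

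For the commutativity of the diagram, it is enough to evaluate the two composites on a basis element $x_{q^{(1)}}\otimes\dotsm\otimes x_{q^{(k)}}$. Going across the top and then down gives $\theta_{|p|}(x_q^{|p|})=\Ind_{W_q}^{W_{|p|}}(1_{W_q})$, using the identity just proved and the definition of $\theta_{|p|}$. Going down and then across the bottom gives $\boxtimes_{i}\bigl(\Ind_{W_{q^{(i)}}}^{W_{|p_i|}}(1)\circ f_i^{-1}\bigr)$. Now $f_i$ is a group isomorphism $W_{|p_i|}\to W_{P_i}$ carrying $W_{q^{(i)}}$ onto $W_q\cap W_{P_i}$, so precomposition with $f_i^{-1}$ turns $\Ind_{W_{q^{(i)}}}^{W_{|p_i|}}(1)$ into $\Ind_{W_q\cap W_{P_i}}^{W_{P_i}}(1)$. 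Finally, $W_{|p|}=\prod_i W_{P_i}$ and $W_q=\prod_i(W_q\cap W_{P_i})$ are compatible direct product decompositions, and induction commutes with direct products, so $\boxtimes_i\Ind_{W_q\cap W_{P_i}}^{W_{P_i}}(1)=\Ind_{W_q}^{W_{|p|}}(1_{W_q})$. The two composites agree.

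The step I expect to be the main obstacle is the identity $f(x_{q^{(1)}}\otimes\dotsm\otimes x_{q^{(k)}})=x_q^{|p|}$, equivalently the identification $X_q\cap W_{|p|}=f_1(X_{q^{(1)}})\dotsm f_k(X_{q^{(k)}})$: the natural temptation is to say $f_i(X_{q^{(i)}})$ consists of minimal length coset representatives inside $W_{P_i}$, but $f_i$ is not length preserving for $i\ge 2$, so one must instead read $X_q$ off from the combinatorial characterization of its elements, which does restrict cleanly block by block. Once that identity is in hand, the rest of the argument is formal.
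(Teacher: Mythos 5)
Your proposal is correct and follows essentially the same route as the paper: both arguments reduce everything to the basis identity $f(x_{q^{(1)}}\otimes\dotsm\otimes x_{q^{(k)}})=x_q^{|p|}$, proved via the combinatorial characterization of the minimal coset representatives in \cite[Remark 2.1]{bonnafehohlweg:generalized} (the paper factors $X_q^{|p|}$ through the intermediate sets $X_{\underline q^i}^{|p|}$, which is only a presentational difference from your direct block-by-block verification), and then verify commutativity of the diagram on basis elements using that induction factors over the direct product $W_{|p|}=W_{P_1}\dotsm W_{P_k}$. Your remark that $f_i$ is not length-preserving, so one must use the combinatorial description rather than minimality of length, is exactly the point the paper's argument implicitly relies on.
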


\begin{proof}
  Suppose that $q^i$ is a signed composition of $|p_i|$ for $i\in [k]$ and
  that $q$ is the concatenation of $q^1$, \dots, $q^k$.  Then $q\in
  \CalS\CalC(n)$ and $X_{q^i} \subseteq W_{|p_i|}$. Straightforward
  computations using \cite[Remark 2.1]{bonnafehohlweg:generalized} and the
  definitions show that
  \begin{equation}
    \label{eq:xqpi}
    f_i ( X_{q^i}) = X_{\underline q^i}^{|p|},\quad\text{where} \quad
    \underline q^i = ( |p_1|, \ldots, |p_{i-1}|, q^i, |p_{i+1}|, \ldots,
    |p_k|) \in \CalS\CalC(n),
  \end{equation}
  and that
  \begin{equation}
    \label{eq:xqpt}
    X_{\underline q^1}^{|p|} \dotsm X_{\underline q^k}^{|p|} = X_q^{|p|} . 
  \end{equation}
  It follows from \eqref{eq:xqpi} and \eqref{eq:xqpt} that
  \[
  f(x_{q^1} \otimes \dotsm \otimes x_{q^k}) = x_{\underline q^1} ^{|p|}
  \dotsm x_{\underline q^k} ^{|p|} = x_q ^{|p|}.
  \]

  One checks that the rule $(q^1,\dots, q^k) \mapsto q$ defines a bijection
  \begin{equation*}
    \label{eq:xqpc}
    \CalS\CalC(|p_1|) \times \dotsm \times \CalS\CalC (|p_k|)
    \leftrightarrow \{\, q\in \CalS\CalC(n)\mid W_q\subseteq W_{|p|}\,\},
  \end{equation*}
  and so $f$ maps the basis
  \[
  \{\, x_{q^1} \otimes \dotsm \otimes x_{q^k} \mid \forall\, i\in [k],\
  q^i\in \CalS\CalC(|p_i|) \,\} \quad\text{of}\quad \Sigma(W_{|p_1|})
  \otimes \dotsm \otimes \Sigma(W_{|p_1|})
  \]
  to the basis
  \[
  \{ x_q^{|p|} \mid q\in \CalS\CalC(n), \ W_q \subseteq W_{|p|}
  \,\}\quad\text{of}\quad \Sigma'(W_{|p|}).
  \]
  Therefore 
  \[
  f(\Sigma(W_{|p_1|}) \otimes \dotsm \otimes \Sigma(W_{|p_k|}) ) =
  \Sigma'(W_{|p|})
  \]
  as claimed.

  Finally, 
  \begin{align*}
    \theta_{|p|} (f(x_{q^1} \otimes \dotsm \otimes x_{q^k}))
    &= \theta_{|p|}(x_q^{|p|}) \\
    &= \Ind_{W_q} ^{W_{|p|}}( 1_{W_q}) \\
    &= \Ind_{f_1(W_{q^1})} ^{W_{P_1}}( 1_{f_1(W_{q^1})}) \boxtimes \dotsm
      \boxtimes \Ind_{f_k(W_{q^k})} ^{W_{P_k}}( 1_{f_k(W_{q^k})}) \\
    &= \Ind_{W_{q^1}} ^{W_{|p_1|}}( 1_{W_{q^1}}) f_1\inverse \boxtimes \dotsm
      \boxtimes \Ind_{W_{q^k}} ^{W_{|p_k|}}( 1_{W_{q^k}}) f_k\inverse \\
    &= f\big( \Ind_{W_{q^1}} ^{W_{|p_1|}}( 1_{W_{q^1}}) \otimes \dotsm
      \otimes \Ind_{W_{q^k}} ^{W_{|p_k|}}( 1_{W_{q^k}})\big) \\
    &= f\big( \theta_{|p_1|} ( x_{q^1}) \otimes \dotsm \otimes
      \theta_{|p_k|}( x_{q^k}) \big) \\
    &= f\big( (\theta_{|p_1|} \otimes \dotsm \otimes \theta_{|p_k|} )(
      x_{q^1} \otimes \dotsm \otimes x_{q^k}) \big),
  \end{align*}
  and it follows that \eqref{eq:cd2} commutes.
\end{proof}

We can now give a formula for $\la \theta_n(e_p), u_{\mu} \ra_{W_n}$.

\begin{prop}\label{prop:tepmu}
  Suppose $p=(p_1, \dots, p_k)$ is a signed composition of $n$ and $\mu$ is
  a signed partition of $n$. Then
  \[
  \la \theta_n(e_p), u_{\mu} \ra_{W_n} =
  \begin{cases}
    2^{-k}|p_1\dotsm p_k|\inverse &\text{if $\mu= \overleftarrow{p'}$} \\
    0 &\text{otherwise.}
  \end{cases}
  \]
\end{prop}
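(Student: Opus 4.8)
The plan is to factor $e_p = x_{|p|}\,y$ with $y = \epsilon_{P_1}^{\xi_1}r_{P_1}\cdots\epsilon_{P_k}^{\xi_k}r_{P_k}\in\CC W_{|p|}$, to compute $\theta_{|p|}(y)$ using \cref{lem:bh} and \cref{prop:rm}, to transport the answer to $\cfC(W_n)$ via induction, and to finish by Frobenius reciprocity.

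First I would record a relative version of the homomorphism property: for $z\in\Sigma'(W_{|p|})$ one has $x_{|p|}z\in\Sigma(W_n)$ and
\[
\theta_n(x_{|p|}z)=\Ind_{W_{|p|}}^{W_n}\!\bigl(\theta_{|p|}(z)\bigr).
\]
This is clear on the Mantaci--Reutenauer basis of $\Sigma'(W_{|p|})$: if $W_q\subseteq W_{|p|}$ then the distinguished coset representatives factor as $X_q=X_{|p|}\cdot(X_q\cap W_{|p|})$ with lengths adding, so $x_q=x_{|p|}\,x_q^{|p|}$, and then transitivity of induction gives $\theta_n(x_q)=\Ind_{W_q}^{W_n}(1_{W_q})=\Ind_{W_{|p|}}^{W_n}\Ind_{W_q}^{W_{|p|}}(1_{W_q})=\Ind_{W_{|p|}}^{W_n}\theta_{|p|}(x_q^{|p|})$; the general case follows by linearity since the $x_q^{|p|}$ form a basis of $\Sigma'(W_{|p|})$. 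Since each $\epsilon_m^{\pm}r_m$ lies in $\Sigma(W_m)$, \cref{lem:bh} gives $y=f\bigl(\bigotimes_i\epsilon_{|p_i|}^{\xi_i}r_{|p_i|}\bigr)\in\Sigma'(W_{|p|})$, whence $\theta_n(e_p)=\Ind_{W_{|p|}}^{W_n}\bigl(\theta_{|p|}(y)\bigr)$.

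Next I would compute $\theta_{|p|}(y)$. By the commuting square of \cref{lem:bh},
\[
\theta_{|p|}(y)=f\Bigl(\textstyle\bigotimes_i\theta_{|p_i|}(\epsilon_{|p_i|}^{\xi_i}r_{|p_i|})\Bigr),
\]
and a short case analysis on the parity of $|p_i|$ and the sign $\xi_i$ of $p_i$ turns \cref{prop:rm} into the single statement $\theta_{|p_i|}(\epsilon_{|p_i|}^{\xi_i}r_{|p_i|})=u^{W_{|p_i|}}_{(p_i')}$, the characteristic function of the set of $(p_i')$-cycles in $W_{|p_i|}$. Using the formula for $f$ on class functions in \cref{sec:theta}, $\theta_{|p|}(y)$ is therefore the characteristic function of the subset $T_p\subseteq W_{P_1}\cdots W_{P_k}=W_{|p|}$ consisting of the elements $v=v_1\cdots v_k$ whose $i$-th component $v_i$ is a $(p_i')$-cycle supported on the block $P_i$. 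Because the blocks $P_i$ are pairwise disjoint, every element of $T_p$ has, as an element of $W_n$, signed cycle type equal to the signed partition obtained by sorting the parts of $p'=(p_1',\dots,p_k')$, namely $\overleftarrow{p'}$; in particular $T_p\subseteq\ccl_{W_n}(w_{\overleftarrow{p'}})$.

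Finally, Frobenius reciprocity gives
\[
\la\theta_n(e_p),u_{\mu}\ra_{W_n}=\bigl\la\mathbf 1_{T_p},\,\Res^{W_n}_{W_{|p|}}u_{\mu}\bigr\ra_{W_{|p|}}=\frac{\bigl|T_p\cap\ccl_{W_n}(w_{\mu})\bigr|}{|W_{|p|}|}.
\]
By the previous paragraph this is $0$ unless $\mu=\overleftarrow{p'}$, in which case $T_p\subseteq\ccl_{W_n}(w_{\mu})$ and the numerator equals $|T_p|=\prod_i\bigl|\{\,v\in W_{|p_i|}:v\text{ is a }(p_i')\text{-cycle}\,\}\bigr|$. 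A positive or negative $\ell$-cycle in $W_\ell$ has centralizer of order $2\ell$ (the single-part case of the centralizer description in \cref{subsec:cent}), so each factor is $|W_{|p_i|}|/(2|p_i|)=2^{|p_i|-1}(|p_i|-1)!$; dividing by $|W_{|p|}|=\prod_i 2^{|p_i|}|p_i|!$ gives $\prod_i(2|p_i|)^{-1}=2^{-k}|p_1\cdots p_k|^{-1}$, as claimed. The only points requiring real care are the relative homomorphism identity of the first step and the parity/sign bookkeeping identifying $\theta_{|p_i|}(\epsilon_{|p_i|}^{\xi_i}r_{|p_i|})$ with $u_{(p_i')}$; the remaining computations are routine.
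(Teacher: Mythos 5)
Your proposal is correct and follows essentially the same route as the paper: factor $e_p=x_{|p|}y$, push $\theta_{|p|}(y)$ through the commuting squares of \cref{lem:bh} and the Bonnaf\'e--Hohlweg induction square, evaluate the local factors via \cref{prop:rm}, and finish with Frobenius reciprocity. The only (cosmetic) difference is at the last step, where you count $|T_p\cap\ccl(w_\mu)|$ directly rather than decomposing $u_\mu|_{W_{|p|}}$ as a box product of the $u_{\mu^i}$; both give the same factorization $\prod_i(2|p_i|)^{-1}$.
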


\begin{proof}
  Consider the diagram
  \begin{equation}\label{eq:commdiag}
    \vcenter{\vbox{\xymatrix{
      \Sigma(W_{\abs{p_1}}) \otimes \cdots \otimes \Sigma(W_{\abs{p_{k}}})
      \ar[rr]^-{f}_-{\cong} \ar[d]^{\theta_{|p_1|}\otimes \cdots \otimes
      \theta_{|p_k|}} && \Sigma'(W_{\abs{p}}) \ar[rr]^{x_{\abs{p}}*}
      \ar[d]^{\theta_{\abs{p}}} && \Sigma(W_n) \ar[d]^{\theta_n} \\
      \cfC(W_{\abs{p_1}}) \otimes \dotsm \otimes \cfC(W_{\abs{p_{k}}})
      \ar[rr]^-{f}_-{\cong} && \cfC(W_{\abs{p}})
      \ar[rr]^{\Ind_{W_{\abs{p}}}^{W_n}}&& \cfC(W_n) ,  }}}
  \end{equation}
  where $x_{|p|}*$ denotes left multiplication by $x_{|p|}$. It was shown in
  \cref{lem:bh} that the left square commutes and it is shown in
  \cite[Section 3.2]{bonnafehohlweg:generalized} that the right square
  commutes, so \eqref{eq:commdiag} is a commutative diagram.

  Using the commutativity of the right square we have
  \begin{equation}
    \label{eq:rhs}
    \theta_n(e_p) = \theta_n( x_{|p|} \epsilon_{P_1} ^{\xi_1} r_{P_1} \dotsm
    \epsilon_{P_k} ^{\xi_k} r_{P_k}) = \Ind_{W_{|p|}}^{W_n}\left(
      \theta_{|p|} ( \epsilon_{P_1} ^{\xi_1} r_{P_1} \dotsm \epsilon_{P_k}
      ^{\xi_k} r_{P_k}) \right),  
  \end{equation}
  and using the commutativity of the left square we have
  \begin{equation}
    \label{eq:lhs}
    \begin{aligned}
      \theta_{|p|} ( \epsilon_{P_1} ^{\xi_1} r_{P_1} \dotsm \epsilon_{P_k}
      ^{\xi_k} r_{P_k}) &=\theta_{|p|} \big( f( \epsilon_{|p_1|} ^{\xi_1}
      r_{|p_1|} \otimes \dotsm
      \otimes \epsilon_{|p_k|} ^{\xi_k} r_{|p_k|}) \big) \\
      &= f \big( (\theta_{|p_1|}\otimes \cdots \otimes \theta_{|p_k|}) (
      \epsilon_{|p_1|} ^{\xi_1} r_{|p_1|} \otimes \dotsm
      \otimes \epsilon_{|p_k|} ^{\xi_k} r_{|p_k|}) \big) \\
      &= \theta_{|p_1|} ( \epsilon_{|p_1|} ^{\xi_1} r_{|p_1|}) f_1\inverse
      \boxtimes \dotsm \boxtimes \theta_{|p_k|}(\epsilon_{|p_k|} ^{\xi_k}
      r_{|p_k|}) f_k \inverse.
    \end{aligned}
  \end{equation}

  Using \eqref{eq:rhs}, Frobenius reciprocity, and \eqref{eq:lhs} gives
  \begin{align*}
    \la \theta_n(e_p), u_\mu \ra_{W_n}
    &= \la \Ind_{W_{|p|}}^{W_n} \big(
      \theta_{|p|} ( \epsilon_{P_1} ^{\xi_1} r_{P_1} \dotsm \epsilon_{P_k}
      ^{\xi_k} r_{P_k}) \big),  u_\mu \ra_{W_n} \\
    & = \la \theta_{|p|} ( \epsilon_{P_1} ^{\xi_1} r_{P_1} \dotsm
      \epsilon_{P_k} ^{\xi_k} r_{P_k}) , u_\mu|_{W_{|p|}} \ra_{W_{|p|}} \\
    & = \la \theta_{|p_1|} ( \epsilon_{|p_1|} ^{\xi_1} r_{|p_1|}) f_1\inverse
      \boxtimes \dotsm \boxtimes \theta_{|p_k|}(\epsilon_{|p_k|} ^{\xi_k}
      r_{|p_k|}) f_k \inverse, u_\mu|_{W_{|p|}} \ra_{W_{|p|}} .
  \end{align*}
  One checks that $u_\mu|_{W_{|p|}} \ne 0$ if and only if for $i\in [k]$
  there are signed partitions $\mu^i$ of $|p_i|$ such that if $q\in
  \CalS\CalC(n)$ is the concatenation of $\mu^1$, \dots, $\mu^k$, then
  $\mu=\overleftarrow q$.  Suppose that this is the case. Then
  $u_\mu|_{W_{|p|}} = u_{\mu^1}f_1\inverse \boxtimes \dotsm \boxtimes
  u_{\mu^k}f_k\inverse$. Thus
  \begin{multline*}
    \la \theta_{|p_1|} ( \epsilon_{|p_1|} ^{\xi_1} r_{|p_1|}) f_1\inverse
    \boxtimes \dotsm \boxtimes \theta_{|p_k|}(\epsilon_{|p_k|} ^{\xi_k}
    r_{|p_k|}) f_k \inverse, u_\mu|_{W_{|p|}} \ra_{W_{|p|}} \\
    = \la \theta_{|p_1|} ( \epsilon_{|p_1|} ^{\xi_1} r_{|p_1|}) f_1\inverse
    \boxtimes \dotsm \boxtimes \theta_{|p_k|}(\epsilon_{|p_k|} ^{\xi_k}
    r_{|p_k|}) f_k \inverse, u_{\mu^1}f_1\inverse \boxtimes \dotsm \boxtimes
    u_{\mu^k}f_k\inverse \ra_{W_{|p|}} \\
    = \la \theta_{|p_1|} ( \epsilon_{|p_1|} ^{\xi_1} r_{|p_1|}) f_1\inverse,
    u_{\mu^1}f_1\inverse \ra_{W_{P_1}} \dotsm \la
    \theta_{|p_k|}(\epsilon_{|p_k|} ^{\xi_k} r_{|p_k|}) f_k \inverse,
    u_{\mu^k}f_k\inverse \ra_{W_{P_k}} \\
    = \la \theta_{|p_1|} ( \epsilon_{|p_1|} ^{\xi_1} r_{|p_1|}) , u_{\mu^1}
    \ra_{W_{|p_1|}} \dotsm \la \theta_{|p_k|} ( \epsilon_{|p_k|} ^{\xi_k}
    r_{|p_k|}) , u_{\mu^k} \ra_{W_{|p_k|}},
  \end{multline*}
  and so by \cref{prop:rm},
  \begin{align*}
    \la \theta_n(e_p), u_\mu \ra_{W_n}
    & = \la \theta_{|p_1|} ( \epsilon_{|p_1|} ^{\xi_1} r_{|p_1|}) ,
      u_{\mu^1} \ra_{W_{|p_1|}} \dotsm \la \theta_{|p_k|} ( \epsilon_{|p_k|}
      ^{\xi_k} r_{|p_k|}) , u_{\mu^k} \ra_{W_{|p_k|}} \\
    &= \prod_{\substack{p_i>0\\ p_i \text{ odd}}} \la u_{(p_i)}, u_{\mu^i}
    \ra_{W_{|p_i|}}\cdot \prod_{\substack{p_i>0\\ p_i \text{ even}}} \la
    u_{(\overline{p_i})}, u_{\mu^i}  \ra_{W_{|p_i|}} \\
    &\qquad \cdot 
      \prod_{\substack{p_i<0\\ p_i \text{ even}}} \la u_{(\overline{p_i})},
    u_{\mu^i} \ra_{W_{|p_i|}}\cdot \prod_{\substack{p_i<0\\ p_i \text{
    odd}}} \la u_{(p_i)}, u_{\mu^i} \ra_{W_{|p_i|}} \\
    &= \prod_{p_i \text{ odd}} \la u_{(p_i)}, u_{\mu^i} \ra_{W_{|p_i|}}\cdot
      \prod_{p_i \text{ even}} \la u_{(\overline{p_i})}, u_{\mu^i} 
      \ra_{W_{|p_i|}} \\ 
    &= \begin{cases} 2^{-k}|p_1\dotsm p_k|\inverse & \text{if $\mu=
        \overleftarrow {p'}$} \\
      0&\text{otherwise.} \end{cases} 
  \end{align*}
\end{proof}

\subsection{}
Now suppose $\lambda=(\lambda_1, \dots, \lambda_a, \lambda_{a+1}, \dots,
\lambda_{a+b})$ is a signed partition of $n$. Then $|\Stab (\lambda)| =
|\Stab (\overleftarrow{\lambda'})|$ and with the notation in
\cref{subsec:cent}, the subgroup of $Z_{W_n}(w_\lambda)$ generated by $\{\,
y_i\mid \lambda_i= \lambda_{i+1}\,\}$ is isomorphic to $\Stab
(\lambda)$. Thus
\[
|Z_{W_n}(w_{\overleftarrow{\lambda'}})| = |\Stab (\lambda)|\, 2^{a+b}\,
|\lambda_1 \dotsm \lambda_{a+b}|
\]
and so by \cref{lem:61} and \cref{prop:tepmu},
\begin{multline*}
  \theta(E_\lambda) = |\Stab (\lambda)|\inverse \theta_n(e_\lambda) = |\Stab
  (\lambda)|\inverse \sum_{\mu\in \CalS\CalP(n)} |Z_{W_n}(w_\mu)|\, \la
  \theta_n(e_\lambda), u_{\mu} \ra_{W_n} \cdot u_\mu\\
  = |\Stab (\lambda)|\inverse |Z_{W_n}(w_{\overleftarrow{\lambda'}})|\,
  2^{-a-b}\,|\lambda_1\dotsm \lambda_{a+b}|\inverse\cdot
  u_{\overleftarrow{\lambda'}} = u_{\overleftarrow{\lambda'}},
\end{multline*}
as claimed.

\medskip

\noindent {\bf Acknowledgments:} The authors thank Nantel Bergeron, G\"otz
Pfeiffer, and Monica Vazirani for helpful discussions.


\bibliographystyle{plain}

\begin{thebibliography}{10}

\bibitem{bergeronbergerongarsia:idempotents}
F.~Bergeron, N.~Bergeron, and A.M. Garsia.
\newblock Idempotents for the free {L}ie algebra and {$q$}-enumeration.
\newblock In {\em Invariant theory and tableaux ({M}inneapolis, {MN}, 1988)},
  volume~19 of {\em IMA Vol. Math. Appl.}, pages 166--190. Springer, New York,
  1990.

\bibitem{bishopdouglasspfeifferroehrle:computationsIII}
M.~Bishop, J.M. Douglass, G.~Pfeiffer, and G.~R{\"o}hrle.
\newblock Computations for {C}oxeter arrangements and {S}olomon's descent
  algebra {III}: {G}roups of rank seven and eight.
\newblock {\em J. Algebra}, 423:1213--1232, 2015.

\bibitem{bonnafe:mantaci}
C.~Bonnaf{\'e}.
\newblock Representation theory of {M}antaci-{R}eutenauer algebras.
\newblock {\em Algebr. Represent. Theory}, 11(4):307--346, 2008.

\bibitem{bonnafehohlweg:generalized}
C.~Bonnaf{\'e} and C.~Hohlweg.
\newblock Generalized descent algebra and construction of irreducible
  characters of hyperoctahedral groups.
\newblock {\em Ann. Inst. Fourier (Grenoble)}, 56(1):131--181, 2006.
\newblock With an appendix by Pierre Baumann and Hohlweg.

\bibitem{churchfarb:representation}
T.~Church and B.~Farb.
\newblock Representation theory and homological stability.
\newblock {\em Adv. Math.}, 245:250--314, 2013.

\bibitem{curtisreiner:methodsI}
C.W. Curtis and I.~Reiner.
\newblock {\em Methods of representation theory. {V}ol. {I}}.
\newblock John Wiley \& Sons Inc., New York, 1981.
\newblock With applications to finite groups and orders, Pure and Applied
  Mathematics, A Wiley-Interscience Publication.

\bibitem{douglasspfeifferroehrle:cohomology}
J.M. Douglass, G.~Pfeiffer, and G.~R{\"o}hrle.
\newblock Cohomology of {C}oxeter arrangements and {S}olomon's descent algebra.
\newblock {\em Trans. Amer. Math. Soc.}, 366(10):5379--5407, 2014.

\bibitem{garsia:combinatorics}
A.M. Garsia.
\newblock Combinatorics of the free {L}ie algebra and the symmetric group.
\newblock In {\em Analysis, et cetera}, pages 309--382. Academic Press, Boston,
  MA, 1990.

\bibitem{garsiareutenauer:decomposition}
A.M. Garsia and C.~Reutenauer.
\newblock A decomposition of {S}olomon's descent algebra.
\newblock {\em Adv. Math.}, 77(2):189--262, 1989.

\bibitem{hanlon:action}
P.~Hanlon.
\newblock The action of {$S_n$} on the components of the {H}odge decomposition
  of {H}ochschild homology.
\newblock {\em Michigan Math. J.}, 37(1):105--124, 1990.

\bibitem{konvalinkapfeifferroever:centralizers}
M.~Konvalinka, G.~Pfeiffer, and C.~R{\"o}ver.
\newblock A note on element centralizers in finite {C}oxeter groups.
\newblock {\em J. Group Theory}, 14:727--745, 2011.

\bibitem{mantacireutenauer:generalization}
R.~Mantaci and C.~Reutenauer.
\newblock A generalization of {S}olomon's algebra for hyperoctahedral groups
  and other wreath products.
\newblock {\em Comm. Algebra}, 23(1):27--56, 1995.

\bibitem{reutenauer:free}
C.~Reutenauer.
\newblock {\em Free {L}ie algebras}, volume~7 of {\em London Mathematical
  Society Monographs, New Series}.
\newblock The Clarendon Press, Oxford University Press, New York, 1993.

\bibitem{solomon:mackey}
L.~Solomon.
\newblock A {M}ackey formula in the group ring of a {C}oxeter group.
\newblock {\em J. Algebra}, 41(2):255--264, 1976.

\bibitem{vazirani:thesis}
M.~Vazirani, 1993.
\newblock \tt{https://www.math.ucdavis.edu/~vazirani/CV/papers2011.html}.

\end{thebibliography}

\end{document}